\newcommand{\xc}[2]{ \xcancel{\color{#1}{#2}} }
\newtheorem{theorem}{Theorem}[section]
\newtheorem{lemma}{Lemma}[section]
\newtheorem{corollary}{Corollary}[section]
\newtheorem{proposition}{Proposition}[section]
\newtheorem{remark}{Remark}[section]
\numberwithin{equation}{section}
\def\R{{\mathbb{R}}}
\def\N{{\mathbb{N}}}
\def\Z{{\mathbb{Z}}}
\title{Relationships between two linearizations of the box-ball system : Kerov-Kirillov-Reshetikhin bijection and slot configuration}
\author{Matteo Mucciconi, Makiko Sasada, Tomohiro Sasamoto and Hayate Suda}
\keywords{Box-ball system, KKR bijection, slot decomposition, solitons, cellular automata, rigged partition}
\begin{document}

%\address[1]{\orgname{Department of Statistics, University of Warwick}, \orgaddress{\street{Coventry, CV4 7HP}, \postcode{152-8551}, \country{United Kingdom}}\email{matteomucciconi@gmail.com}}

%\address[1]{\orgname{} \orgaddress{\street{}, \city{}, \postcode{}, \country{}};\email{}}
%\address[2]{\orgname{Graduate School of Mathematical Sciences, University of Tokyo}, \orgaddress{\street{3-8-1 Komaba, Meguro-ku}, \city{Tokyo}, \postcode{153-8914}, \country{Japan}};\email{sasada@ms.u-tokyo.ac.jp}}
%\address[3]{\orgname{Department of Physics, Tokyo Institute of Technology}, \orgaddress{\street{2-12-1 O-okayama, Meguro-ku}, \city{Tokyo}, \postcode{152-8551}, \country{Japan}};\email{sasamoto@phys.titech.ac.jp}}
%\address[4]{\orgname{Department of Physics, Tokyo Institute of Technology}, \orgaddress{\street{2-12-1 O-okayama, Meguro-ku}, \city{Tokyo}, \postcode{152-8551}, \country{Japan}};\email{suda.h.ac@m.titech.ac.jp}}

\begin{abstract}
    The box-ball system (BBS), which was introduced by Takahashi and Satsuma in 1990, is a soliton cellular automaton.  Its dynamics can be linearized by a few methods, among which the best known is the Kerov-Kirillov-Reshetikhin (KKR) bijection using rigged partitions.  Recently a new linearization method in terms of “slot configurations” was introduced by Ferrari-Nguyen-Rolla-Wang, but its relations to existing ones have not been clarified. In this paper we investigate this issue and clarify the relation between the two linearizations. For this we introduce a novel way of describing the BBS dynamics using a carrier with seat numbers. We show that the seat number configuration also linearizes the BBS and reveals explicit relations between the KKR bijection and the slot configuration. In addition, by using these explicit relations, we also show that even in case of finite carrier capacity the BBS can be linearized via the slot configuration.
\end{abstract}

\maketitle

\section{Introduction}

We consider the box-ball system (BBS) introduced by Takahashi-Satsuma \cite{TS} and a class of its generalizations BBS$(\ell)$ introduced in \cite{TM}, which are cellular automata. The states of the system are configurations of particles (balls) on the half line $\N = \{ 1,2, \dots\}$ denoted by $\eta \in \Omega = \left\{0,1 \right\}^{\N}$, and we will assume that the site $x=0$ is always vacant (box). 
The dynamics of the BBS$(\ell)$ can be described in terms of a \emph{carrier} with capacity $\ell$. At each time step the carrier enters the system empty from the leftmost site ($x = 0$) and starts travelling to the right. It visits each site $x$ of the lattice updating its local state as follows:
\begin{itemize}
    \item if there is a ball at site $x$ and the carrier is not full, then the carrier picks up the ball; 
    \item if the site $x$ is empty and the carrier is not empty, then the carrier puts down a ball;
    \item otherwise, the carrier just {passes} through. 
\end{itemize}
The above updating rules can be summarized by recording in a function $W_{\ell} :\mathbb{Z}_{\ge 0} \to \{ 0,1,\dots, \ell\}$ the number of balls transported by the carrier as it visits each site of the lattice $\N$, i.e., we recursively define $W_{\ell} :\mathbb{Z}_{\ge 0} \to \{ 0,1,\dots, \ell\}$ as $W_{\ell}(0) = 0$ and 
\begin{equation}
    W_{\ell}(x)=W_{\ell}(x-1)+\min\{\eta(x),\ell - W_\ell(x-1)\} - \min\{1-\eta(x), W_{\ell}(x-1)\}.
\end{equation}
Then by using $W_{\ell}$, the one step time evolution of the BBS$(\ell)$ is described by the operator 
\begin{equation}
    T_{\ell} : \Omega \to \Omega
\end{equation}
which acts on states as 
\begin{equation}
    T_{\ell}\eta(x)=\eta(x)+W_{\ell}(x-1)-W_{\ell}(x).
\end{equation}
The original model, namely the BBS introduced by \cite{TS}, corresponds to the case with infinite capacity carrier, that is $\ell=\infty$. Throughout this paper, by abuse of notation, for any function $f : \Omega \to \R$ we will denote $f\left(T_{\ell}\eta\right)$ by $T_{\ell}f$ and often omit the variable $\eta$. Also, we denote $T_{\infty}$ by $T$.

The BBS has been widely studied from the viewpoint of the integrable system. In particular, the BBS can be obtained via a {\it certain discretization} of the Korteweg-de Vries equation (KdV equation)
    \begin{align}
        \partial_t u + 6u \partial_{x} u + \partial^3_{x}u = 0.
     \end{align}
As the KdV equation is known to be a soliton equation, the BBS also exhibits solitonic behavior; {indeed, this property is a consequence of the solitonic nature of a certain discretized KdV equation \cite{TTMS}.} A $k$-soliton of a given ball configuration $\eta$ is a certain substring of $\eta$ consisting of $k$ ``1"s and ``0"s. If distances of solitons are large enough, then a $k$-soliton is identified as consecutive $k$ ``1"s followed by $k$ ``0"s -- here we look at the ball configuration from left to right.  Even when the distance between some solitons is small or solitons are interacting, we can identify them precisely, via the Takahashi-Satsuma (TS) algorithm, which is recalled in Appendix A. For example, Figure \ref{fig:timeevolution} shows the time evolution of the configuration $\eta = 111000010010\dots$, which includes one $3$-soliton and two $1$-solitons, and the distance of these solitons is large enough in $\eta, T^{4}\eta$ but they interact in $T\eta, T^{2}\eta, T^{3}\eta$. In addition, it is also known that the BBS can be obtained via the zero temperature limit of {a} certain spin chain model, and the BBS inherits the symmetry of the model before taking the limit, see \cite{IKT} for details. 
Thus, despite the simple description of the dynamics, the BBS is considered an important model in mathematical physics since it has the properties of both classical and quantum integrable systems.

\begin{figure}[H]
\footnotesize
\setlength{\tabcolsep}{5pt}
\begin{center}
\renewcommand{\arraystretch}{2}
\begin{tabular}{rccccccccccccccccccccccc}
    $x$  & 1 &  2 &  3 &  4 &  5 &  6 & 7 &  8 & 9 & 10 & 11 & 12 & 13 & 14 & 15 & 16 & 17 & 18 & 19 & 20 & 21 & 22 & $\dots$ \\
    \hline\hline $\eta(x)$ & \color{red}{1} & \color{red}{1} & \color{red}{1} & \color{red}{0} & \color{red}{0} & \color{red}{0} & 0 & \color{blue}{1} & \color{blue}{0} & 0 & \color{green}{1} & \color{green}{0} & 0 & 0 & 0 & 0 & 0 & 0 & 0 & 0 & 0 & 0 & $\dots$ \\
 \hline 
 $T\eta(x)$ & 0 & 0 & 0 & \color{red}{1} & \color{red}{1} & \color{red}{1} & \color{red}{0} & \color{red}{0} & \color{blue}{1} & \color{blue}{0} & \color{red}{0} & \color{green}{1} & \color{green}{0} & 0 & 0 & 0 & 0 & 0 & 0 & 0 & 0 & 0 & $\dots$  \\
 \hline
 $T^2\eta(x)$ & 0 & 0 & 0 & 0 & 0 & 0 & \color{red}{1} & \color{red}{1} & \color{blue}{0} & \color{blue}{1} & \color{red}{1} & \color{red}{0} & \color{green}{1} & \color{green}{0} & \color{red}{0} & \color{red}{0} & 0 & 0 & 0 & 0 & 0 & 0 & $\dots$  \\
 \hline
 $T^3\eta(x)$ & 0 & 0 & 0 & 0 & 0 & 0 & 0 & 0 & \color{blue}{1} & \color{blue}{0} & 0 & \color{red}{1} & \color{green}{0} & \color{green}{1} & \color{red}{1} & \color{red}{1} & \color{red}{0} & \color{red}{0} & \color{red}{0} & 0 & 0 & 0 & $\dots$  \\
 \hline
 $T^4\eta(x)$ & 0 & 0 & 0 & 0 & 0 & 0 & 0 & 0 & 0 & \color{blue}{1} & \color{blue}{0} & 0 & \color{green}{1} & \color{green}{0} & 0 & 0 & \color{red}{1} & \color{red}{1} & \color{red}{1} & \color{red}{0}  & \color{red}{0} & \color{red}{0} & $\dots$ \\
 \hline
   \end{tabular}
\end{center}
\caption{How the ball configuration $\eta = 1110000100\dots$ evolves with time, where $T^{n}\eta$ is recursively defined as $T^{n}\eta = T \left( T^{n-1}\eta \right)$, $T^{0}\eta = \eta$. The $3$-soliton and {two} $1$-solitons identified by the TS algorithm are colored by red, blue and green, respectively.}\label{fig:timeevolution}
\end{figure}

For some classical integrable systems such as {the} KdV equation, the initial value problems are explicitly solved via the linearization of their dynamics.
The BBS{,} as well as BBS$(\ell)${,} are clearly non-linear dynamics, yet they are also known to be linearized by the Kerov-Kirillov-Reshetikhin (KKR) bijection \cite{KOSTY} using the language of rigged Young diagrams and also by a procedure called $10$-elimination \cite{MIT}. A relation between the two linearizations was studied in \cite{KS}. Recently, another linearization using the notion of the {\it slot configuration} and the {\it $k$-slots} has been introduced in \cite{FNRW}. The latter linearization is known to be useful to study the randomized BBS and its {\it generalized hydrodynamics} \cite{CS,FG,FNRW}, where generalized hydrodynamics is a relatively new theory of hydrodynamics for integrable systems, see the review \cite{D} for details. The aim of this paper is to introduce a new algorithm which also linearizes the BBS dynamics and reveals relations between the KKR bijection and the slot configuration. In a forthcoming paper \cite{S}, the relation between the $10$-elimination and the slot configuration will be considered. 

To describe the explicit relation between these linearizations, we introduce two novel ways to encode the ball configuration $\eta \in \Omega$  : 
\begin{enumerate}
    \item[(i)] In Section \ref{subseq:seat}, we introduce a {\it carrier with seat numbers} and the corresponding {\it seat number configuration} $\eta^{\sigma}_{k} \in \Omega$, for any $k \in \N$ and $\sigma\in\{ \uparrow, \downarrow\}$. We will show that the seat number configuration is a {\it sequential} generalization of the slot configuration, namely $\eta^{\sigma}_k(x)$ depends only on $(\eta(y))_{0 \le y \le x}$ but contains full information of the slot configuration, see Proposition \ref{prop:seat_slot} for details. 
    \item[(ii)] In Section \ref{sec:algo}, we introduce a new algorithm to produce a growing sequence of pairs of interlacing Young diagrams $\left(\mu^{\uparrow}(x),\mu^{\downarrow}(x)\right)_{x \in \N}$ as well as refined riggings $\left(\mathbf{J}^{\sigma}(x)\right)_{x \in \N}$ for each $\sigma\in\{ \uparrow, \downarrow\}$ from any ball configuration $\eta \in \Omega$. This procedure turns out to be useful in order to establish connections between the KKR bijection and the seat number configuration, see Proposition \ref{prop:seat_KKR} for details.  In particular, we give an intuitive meaning {to} the KKR bijection, which was a purely combinatorial object, by means of the carrier with seat numbers.
\end{enumerate}
As a result, we obtain {an} explicit relation between the KKR bijection and the slot configuration, an open problem addressed in \cite{FNRW}. Our results reveal that  the slot configuration can be defined independently of the notion of solitons, see Section \ref{subseq:seat} and Proposition \ref{prop:seat_slot} for details. In addition, we will see that the slot configuration is more related to ``energy functions" than solitons, see Proposition \ref{prop:seat_KKR} and the discussion following it. We also explain an interpretation of our result. First, we note that the original slot configuration is defined via the notion of solitons identified by the TS algorithm. In this sense, the slot configuration sees the BBS as a classical integrable system. On the other hand, the linearization property of the rigged configuration obtained via the KKR bijection is closely related to a combinatorial $R$ matrix which satisfies the Yang-Baxter equation, that is, in this formalism the BBS is treated as a quantum integrable system \cite{IKT}. Therefore, the present result can be considered as a new way to connect two different perspectives (classical and quantum) on the BBS.

\subsection{Outline}

The rest of the paper is organized as follows. In Section \ref{sec:main}, we first define the seat number configuration $(\eta^{\sigma}_{k})_{k \in \N}$. Then we explain how the original BBS is linearized by simple observations on seat numbers, see Theorem \ref{thm:seat_ISM}. In the subsequent subsection, we briefly summarize the relationships between other linearizations and the seat number configuration, where the main results in this direction are Propositions \ref{prop:seat_KKR} and \ref{prop:seat_slot}. Finally, we state the relation between the KKR bijection and the slot configuration in Theorem \ref{thm:2}. As a direct consequence of Theorem \ref{thm:2}, we show that the BBS$(\ell)$ can be linearized by the slot decomposition for any $\ell < \infty$ as well as the seat number configuration, see Theorem \ref{thm:slot_finite}. 
Some possible extensions for other models and applications to generalized hydrodynamics of our results are also discussed at the end of Section \ref{subsec:relation}.
In Section \ref{sec:seat}, we describe some basic properties of seat number configurations and we give a proof of Theorem \ref{thm:seat_ISM}. In Section \ref{sec:KKR}, first we recall the definition of rigged configurations and of the KKR bijection. Then, we introduce the {\it interlacing Young diagrams} algorithm, and prove Proposition \ref{prop:seat_KKR} by using this algorithm. In Section \ref{sec:slot}, first we recall the definition of the slot configuration and the corresponding slot decomposition. Then, we prove Proposition \ref{prop:seat_slot}, Theorem \ref{thm:2} and Theorem \ref{thm:slot_finite}.

\section{Main results}\label{sec:main}

In this section we introduce a carrier with seat numbers, and corresponding seat number configuration. Unlike the exisiting methods (KKR bijection and the slot configuration), the seat number configuration can always be defined for any $\eta \in \Omega$, and linearize the dynamics of the BBS starting from $\eta$. When {$\eta$ satisfies $\sum_{x \in \N} \eta(x) < \infty$}, we can obtain {an} explicit relation between the KKR bijection / slot configuration and the seat number configuration. As a result, we {determine} relationships between the KKR bijection and the slot configuration. 

\subsection{Seat number configuration}\label{subseq:seat}

Now, we consider a situation in which the seats of the carrier, introduced in the previous section, are indexed by $\N$ and the refined update rule of such a carrier is given as follows:
\begin{itemize}
    \item if there is a ball at site $x$, namely $\eta(x)=1$, then the carrier picks the ball and puts it at the empty seat with the smallest seat number;  
    \item if the site $x$ is empty, namely $\eta(x)=0$, and if there is at least one occupied seat, then the carrier puts down the ball at the occupied seat with the smallest seat number;
    \item otherwise, the carrier just {passes} through. 
\end{itemize}
The above rule can also be summarized by recording in functions $\mathcal{W}=\left(\mathcal{W}_{k}\right)_{k \in \N}, \ \mathcal{W}_{k} : \Z_{\ge 0} \to \{0,1\}$ whether the seat No. $k$ is occupied at site $x$, i.e., we recursively define $\mathcal{W}$ as $\mathcal{W}_{k}(0) = 0$ for any $k \in \N$ and 
\begin{align}\label{eq:dynamics}
\mathcal{W}_{k}(x) & =\mathcal{W}_{k}(x-1) + \eta(x)(1-\mathcal{W}_{k}(x-1))\prod_{\ell=1}^{k-1}\mathcal{W}_{\ell}(x-1) \\  & -(1-\eta(x))\mathcal{W}_{k}(x-1)\prod_{\ell=1}^{k-1}(1-\mathcal{W}_{\ell}(x-1)).
 \end{align}
Then, $\mathcal{W}_{k}(x)=0$ means that the seat No.$k$ of the carrier passing at site $x$ is empty, while $\mathcal{W}_{k}(x)=1$ means that the seat No. $k$ of the carrier is occupied. We call $\mathcal{W}=(\mathcal{W}_{k})_k$ the carrier with seat numbers.
It is obvious by definition that for the classical carrier $W_{\ell}$ with capacity $\ell \in \N \cup \{\infty\}$,
\begin{align}\label{eq:cap_seat}
W_{\ell}(x)=\sum_{k=1}^{\ell} \mathcal{W}_{k}(x)
\end{align}
holds, and we see that $\mathcal{W}$ is a refinement of $W_{\ell}$. Now, we say that a site $x$ is $(k,\uparrow)$-seat if $\eta(x)=1$ and the ball picked at $x$ sits at the seat No. $k$. In the same way, we say that a site $x$ is $(k,\downarrow)$-seat if $\eta(x)=0$ and the ball seated at No. $k$ is put down at $x$. 
Then by using this notion, we define the seat number configuration $\eta^{\sigma}_{k} \in \Omega$, $k \in \N, \sigma \in \left\{ \uparrow,\downarrow \right\}$, as
        \begin{align}
            \eta^{\uparrow}_{k}(x) &:= \begin{dcases} 1 \ & \text{if} \ x \ \text{is a} \ (k,\uparrow)\text{-seat} \\ 0 \ & \text{otherwise} \end{dcases} \\
            &=\mathbf{1}_{\{\mathcal{W}_{k}(x)> \mathcal{W}_{k}(x-1)\}} \\ 
            &= \eta(x)(1-\mathcal{W}_{k}(x-1))\prod_{\ell=1}^{k-1}\mathcal{W}_{\ell}(x-1) \label{eq:kup_alt}, \\
            \eta^{\downarrow}_{k}(x) &:= \begin{dcases} 1 \ & \text{if} \ x \ \text{is a} \ (k,\downarrow)\text{-seat} \\ 0 \ & \text{otherwise} \end{dcases} \\
            &=\mathbf{1}_{\{\mathcal{W}_{k}(x)< \mathcal{W}_{k}(x-1)\}} \\ 
            &= (1-\eta(x))\mathcal{W}_{k}(x-1)\prod_{\ell=1}^{k-1}(1-\mathcal{W}_{\ell}(x-1)) \label{eq:kdown_alt},
        \end{align}
    where the third equalities in \eqref{eq:kup_alt} and \eqref{eq:kdown_alt} are consequences of \eqref{eq:dynamics}.  For later use, we note that  
        \begin{align}
            \eta^{\uparrow}_{k}(x) - \eta^{\downarrow}_{k}(x) &= \mathbf{1}_{\{\mathcal{W}_{k}(x)> \mathcal{W}_{k}(x-1)\}} - \mathbf{1}_{\{\mathcal{W}_{k}(x)< \mathcal{W}_{k}(x-1)\}} \\
            &= \mathcal{W}_{k}(x) - \mathcal{W}_{k}(x-1),
        \end{align}
    and thus we obtain  
        \begin{align}\label{eq:lem:1_1}
            \mathcal{W}_{k}(x) = \sum_{y=1}^{x}(\eta^{\uparrow}_{k}(y) -\eta^{\downarrow}_{k}(y)).
        \end{align}
Observe that there is at most one ball getting in and out at each site. Hence, if a ball gets in or out at site $x$, i.e., $W_{\infty}(x - 1) \neq W_{\infty}(x)$, then the seat number of $x$, that is the $(k,\sigma)$ satisfying $\eta^{\sigma}_k(x)=1$, is uniquely determined. On the other hand, if site $x$ is empty and any seat is vacant at $x-1$, i.e., $W_{\infty}(x - 1) = W_{\infty}(x) = 0$, then we call such $x$ a {\it record}, following \cite{FNRW}. {We note that the operator $T : \Omega \to \Omega$ can be regarded as a flip operator of $0$s and $1$s except for records, i.e., 
    \begin{align}\label{eq:rec_T}
        T\eta(x) 
        = \begin{dcases}
            1 - \eta(x) \ & \ \text{if } x \text{ is not a record}, \\
            \eta(x) \ & \ \text{otherwise}.
        \end{dcases}
    \end{align}}For later use, we define $r(x) \in \{0,1 \}$ as the function such that $r(x) = 1$ if and only if $x$ is a record. 
From the above observations, we see that any site of given ball configuration can be distinguished either as a $(k,\sigma)$-seat for some $k, \sigma$ or a record. In {formulas}, we have 
    \begin{align}
        \sum_{k \in \N} \eta^{\uparrow}_{k}(x) = \eta(x), \qquad
        r(x) + \sum_{k \in \N} \eta^{\downarrow}_{k}(x) = 1 - \eta(x)
    \end{align}
for any $x$. Hence, it is obvious that $r(x)$ is given by
    \begin{align}
        r(x) = 1- \sum_{k \in \N}(\eta^{\uparrow}_{k}(x)+ \eta^{\downarrow}_{k}(x)) \label{eq:record}.
    \end{align} 
Figure \ref{fig:seatnumber_conf} shows the values of $ \mathcal{W}_{k}(\cdot)$ and the seat number configuration for the ball configuration $\eta = 11001110110001100\dots$. Note that the same specific ball configuration will be repeatedly used throughout this paper to facilitate comparison of multiple methods.

\begin{figure}[H]
\footnotesize
\setlength{\tabcolsep}{5pt}
\renewcommand{\arraystretch}{2}
\begin{center}
\begin{tabular}{rcccccccccccccccccccc}
    $x$ & 0 & 1 &  2 &  3 &  4 &  5 &  6 & 7 &  8 & 9 & 10 & 11 & 12 & 13 & 14 & 15 & 16 & 17 & 18 & 19 \\
 \hline\hline $\eta(x)$ & & 1 & 1& 0 & 0 & 1 & 1 & 1 & 0 & 1 & 1 & 0 & 0 & 0 & 1 & 1 & 0 & 0 & 0 & 0 \\
   \hline
   \hline
   $W_{4}(x)$ & 0 & 1 & 2 & 1 & 0 & 1 & 2 & 3 & 2 & 3 & 4 & 3 & 2 & 1 & 2 & 3 & 2 & 1 & 0 & 0\\
   \hline
    $\mathcal{W}_{1}(x)$ & 0 & 1 & 1 & 0 & 0 & 1 & 1 & 1 & 0 & 1 & 1 & 0 & 0 & 0 & 1 & 1 & 0 & 0 & 0 & 0\\
    \hline
    $\eta^{\uparrow}_{1}(x)$ & & 1 & 0 & 0 & 0 & 1 & 0 & 0 & 0 & 1 & 0 & 0 & 0 & 0 & 0 & 0 & 0 & 0 & 0& 0\\
    \hline
    $\eta^{\downarrow}_{1}(x)$ & & 0 & 0& 1 & 0 & 0 & 0 & 0 & 1 & 0 & 0 & 1 & 0 & 0 & 0 & 0 & 1 & 0 & 0& 0\\
    \hline
    $\mathcal{W}_{2}(x)$ & 0  & 0 & 1 & 1 & 0 & 0 & 1 & 1 & 1 & 1 & 1 & 1 & 0 & 0 & 0 & 1 & 1 & 0 & 0& 0\\
    \hline
    $\eta^{\uparrow}_{2}(x)$ & & 0 & 1 & 0 & 0 & 0 & 1 & 0 & 0 & 0 & 0 & 0 & 0 & 0 & 0 & 1 & 0 & 0 & 0& 0\\
    \hline
    $\eta^{\downarrow}_{2}(x)$ & & 0 & 0 & 0 & 1 & 0 & 0 & 0 & 0 & 0 & 0 & 0 & 1 & 0 & 0 & 0 & 0 & 1 & 0& 0\\
    \hline
    $\mathcal{W}_{3}(x)$ & 0 &  0 & 0 & 0 & 0 & 0 & 0 & 1 & 1 & 1 & 1 & 1 & 1 & 0 & 0 & 0 & 0 & 0 & 0& 0\\
    \hline
    $\eta^{\uparrow}_{3}(x)$ & &  0 & 0 & 0 & 0 & 0 & 0 & 1 & 0 & 0 & 0 & 0 & 0 & 0 & 0 & 0 & 0 & 0 & 0& 0\\
    \hline
    $\eta^{\downarrow}_{3}(x)$ & &  0 & 0 & 0 & 0 & 0 & 0 & 0 & 0 & 0 & 0 & 0 & 0 & 1 & 0 & 0 & 0 & 0 & 0& 0\\
    \hline
    $\mathcal{W}_{4}(x)$ & 0 &  0 & 0 & 0 & 0 & 0 & 0 & 0 & 0 & 0 & 1 & 1 & 1 & 1 & 1 & 1 & 1 & 1 & 0& 0\\
    \hline
    $\eta^{\uparrow}_{4}(x)$ & &  0 & 0 & 0 & 0 & 0 & 0 & 0 & 0 & 0 & 1 & 0 & 0 & 0 & 0 & 0 & 0 & 0 & 0& 0\\
    \hline
    $\eta^{\downarrow}_{4}(x)$ & &  0 & 0 & 0 & 0 & 0 & 0 & 0 & 0 & 0 & 0 & 0 & 0 & 0 & 0 & 0 & 0 & 0 & 1& 0\\
    \hline
    $r(x)$ & & 0 & 0 & 0 & 0 & 0 & 0 & 0 & 0 & 0 & 0 & 0 & 0 & 0 & 0 & 0 & 0 & 0 & 0 & 1\\
   \end{tabular}
\end{center}
\caption{Seat number configurations and records.}\label{fig:seatnumber_conf}
\end{figure}

Now we observe the relationship between the seat number configuration and the solitons identified by the TS algorithm. As we will see in Section \ref{sec:seat}, for any $\eta \in \Omega_{< \infty}$, 
the total number of $(k,\uparrow)$-seats is the same as that of $(k,\downarrow)$-seats for each $k \in \N$, where $\Omega_{< \infty} \subset \Omega$ is the set of all finite ball configurations
\begin{align}
        \Omega_{< \infty} := \left\{ \eta \in \Omega \ ; \ \sum_{x \in \N} \eta(x) < \infty \right\}.
\end{align} 
Moreover, the total number of $(k,\sigma)$-seats is conserved in time for each $k \in \N$ and $\sigma \in \{\uparrow, \downarrow\}$, that is, $\sum_{x \in \N
} \eta^{\uparrow}_{k}(x) = \sum_{x \in \N} \eta^{\downarrow}_{k}(x) = \sum_{x \in \N} T\eta^{\uparrow}_{k}(x)$. This relation will be established below in \eqref{eq:flip}, see also Remark \ref{rem:conserv}. 
On the other hand, for a configuration where all entries are $``0"$ except for $k$ consecutive ``$1$"'s, i.e. there is only one soliton and its size is $k$, we can easily see that such $k$-soliton is composed by one of each $(\ell,\sigma)$-seat for $ 1 \le \ell \le k$ and $\sigma \in \left\{ \uparrow, \downarrow \right\}$, see Figure \ref{fig:onlyone} for example. For any configuration in $\Omega_{< \infty}$, we will show that such {a} relation between the seat number configuration and solitons is also valid, that is, any $k$-soliton is composed by one of each $(\ell,\sigma)$-seat for $ 1 \le \ell \le k$ and $\sigma \in \left\{ \uparrow, \downarrow \right\}$, see Proposition \ref{prop:seat_slot} and Section \ref{sec:slot} for details. Hence, for any $\eta \in \Omega_{<\infty}$ we have the formula
\begin{align}
    \left| \left\{ k  \text{-solitons in }  \eta  \right\} \right| = \sum_{x \in \N} \left( \eta^{\uparrow}_{k}(x) - \eta^{\uparrow}_{k+1}(x) \right) = \sum_{x \in \N} \left( \eta^{\downarrow}_{k}(x) - \eta^{\downarrow}_{k+1}(x) \right).
\end{align}
In addition, if $x$ is a record, then by following the TS algorithm, all solitons in $\left( \eta(y)\right)_{1 \le y \le x}$ can be identified independently of $\left( \eta(y)\right)_{y \ge x + 1}$, and we claim that the following equation
    \begin{align}\label{eq:soliton=seat}
        &\left| \left\{ k  \text{-solitons in }  \left( \eta(y)\right)_{1 \le y \le x} \right\} \right| = \sum_{y = 1}^{x} \left( \eta^{\uparrow}_{k}(y) - \eta^{\uparrow}_{k+1}(y) \right) = \sum_{y = 1}^{x} \left( \eta^{\downarrow}_{k}(y) - \eta^{\downarrow}_{k+1}(y) \right)
    \end{align}
holds for any $k \in \N$, while for general $x \in \N$ \eqref{eq:soliton=seat} may not hold. Since any element of $\eta \in \Omega_{< \infty}$ consists of records except for a finite number of sites,  $\eta^{\sigma}_{k}(\cdot) - \eta^{\sigma}_{k+1}(\cdot)$ can be considered as the local density of $k$-solitons for each $\sigma \in \left\{ \uparrow, \downarrow \right\}$. Note that
the above claim will be justified by Proposition \ref{prop:seat_slot}.

\begin{figure}[H]
\footnotesize
\setlength{\tabcolsep}{5pt}
\renewcommand{\arraystretch}{2}
\begin{center}
\begin{tabular}{rccccccccccc}
 $\eta(x)$ & \dots & 1 & 1 & 1 & 1 & 0 & 0 & 0 & 0 &  \dots  \\
 \hline 
 $\eta^{\uparrow}_{1}(x)$ & \dots & 1 & 0 & 0 & 0 & 0 & 0 & 0 & 0 &  \dots  \\
 \hline
 $\eta^{\downarrow}_{1}(x)$ & \dots & 0 & 0 & 0 & 0 & 1 & 0 & 0 & 0 &  \dots  \\
 \hline
$\eta^{\uparrow}_{2}(x)$ & \dots & 0 & 1 & 0 & 0 & 0 & 0 & 0 & 0 &  \dots  \\
 \hline
 $\eta^{\downarrow}_{2}(x)$ & \dots & 0 & 0 & 0 & 0 & 0 & 1 & 0 & 0 &  \dots  \\
 \hline
 $\eta^{\uparrow}_{3}(x)$ & \dots & 0 & 0 & 1 & 0 & 0 & 0 & 0 & 0 &  \dots  \\
 \hline
 $\eta^{\downarrow}_{3}(x)$ & \dots & 0 & 0 & 0 & 0 & 0 & 0 & 1 & 0 &  \dots  \\
 \hline
 $\eta^{\uparrow}_{4}(x)$ & \dots & 0 & 0 & 0 & 1 & 0 & 0 & 0 & 0 &  \dots  \\
 \hline
 $\eta^{\downarrow}_{4}(x)$ & \dots & 0 & 0 & 0 & 0 & 0 & 0 & 0 & 1 &  \dots  \\
   \end{tabular}
\end{center}
\caption{For the case where only one $4$-soliton is included in $\eta$.}\label{fig:onlyone}
\end{figure}

When we consider {a} general ball configuration $\eta \in \Omega$, the TS algorithm may not work because there can be infinite number of balls, and thus we may not be able to identify solitons in $\eta$. However, since the construction of the seat number configuration is sequential, i.e., the value of $\eta^{\sigma}_{k}(x)$ can be determined by $\left(\eta(y)\right)_{1 \le y \le x}$ for any $k \in \N$ and $\sigma \in \left\{\uparrow, \downarrow\right\}$, we can always define $\eta^{\sigma}_{k}(\cdot)$ for any $\eta \in \Omega$. Therefore, motivated by the above discussion, to study the dynamical behavior of the BBS for general  $\eta \in \Omega$, we define $m^{\sigma}_{k} : \Z_{\ge 0} \to \Z_{\ge 0}$ as $m^{\sigma}_{k}(0) := 0$, and
            \begin{align}
                m^{\sigma}_{k}(x) := \sum_{y = 1}^{x} \left( \eta^{\sigma}_{k}(y) - \eta^{\sigma}_{k+1}(y) \right), \label{eq:updowndiff} 
            \end{align}
for any $k \in \N$, $x \in \N$ and $\sigma \in \left\{\uparrow,\downarrow \right\}$. Note that from \eqref{eq:lem:1_1} and \eqref{eq:updowndiff}, we get
    \begin{align}
        m^{\uparrow}_{k}(x) - m^{\downarrow}_{k}(x) &= \mathcal{W}_{k}(x) - \mathcal{W}_{k+1}(x) \in \left\{-1,0,1 \right\} \label{eq:diff1}, \\
        m^{\sigma}_{k}(x+1) - m^{\sigma}_{k}(x) &= \eta^{\sigma}_{k}(x+1) - \eta^{\sigma}_{k+1}(x+1) \in \left\{-1,0,1 \right\} \label{eq:diff2},
    \end{align}
for any $k \in \N$, $x \in \Z_{\ge 0}$ and $\sigma \in \left\{\uparrow,\downarrow \right\}$.
We then introduce the $j$-th leftmost matching point $\tau_{k}(j)$ as 
            \begin{align}
                \tau_{k}(j) &:= \min\left\{ x \in \Z_{\ge 0} \ ; \ m^{\sigma}_{k}(x) \ge j \ \text{for both} \ \sigma \in \left\{\uparrow,\downarrow \right\} \right\} \\
                &= \min\left\{ x \in \Z_{\ge 0} \ ; \ m^{\uparrow}_{k}(x) = m^{\downarrow}_{k}(x) = j \right\} \label{eq:defoftau},
            \end{align}
for any $k, j \in \N$, where the second equality in \eqref{eq:defoftau} is a consequence of \eqref{eq:diff1} and \eqref{eq:diff2}. In terms of solitons, $\tau_{k}(j)$ is the site where the $j$-th $k$-soliton is identified by the TS algorithm, see Proposition \ref{prop:tau_soliton} for details. For example, in Figure \ref{fig:tauands}, the ball configuration $\eta$ contains one $4$-soliton colored in \textcolor{brown}{brown}, two $2$-solitons colored in \textcolor{red}{red} and \textcolor{green}{green}, and one $1$-soliton colored in \textcolor{blue}{blue}, and one can check that the {right}most component of each soliton $x = 4,9,17,18$ are $\tau_{2}(1), \tau_{1}(1), \tau_{2}(2), \tau_{4}(1)$, respectively.  Indeed, the following proposition justifies the above observation and its proof will be presented in Subsection \ref{proof:match}.

    \begin{proposition}\label{prop:match}
        Suppose that $\eta \in \Omega$ and $\tau_{k}(j) < \infty$ for some $k \in \N$ and $j \in \N$. Then, 
        \begin{align}
            x \ge \tau_{k}(j) \ \text{if and only if} \ m^{\sigma}_{k}(x) \ge j \ \text{for both} \ \sigma \in \left\{\uparrow,\downarrow \right\}.
        \end{align}
    \end{proposition}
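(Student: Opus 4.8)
The plan is to leverage the monotonicity encoded in \eqref{eq:diff2}, namely that each $m^{\sigma}_{k}$ increases by steps in $\{-1,0,1\}$, together with the sandwiching relation \eqref{eq:diff1} between $m^{\uparrow}_{k}$ and $m^{\downarrow}_{k}$. The ``if'' direction is immediate from the definition \eqref{eq:defoftau}: if $m^{\sigma}_{k}(x) \ge j$ for both $\sigma$, then $x$ lies in the set over which the minimum defining $\tau_{k}(j)$ is taken, so $x \ge \tau_{k}(j)$. The substance is the ``only if'' direction, for which I would show that once both $m^{\uparrow}_{k}$ and $m^{\downarrow}_{k}$ have reached level $j$ (which by hypothesis and \eqref{eq:defoftau} happens exactly at $x = \tau_{k}(j)$, where $m^{\uparrow}_{k}(\tau_{k}(j)) = m^{\downarrow}_{k}(\tau_{k}(j)) = j$), neither function ever drops back below $j$.

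The key step is therefore the claim: for all $x \ge \tau_{k}(j)$ and both $\sigma$, $m^{\sigma}_{k}(x) \ge j$. I would prove this by induction on $x \ge \tau_{k}(j)$, the base case $x = \tau_{k}(j)$ being the equality just noted. For the inductive step, suppose $m^{\sigma}_{k}(x) \ge j$ for both $\sigma$ but, towards a contradiction, $m^{\sigma_0}_{k}(x+1) = j-1$ for some $\sigma_0$; by \eqref{eq:diff2} this forces $m^{\sigma_0}_{k}(x) = j$ exactly, and by \eqref{eq:diff2} again $\eta^{\sigma_0}_{k}(x+1) - \eta^{\sigma_0}_{k+1}(x+1) = -1$, i.e.\ $\eta^{\sigma_0}_{k}(x+1) = 0$ and $\eta^{\sigma_0}_{k+1}(x+1) = 1$. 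Now \eqref{eq:diff1} at site $x$ gives $m^{\uparrow}_{k}(x) - m^{\downarrow}_{k}(x) \in \{-1,0,1\}$, so the other value $m^{\bar\sigma_0}_{k}(x)$ equals $j-1$, $j$, or $j+1$; I would rule out each possibility using that $\mathcal{W}_{k}(x+1) - \mathcal{W}_{k+1}(x+1) = m^{\uparrow}_{k}(x+1) - m^{\downarrow}_{k}(x+1)$ must still lie in $\{-1,0,1\}$ by \eqref{eq:diff1}, combined with the constraint that $\eta^{\sigma_0}_{k+1}(x+1) = 1$ propagates into $m^{\sigma_0}_{k+1}$ and hence pins down the relationship between the up- and down-counters at seat $k+1$. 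The precise bookkeeping here is where I expect the work to be: I need to translate the local event ``a ball at seat $k+1$ with value $\sigma_0$ at site $x+1$'' into a statement forcing the companion counter $m^{\bar\sigma_0}_{k}$ to have already been $\ge j$ and to remain so, which is exactly what makes $x+1 \ge \tau_{k}(j)$ consistent — the contradiction being that $\tau_{k}(j)$ would then not be minimal, or that \eqref{eq:diff1} is violated at $x+1$.

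The main obstacle is this case analysis at the seat level: unlike the capacity carrier, the seat-number dynamics \eqref{eq:dynamics} couples seat $k$ and seat $k+1$ only through the product terms $\prod_{\ell=1}^{k-1}$, so the argument that $m^{\uparrow}_{k}$ and $m^{\downarrow}_{k}$ cannot separate by more than one and cannot individually backtrack below a jointly-attained level must be extracted carefully from \eqref{eq:diff1} and \eqref{eq:diff2} rather than from any simple conservation law. A cleaner route, which I would try first, is to avoid seat-level bookkeeping entirely and argue purely from the two structural facts \eqref{eq:diff1}–\eqref{eq:diff2}: define $f(x) = \min\{m^{\uparrow}_{k}(x), m^{\downarrow}_{k}(x)\}$, observe $f(\tau_{k}(j)) = j$, and show $f$ is eventually non-decreasing past its first hitting time of any level — the point being that if $f$ decreases at $x+1$ then the minimizing $\sigma$ decreased while by \eqref{eq:diff1} the two are within $1$, and one checks that a decrease of the minimum would force the maximum to decrease too at a nearby site, contradicting that $\tau_k(j)$ was the \emph{first} time both reached $j$. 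If that synchronization argument goes through, the proposition follows without touching \eqref{eq:dynamics} at all; if it does not, the fallback is the explicit seat-level analysis sketched above.
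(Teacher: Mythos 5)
Your overall architecture is sound: the ``if'' direction is indeed immediate from \eqref{eq:defoftau}, the substance is an induction showing that neither counter drops below $j$ after $\tau_{k}(j)$, and you correctly isolate the only dangerous event, namely that $x+1$ is a $(k+1,\sigma_0)$-seat. However, the argument as written does not close, and your preferred ``cleaner route'' cannot be completed. The proposition is false for abstract pairs of sequences satisfying only \eqref{eq:diff1} and \eqref{eq:diff2}: take $a=(0,1,1,0,\dots)$ and $b=(0,0,1,1,\dots)$. All increments and all differences $a(x)-b(x)$ lie in $\{-1,0,1\}$ (and at most one sequence moves per step), both sequences first reach level $1$ at $x=2$, yet $a(3)=0$. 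So no synchronization argument extracted from \eqref{eq:diff1}--\eqref{eq:diff2} alone can work; seat-level input from \eqref{eq:dynamics} is unavoidable, and your fallback defers exactly the step where that input enters (``the precise bookkeeping here is where I expect the work to be'').

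The missing fact is a sharpening of \eqref{eq:diff1} at the decrease points: if $y$ is a $(k+1,\sigma)$-seat for either $\sigma$, then $m^{\uparrow}_{k}(y)=m^{\downarrow}_{k}(y)$ \emph{exactly}, not merely within $1$. Indeed, $\eta^{\uparrow}_{k+1}(y)=1$ forces $\mathcal{W}_{\ell}(y)=1$ for all $\ell\le k+1$, and $\eta^{\downarrow}_{k+1}(y)=1$ forces $\mathcal{W}_{\ell}(y)=0$ for all $\ell\le k+1$ (Lemma \ref{lem:1}), so in either case $m^{\uparrow}_{k}(y)-m^{\downarrow}_{k}(y)=\mathcal{W}_{k}(y)-\mathcal{W}_{k+1}(y)=0$. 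With this, your contradiction is immediate and the three-way case analysis on $m^{\check{\sigma}_0}_{k}(x)$ is unnecessary: if $m^{\sigma_0}_{k}(x+1)=j-1$, then $x+1$ is a $(k+1,\sigma_0)$-seat, hence $m^{\check{\sigma}_0}_{k}(x+1)=m^{\sigma_0}_{k}(x+1)=j-1$; but a site carries at most one seat label, so $x+1$ is not a $(k+1,\check{\sigma}_0)$-seat, whence $m^{\check{\sigma}_0}_{k}(x+1)\ge m^{\check{\sigma}_0}_{k}(x)\ge j$ --- a contradiction. This equality at the decrease points is exactly the paper's Lemma \ref{lem:pm_eq}, stated there at the sites $s_{k}(i)$ where $\xi_{k}$ jumps (which include all $(k+1,\sigma)$-seats); the paper then packages the induction as monotonicity of $i\mapsto m^{\sigma}_{k}(s_{k}(i))$ (Lemma \ref{lem:nonnegative}), but the content is the same as the induction you set up.
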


\begin{figure}%[H]
\footnotesize
\setlength{\tabcolsep}{3pt}
\begin{center}
\renewcommand{\arraystretch}{2}
\begin{tabular}{rcccccccccccccccccccc}
    $x$ &  0 & 1 &  2 &  3 &  4 &  5 &  6 & 7 &  8 & 9 & 10 & 11 & 12 & 13 & 14 & 15 & 16 & 17 & 18 & 19 \\
 \hline\hline $\eta(x)$ &  & \color{red}{1} & \color{red}{1} & \color{red}{0} & \color{red}{0} & \color{brown}{1} & \color{brown}{1} & \color{brown}{1} & \color{blue}{0} & \color{blue}{1} & \color{brown}{1} & \color{brown}{0} & \color{brown}{0} & \color{brown}{0} & \color{green}{1} & \color{green}{1} & \color{green}{0} & \color{green}{0} & \color{brown}{0} & 0 \\
   \hline
   \hline
   $\eta^{\uparrow}_{1}(x)$ &  & 1 & 0 & 0 & 0 & 1 & 0 & 0 & 0 & 1 & 0 & 0 & 0 & 0 & 1 & 0 & 0 & 0 & 0& 0\\
    \hline
    $\eta^{\downarrow}_{1}(x)$ &  & 0 & 0& 1 & 0 & 0 & 0 & 0 & 1 & 0 & 0 & 1 & 0 & 0 & 0 & 0 & 1 & 0 & 0& 0\\
    \hline
    $\eta^{\uparrow}_{2}(x)$ &  & 0 & 1 & 0 & 0 & 0 & 1 & 0 & 0 & 0 & 0 & 0 & 0 & 0 & 0 & 1 & 0 & 0 & 0& 0\\
    \hline
    $\eta^{\downarrow}_{2}(x)$ &  & 0 & 0 & 0 & 1 & 0 & 0 & 0 & 0 & 0 & 0 & 0 & 1 & 0 & 0 & 0 & 0 & 1 & 0& 0\\
    \hline
    $\eta^{\uparrow}_{3}(x)$ &  & 0 & 0 & 0 & 0 & 0 & 0 & 1 & 0 & 0 & 0 & 0 & 0 & 0 & 0 & 0 & 0 & 0 & 0& 0\\
    \hline
    $\eta^{\downarrow}_{3}(x)$ &  & 0 & 0 & 0 & 0 & 0 & 0 & 0 & 0 & 0 & 0 & 0 & 0 & 1 & 0 & 0 & 0 & 0 & 0& 0\\
    \hline
    $\eta^{\uparrow}_{4}(x)$ &  & 0 & 0 & 0 & 0 & 0 & 0 & 0 & 0 & 0 & 1 & 0 & 0 & 0 & 0 & 0 & 0 & 0 & 0& 0\\
    \hline
    $\eta^{\downarrow}_{4}(x)$ &  & 0 & 0 & 0 & 0 & 0 & 0 & 0 & 0 & 0 & 0 & 0 & 0 & 0 & 0 & 0 & 0 & 0 & 1 & 0 \\
    \hline 
    $r(x)$ &  & 0 & 0 & 0 & 0 & 0 & 0 & 0 & 0 & 0 & 0 & 0 & 0 & 0 & 0 & 0 & 0 & 0 & 0 & 1\\
    \hline
    $\xi_{1}(x)$ & 0 & 0 & 1 & 1 & 2 & 2 & 3 & 4 & 4 & 4 & 5 & 5 & 6 & 7 & 7 & 8 & 8 & 9 & 10 & 11 \\
    \hline
    $\xi_{2}(x)$ & 0 & 0 & 0 & 0 & 0 & 0 & 0 & 1 & 1 & 1 & 2 & 2 & 2 & 3 & 3 & 3 & 3 & 3 & 4 & 5\\
    \hline
    $\xi_{3}(x)$ & 0 & 0 & 0 & 0 & 0 & 0 & 0 & 0 & 0 & 0 & 1 & 1 & 1 & 1 & 1 & 1 & 1 & 1 & 2 & 3\\
    \hline
    $\xi_{4}(x)$ & 0 & 0 & 0 & 0 & 0 & 0 & 0 & 0 & 0 & 0 & 0 & 0 & 0 & 0 & 0 & 0 & 0 & 0 & 0 & 1 \\
    \hline
    $m^{\uparrow}_{1}(x)$ & 0 & 1 & 0 & 0 & 0 & 1 & 0 & 0 & 0 & \textcolor{blue}{1} & 1 & 1 & 1 & 1 & 2 & 1 & 1 & 1 & 1 & 1 \\
    \hline
    $m^{\downarrow}_{1}(x)$ & 0 & 0 & 0 & 1 & 0 & 0 & 0 & 0 & 1 & \textcolor{blue}{1} & 1 & 2 & 1 & 1 & 1 & 1 & 2 & 1 & 1 & 1 \\
    \hline
    $m^{\uparrow}_{2}(x)$ & 0 & 0 & 1 & 1 & \textcolor{red}{1} & 1 & 2 & 1 & 1 & 1 & 1 & 1 & 1 & 1 & 1 & 2 & 2 & \textcolor{green}{2} & 2 & 2\\
    \hline
    $m^{\downarrow}_{2}(x)$ & 0 & 0 & 0 & 0 & \textcolor{red}{1} & 1 & 1 & 1 & 1 & 1 & 1 & 1 & 2 & 1 & 1 & 1 & 1 & \textcolor{green}{2} & 2 & 2\\
    \hline
    $m^{\uparrow}_{3}(x)$ & 0 & 0 & 0 & 0 & 0 & 0 & 0 & 1 & 1 & 1 & 0 & 0 & 0 & 0 & 0 & 0 & 0 & 0 & 0 & 0\\
    \hline
    $m^{\downarrow}_{3}(x)$ & 0 & 0 & 0 & 0 & 0 & 0 & 0 & 0 & 0 & 0 & 0 & 0 & 0 & 1 & 1 & 1 & 1 & 1 & 0 & 0\\
    \hline
    $m^{\uparrow}_{4}(x)$ & 0 & 0 & 0 & 0 & 0 & 0 & 0 & 0 & 0 & 0 & 1 & 1 & 1 & 1 & 1 & 1 & 1 & 1 & \textcolor{brown}{1} & 1 \\
    \hline
    $m^{\downarrow}_{4}(x)$ & 0 & 0 & 0 & 0 & 0 & 0 & 0 & 0 & 0 & 0 & 0 & 0 & 0 & 0 & 0 & 0 & 0 & 0 & \textcolor{brown}{1} & 1 \\
   \end{tabular}
\end{center}
\caption{How the value of the functions $m^{\sigma}_{k}$ and $\xi_{k}$ change for the ball configuration $\eta = 11001110110001100\dots$. The solitons identified by the TS algorithm and leftmost matching points of $m^{\sigma}_{k}$ are highlighted in color, and one can see that for each $k \in \N$, the rightmost component of a $k$-soliton is a leftmost matching point of $m^{\sigma}_{k}$, respectively. {We note that the functions $\xi_{k}(x), k \in \N, x \in \Z_{\ge 0}$ are defined immediately after Proposition \ref{prop:match}.} }\label{fig:tauands}
\end{figure}

Now we introduce a way to determine the {\it effective position} of $\tau_{k}(\cdot)$. First, we introduce the functions $\xi_{k}(x)$ counting the total number of $(\ell,\uparrow), (\ell,\downarrow)$-seats satisfying $\ell \ge k + 1$ and records up to $x$ as $\xi_{k}(0) := 0$ and
 \begin{align}
%                m^{\sigma}_{k}(x) &:= \sum_{y = 1}^{x} \left( \eta^{\sigma}_{k}(y) - \eta^{\sigma}_{k+1}(y) \right), \label{eq:updowndiff} \\
%                m^{\sigma}_{k}(0) &:= 0, \\
                \xi_{k}(x) 
                &:= \sum_{\ell \ge k + 1} \sum_{y = 1}^{x} \left( \eta^{\uparrow}_{\ell}(y) + \eta^{\downarrow}_{\ell}(y) \right) + \sum_{y = 1}^{x} r(x)\\
                &= x - \sum_{\ell = 1}^{k} \sum_{y = 1}^{x} \left( \eta^{\uparrow}_{\ell}(y) + \eta^{\downarrow}_{\ell}(y) \right)
            \end{align}
%On the other hand, $\xi_{k}(x)$ counts the total number of $(l,\uparrow), (l,\downarrow)$-seats satisfying $\ell \ge k + 1$ from $1$ to $x$. 
for any $k \in \N$, $x \in \Z_{\ge 0}$ and $\sigma \in \left\{\uparrow,\downarrow \right\}$. Figure \ref{fig:tauands} also shows an example of $\xi_{k}\left(\cdot\right)$. Then, the effective position of $\tau_{k}(j)$ is defined as $\xi_{k}\left( \tau_{k}(j) \right)$ for any $j \in \N$. We explain the reason of the definition of the effective position from the viewpoint of solitons. First we recall that each $\tau_{k}(j)$ corresponds to a $k$-soliton as pointed out above. Next, we note that the function $\xi_{k}\left(\cdot\right)$ is a non-decreasing function, but constant on sites included in $\ell$-solitons with $\ell \le k$. {In particular, if $\gamma \subset \N$ is a $k$-soliton, then the rightmost component of $\gamma$ is $\tau_{k}(j)$ for some $j \in \N$, and we have $\xi_{k}\left(x\right) = \xi_{k}\left(\tau_{k}(j)\right)$ for any $x \in \gamma$. Thus, by associating $\tau_{k}\left(j\right)$'s to each $k$-soliton, we can consider $k$-solitons as points via $\xi_{k}\left(\cdot\right)$, and the function $\xi_{k}\left( \tau_{k}( \ \cdot \ ) \right)$ can be regarded as measuring certain distances between $\tau_{k}\left(j\right)$'s ignoring $\ell$-seats with $\ell \le k$ between them.}
 Now, we claim that at each time step, $\xi_{k}\left( \tau_{k}(\cdot) \right)$ will be shifted by $k$, i.e., $T\xi_{k}\left( T\tau_{k}(\cdot) \right) = \xi_{k}\left( \tau_{k}(\cdot) \right) + k$, and in this sense we say that $\xi_{k}(\cdot)$ gives the {\it effective positions of $k$-solitons} considering the effect of the interaction between other solitons. In addition, if there are no {interactions} between solitons, then the effective positions are essentially equivalent to the original positions of the solitons in $\eta$. In Figure \ref{fig:effective position} we give an example of $\xi_{k}\left(\cdot\right)$ for the ball configuration $\eta = 111000010010\dots$, which is the same configuration used in Figure \ref{fig:timeevolution}, and it can be seen that the effective positions are shifted by $k$ at one step time evolution, while the components of the solitons are not linearly changed in time due to the interaction. Hereafter, we will justify the above claims not from the viewpoint of solitons, but rather from the viewpoint of the seat number configuration.

\begin{figure}%[H]
\footnotesize
\setlength{\tabcolsep}{4pt}
\begin{center}
\renewcommand{\arraystretch}{2}
\begin{tabular}{rcccccccccccccccccccccccc}
    $x$ & 0 & 1 &  2 &  3 &  4 &  5 &  6 & 7 &  8 & 9 & 10 & 11 & 12 & 13 & 14 & 15 & 16 & 17 & 18 & 19 & 20 & 21 & 22 & $\dots$ \\
    \hline\hline $\eta(x)$ &  & \color{red}{1} & \color{red}{1} & \color{red}{1} & \color{red}{0} & \color{red}{0} & \color{red}{0} & 0 & \color{blue}{1} & \color{blue}{0} & 0 & \color{green}{1} & \color{green}{0} & 0 & 0 & 0 & 0 & 0 & 0 & 0 & 0 & 0 & 0 & $\dots$ \\ 
    \hline
    $\xi_{1}(x)$ & 0 & 0 & 1 & 2 & 2 & 3 & 4 & 5 & 5 & 5 & 6 & 6 & 6 & 7 & 8 & 9 & 10 & 11 & 12 & 13 & 14 & 15 & 16 & $\dots$ \\ 
    \hline
    $\xi_{3}(x)$ & 0 & 0 & 0 & 0 & 0 & 0 & 0 & 1 & 1 & 1 & 2 & 2 & 2 & 3 & 4 & 5 & 6 & 7 & 8 & 9 & 10 & 11 & 12 & $\dots$ \\ 
    \hline
    \\
 \hline 
 $T\eta(x)$ & & 0 & 0 & 0 & \color{red}{1} & \color{red}{1} & \color{red}{1} & \color{red}{0} & \color{red}{0} & \color{blue}{1} & \color{blue}{0} & \color{red}{0} & \color{green}{1} & \color{green}{0} & 0 & 0 & 0 & 0 & 0 & 0 & 0 & 0 & 0 & $\dots$  \\
 \hline
    $T\xi_{1}(x)$ & 0 & 1 & 2 & 3 & 3 & 4 & 5 & 5 & 6 & 6 & 6 & 7 & 7 & 7 & 8 & 9 & 10 & 11 & 12 & 13 & 14 & 15 & 16 & $\dots$ \\ 
    \hline
    $T\xi_{3}(x)$ & 0 & 1 & 2 & 3 & 3 & 3 & 3 & 3 & 3 & 3 & 3 & 3 & 3 & 3 & 4 & 5 & 6 & 7 & 8 & 9 & 10 & 11 & 12 & $\dots$ \\
    \hline \\
 \hline
 $T^2\eta(x)$ & & 0 & 0 & 0 & 0 & 0 & 0 & \color{red}{1} & \color{red}{1} & \color{blue}{0} & \color{blue}{1} & \color{red}{1} & \color{red}{0} & \color{green}{1} & \color{green}{0} & \color{red}{0} & \color{red}{0} & 0 & 0 & 0 & 0 & 0 & 0 & $\dots$  \\
 \hline
    $T^2\xi_{1}(x)$ & 0 & 1 & 2 & 3 & 4 & 5 & 6 & 6 & 7 & 7 & 7 & 8 & 8 & 8 & 8 & 9 & 10 & 11 & 12 & 13 & 14 & 15 & 16 & $\dots$ \\ 
    \hline
    $T^2\xi_{3}(x)$ & 0 & 1 & 2 & 3 & 4 & 5 & 6 & 6 & 6 & 6 & 6 & 6 & 6 & 6 & 6 & 6 & 6 & 7 & 8 & 9 & 10 & 11 & 12 & $\dots$ \\
    \hline \\
 \hline
 $T^3\eta(x)$ & & 0 & 0 & 0 & 0 & 0 & 0 & 0 & 0 & \color{blue}{1} & \color{blue}{0} & 0 & \color{red}{1} & \color{green}{0} & \color{green}{1} & \color{red}{1} & \color{red}{1} & \color{red}{0} & \color{red}{0} & \color{red}{0} & 0 & 0 & 0 & $\dots$  \\
 \hline
    $T^3\xi_{1}(x)$ & 0 & 1 & 2 & 3 & 4 & 5 & 6 & 7 & 8 & 8 & 8 & 9 & 9 & 9 & 9 & 10 & 11 & 11 & 12 & 13 & 14 & 15 & 16 & $\dots$ \\ 
    \hline
    $T^3\xi_{3}(x)$ & 0 & 1 & 2 & 3 & 4 & 5 & 6 & 7 & 8 & 8 & 8 & 9 & 9 & 9 & 9 & 9 & 9 & 9 & 9 & 9 & 10 & 11 & 12 & $\dots$ \\
    \hline \\ 
 \hline
 $T^4\eta(x)$ & & 0 & 0 & 0 & 0 & 0 & 0 & 0 & 0 & 0 & \color{blue}{1} & \color{blue}{0} & 0 & \color{green}{1} & \color{green}{0} & 0 & 0 & \color{red}{1} & \color{red}{1} & \color{red}{1} & \color{red}{0}  & \color{red}{0} & \color{red}{0} & $\dots$ \\
 \hline
    $T^4\xi_{1}(x)$ & 0 & 1 & 2 & 3 & 4 & 5 & 6 & 7 & 8 & 9 & 9 & 9 & 10 & 10 & 10 & 11 & 12 & 12 & 13 & 14 & 14  & 15 & 16 & $\dots$ \\ 
    \hline
    $T^4\xi_{3}(x)$ & 0 & 1 & 2 & 3 & 4 & 5 & 6 & 7 & 8 & 9 & 9 & 9 & 10 & 10 & 10 & 11 & 12 & 12 & 12 & 12 & 12 & 12 & 12 & $\dots$ \\
   \end{tabular}
\end{center}
\caption{At each time step, the effective positions of $k$-solitons are shifted by $k$.}\label{fig:effective position}
\end{figure}

From now on, we return to the viewpoint of the seat number configuration. We introduce $\zeta_{k}(i)$ as the total number of $\tau_{k}$'s located at effective position $i$ in the above sense, i.e., 
    \begin{align}\label{def:zeta}
                \zeta_{k}(i) &:= \left| \left\{ j \in \N \ ; \ \tau_{k}(j) \in \left\{ x \in \Z_{\ge 0} \ ; \ \xi_{k}(x) = i \right\} \right\} \right| .   \end{align}
For the ball configuration in Figure \ref{fig:tauands}, $\zeta$ is given by
    \begin{align}
        \zeta_{k}(i) = \begin{dcases}
                            1 \quad & (k,i) = (4,0), (2,0), (2,3), (1,4), \\
                            0 \quad & \text{otherwise.}
        \end{dcases}
    \end{align}

Now we present one of our main theorems, which claims that the effective position of $\tau_{k}(\cdot)$ is shifted by $k$ in one step time evolution, i.e., the dynamics of the BBS is linearized in terms of $\zeta$.  Before describing the statement, we will give an example. Observe that Figure \ref{fig:seat_linear} shows the seat numbers of $T\eta$, where $\eta$ is the same configuration as in Figure \ref{fig:seatnumber_conf} and \ref{fig:tauands}. From the figure, we see that 
    \begin{align}
        T\zeta_{k}(i) = \begin{dcases}
                            1 \quad & (k,i) = (4,4), (2,2), (2,5), (1,5), \\
                            0 \quad & \text{otherwise.}
        \end{dcases}
    \end{align}
In particular, we have
    \begin{align}
        T\zeta_{k}(i) = \zeta_{k}(i - k),
    \end{align}
for any $k \in \N$ and $i \in \Z_{\ge 0}$, with convention $\zeta_{k}(i) = 0$ for $i < 0$. Our claim is that this relationship holds true for general configurations as well.
\begin{figure}%[H]
\footnotesize
\setlength{\tabcolsep}{3pt}
\renewcommand{\arraystretch}{2}
\begin{center}
\begin{tabular}{rcccccccccccccccccccccccc}
    $x$ &  0 & 1 &  2 &  3 &  4 &  5 &  6 & 7 &  8 & 9 & 10 & 11 & 12 & 13 & 14 & 15 & 16 & 17 & 18 & 19 & 20 & 21 & 22 & 23 \\
    \hline\hline $\eta(x)$ &  & \color{red}{1} & \color{red}{1} & \color{red}{0} & \color{red}{0} & \color{brown}{1} & \color{brown}{1} & \color{brown}{1} & \color{blue}{0} & \color{blue}{1} & \color{brown}{1} & \color{brown}{0} & \color{brown}{0} & \color{brown}{0} & \color{green}{1} & \color{green}{1} & \color{green}{0} & \color{green}{0} & \color{brown}{0} & 0 & 0 & 0 & 0 & 0 \\
 \hline $T\eta(x)$ &  & 0 & 0& \color{red}{1} & \color{red}{1} & \color{red}{0} & \color{red}{0} & 0 & \color{blue}{1} & \color{blue}{0} & 0 & \color{brown}{1} & \color{brown}{1} & \color{brown}{1} & \color{green}{0} & \color{green}{0} & \color{green}{1} & \color{green}{1} & \color{brown}{1} & \color{brown}{0} & \color{brown}{0} & \color{brown}{0} & \color{brown}{0} & 0  \\
   \hline
   \hline
    $T\xi_{1}(x)$ & 0 & 1 & 2 & 2 & 3 & 3 & 4 & 5 & 5 & 5 & 6 & 6 & 7 & 8 & 8 & 9 & 9 & 10 & 11 & 11 & 12 & 13 & 14 & 15 \\
    \hline
    $T\xi_{2}(x)$ & 0 & 1 & 2 & 2 & 2 & 2 & 2 & 3 & 3 & 3 & 4 & 4 & 4 & 5 & 5 & 5 & 5 & 5 & 6 & 6 & 6 & 7 & 8 & 9 \\
    \hline
    $T\xi_{3}(x)$ & 0 & 1 & 2 & 2 & 2 & 2 & 2 & 3 & 3 & 3 & 4 & 4 & 4 & 4 & 4 & 4 & 4 & 4 & 5 & 5 & 5 & 5 & 6 & 7 \\
    \hline
    $T\xi_{4}(x)$ & 0 & 1 & 2 & 2 & 2 & 2 & 2 & 3 & 3 & 3 & 4 & 4 & 4 & 4 & 4 & 4 & 4 & 4 & 4 & 4 & 4 & 4 & 4 & 5 \\
    \hline
    $Tm^{\uparrow}_{1}(x)$ & 0 & 0 & 0 & 1 & 0 & 0 & 0 & 0 & 1 & \textcolor{blue}{1} & 1 & 2 & 1 & 1 & 1 & 1 & 2 & 1 & 1 & 1 & 1 & 1 & 1 & 1 \\
    \hline
    $Tm^{\downarrow}_{1}(x)$ & 0 & 0 & 0 & 0 & 0 & 1 & 0 & 0 & 0 & \textcolor{blue}{1} & 1 & 1 & 1 & 1 & 2 & 1 & 1 & 1 & 1 & 2 & 1 & 1 & 1 & 1 \\
    \hline
    $Tm^{\uparrow}_{2}(x)$ & 0 & 0 & 0 & 0 & 1 & 1 & \textcolor{red}{1} & 1 & 1 & 1 & 1 & 1 & 2 & 1 & 1 & 1 & 1 & \textcolor{green}{2} & 2 & 2 & 2 & 2 & 2 & 2 \\
    \hline
    $Tm^{\downarrow}_{2}(x)$ & 0 & 0 & 0 & 0 & 0 & 0 & \textcolor{red}{1} & 1 & 1 & 1 & 1 & 1 & 1 & 1 & 1 & 2 & 2 & \textcolor{green}{2} & 2 & 2 & 3 & 2 & 2 & 2 \\
    \hline
    $Tm^{\uparrow}_{3}(x)$ & 0 & 0 & 0 & 0 & 0 & 0 & 0 & 0 & 0 & 0 & 0 & 0 & 0 & 1 & 1 & 1 & 1 & 1 & 0 & 0 & 0 & 0 & 0 & 0 \\
    \hline
    $Tm^{\downarrow}_{3}(x)$ & 0 & 0 & 0 & 0 & 0 & 0 & 0 & 0 & 0 & 0 & 0 & 0 & 0 & 0 & 0 & 0 & 0 & 0 & 0 & 0 & 0 & 1 & 0 & 0 \\
    \hline
    $Tm^{\uparrow}_{4}(x)$ & 0 & 0 & 0 & 0 & 0 & 0 & 0 & 0 & 0 & 0 & 0 & 0 & 0 & 0 & 0 & 0 & 0 & 0 & 1 & 1 & 1 & 1 & \textcolor{brown}{1} & 1 \\
    \hline
    $Tm^{\downarrow}_{4}(x)$ & 0 & 0 & 0 & 0 & 0 & 0 & 0 & 0 & 0 & 0 & 0 & 0 & 0 & 0 & 0 & 0 & 0 & 0 & 0 & 0 & 0 & 0 & \textcolor{brown}{1} & 1 \\
   \end{tabular}
\end{center}
\caption{Linearization property of the $(k,\sigma)$-seats.}\label{fig:seat_linear}
\end{figure}

\begin{theorem}\label{thm:seat_ISM}
Suppose that $\eta \in \Omega$ and $0 < \left|\left\{ x \in \Z_{\ge 0} \ ; \  \xi_{k}(x) = i \right\}\right| < \infty$ for some $k \in \N$ and $i \in \Z_{\ge 0}$. Then we have $0 < \left|\left\{ x \in \Z_{\ge 0} \ ; \  T\xi_{k}(x) = i \right\}\right| < \infty$ and
\begin{align}\label{eq:seat_ISM}
(T\zeta)_{k}(i)= \zeta_{k}(i-k),
\end{align}
with convention that $\zeta_{k}(i) =0$ for any $i <0$.
\end{theorem}
{In particular, since the function $\xi_{k}( \cdot )$ strictly increases at each record for any $k \in \N$, we have the following corollary of Theorem \ref{thm:seat_ISM}. 
\begin{corollary}
    Suppose that $\eta \in \Omega_{<\infty}$. For any $k \in \N$ and $i \in \Z_{\ge 0}$, we have 
        \begin{align}
(T\zeta)_{k}(i)= \zeta_{k}(i-k),
\end{align}
with convention that $\zeta_{k}(i) =0$ for any $i <0$.
\end{corollary}}

We give the proof of {Theorem \ref{thm:seat_ISM}} in Section \ref{sec:seat}. We emphasize that the proof is self-contained and none of the relations with other linearization methods are used, and the definitions of $\eta^{\sigma}_{k}, m^{\sigma}_{k}, \tau_{k}, \xi_{k}, \zeta_{k}$ are independent of the notion of solitons.
Note that under slightly stronger assumptions than that of Theorem \ref{thm:seat_ISM}, one can reconstruct $\eta$ from $\zeta$ via the relation to the slot decomposition, see Remark \ref{rem:bijective} and \cite[Section 2.2]{CS} for a constructive proof of this claim. We also note that, in fact, their reconstruction algorithm only depends on the value of $\zeta$ and does not require the notion of solitons. Hence, the above results mean that the seat number configuration gives a new linearization method for the BBS. 

\begin{remark}
    From the relation with the rigged configuration obtained by KKR bijection shown in Section \ref{sec:KKR}, under the same assumption as Theorem \ref{thm:seat_ISM}, the above theorem can be generalized to the BBS with capacity $\ell$ as 
    \[
(T_{\ell}\zeta)_{k}(i)= \zeta_{k}(i-(k\wedge \ell)).
\]
We believe that there should be a direct proof of this linearization without using the relation with KKR bijection, but do not pursue it in this paper. 
\end{remark}

\begin{remark}
{The following is one example of a configuration that does not satisfy the assumption of Theorem \ref{thm:seat_ISM} for some $k$ and $i$.
\begin{align}
    \eta\left(x\right) = 
    \begin{dcases}
        1 \ & \ x = 1,2,3,7,8,11, 4n + 1, 4n + 2  \text{ for some } n \ge 3, \\ 
        0 \ & \ \text{otherwise}.
    \end{dcases}
\end{align}
In other words, $\eta = 111000110010(1100)^{\otimes \infty}$, see also Figure \ref{ex:not_satisfy}. In this example, we see that $0 < \left| \left\{ x \in \Z_{\ge 0} \ ; \ \xi_{1}\left(x\right) = i \right\} \right| < \infty$ for any $i \in \Z_{\ge 0}$, and $T\zeta_{1}\left(i\right) = \zeta_{1}\left(i - 1\right)$. However, for $k = 2, 3$, we have
    \begin{align}
        \left| \left\{ x \in \Z_{\ge 0} \ ; \ \xi_{2}\left(x\right) = i \right\} \right| &= 
        \begin{dcases}
            3 \ & \ i = 0,1, \\
            \infty \ & \ i = 2, \\
            0 \ & \ i \ge 3,
        \end{dcases} \\
        \left| \left\{ x \in \Z_{\ge 0} \ ; \ \xi_{3}\left(x\right) = i \right\} \right| &= 
        \begin{dcases}
            \infty \ & \ i = 0, \\
            0 \ & \ i \ge 1.
        \end{dcases}
    \end{align}
On the other hand, we see that $T\tau_{3}(1) = \infty$, and thus we get 
    \begin{align}
        T\zeta_{3}\left(3\right) = 0 \neq \zeta_{3}\left(0\right) = 1.
    \end{align}
From the soliton viewpoint, this is a situation where a 3-soliton overtakes an infinite number of 2-solitons, and thus escapes to infinity at once. 
We note that by direct computation, 
    \begin{align}
        \tau_{2}(j) = T\tau_{2}(j) = 
        \begin{dcases}
            10 \ & \ j = 1, \\
            16 + 4(j - 1) \ & \ j \ge 2,
        \end{dcases}
    \end{align}
and thus $\zeta_{2}\left(2\right) = \infty$ and $T\zeta_{2}\left(4\right) = \infty$ for this $\eta$. Hence \eqref{eq:seat_ISM} formally holds for $k = 2$.} We also note that this configuration also violates the condition described in Remark \ref{rem:infinite}, discussed later in Section \ref{subsec:relation}.

\end{remark}

\begin{figure}
    \footnotesize
    \setlength{\tabcolsep}{3pt}
    \renewcommand{\arraystretch}{2}
    \centering
    \begin{tabular}{rcccccccccccccccccccccc}
    $x$ & 0 & 1 &  2 &  3 &  4 &  5 &  6 & 7 &  8 & 9 & 10 & 11 & 12 & 13 & 14 & 15 & 16 & 17 & 18 & 19 & 20 &  $\dots$ \\
    \hline\hline $\eta(x)$ &  & 1 & 1 & 1 & 0 & 0 & 0 & 1 & 1 & 0 & 0 & 1 & 0 & 1 & 1 & 0 & 0 & 1 & 1 & 0 & 0 & $\dots$ \\ 
    \hline  
    $T\eta(x)$ &  & 0 & 0 & 0 & 1 & 1 & 1 & 0 & 0 & 1 & 1 & 0 & 1 & 0 & 0 & 1 & 1 & 0 & 0 & 1 & 1 & $\dots$ \\ 
    \hline \hline
    $\xi_{1}(x)$ & 0 & 0 & 1 & 2 & 2 & 3 & 4 & 4 & 5 & 5 & 6 & 6 & 6 & 6 & 7 & 7 & 8 & 8& 9 & 9 &10& $\dots$ \\ 
    \hline
    $\xi_{2}(x)$ & 0 & 0 & 0 & 1 & 1 & 1 & 2 & 2 & 2 & 2 & 2 & 2 & 2 & 2 & 2 & 2 & 2 & 2 & 2 & 2 & 2 & $\dots$ \\ 
    \hline
    $\xi_{3}(x)$ & 0 & 0 & 0 & 0 & 0 & 0 & 0 & 0 & 0 & 0 & 0 & 0 & 0 & 0 & 0 & 0 & 0 & 0 & 0 & 0 & 0 &$\dots$ \\ 
    \hline
    $T\xi_{1}(x)$ & 0 & 1 & 2 & 3 & 3 & 4 & 5 & 5 & 6 & 6 & 7 & 7 & 7 & 7 & 8 & 8& 9 & 9 &10 & 10 & 11 & $\dots$ \\ 
    \hline
    $T\xi_{2}(x)$ & 0 & 1 & 2 & 3 & 3 & 3 & 4 & 4 & 4 & 4 & 4 & 4 & 4 & 4 & 4 & 4 & 4 & 4 & 4 & 4 & 4 & $\dots$ \\ 
    \hline
    $T\xi_{3}(x)$ & 0 & 1 & 2 & 3 & 3 & 3 & 3 & 3 & 3 & 3 & 3 & 3 & 3 & 3 & 3 & 3 & 3 & 3 & 3 & 3 & 3 &$\dots$ \\
    \hline
    $m^{\uparrow}_{1}(x)$ & 0 & 1 & 0 & 0 & 0 & 0 & 0 & 1 & 0 & 0 & 0 & 1 & \textcolor{red}{1} & 2 & 1 & 1 & 1 & 2 & 1 & 1 & 1 & \dots \\
    \hline
    $m^{\downarrow}_{1}(x)$ & 0 & 0 & 0 & 0 & 1 & 0 & 0 & 0 & 0 & 1 & 0 & 0 & \textcolor{red}{1} & 1 & 1 & 2 & 1 & 1 & 1 & 2 & 1 & \dots \\
    \hline
    $m^{\uparrow}_{2}(x)$ & 0 & 0 & 1 & 0 & 0 & 0 & 0 & 0 & 1 & 1 & \textcolor{red}{1} & 1 & 1 & 1 & 2 & 2 & \textcolor{red}{2} & 2 & 3 & 3 & \textcolor{red}{3} & \dots \\
    \hline
    $m^{\downarrow}_{2}(x)$ & 0 & 0 & 0 & 0 & 0 & 1 & 0 & 0 & 0 & 0 & \textcolor{red}{1} & 1 & 1 & 1 & 1 & 1 & \textcolor{red}{2} & 2 & 2 & 2 & \textcolor{red}{3} &  \dots \\
    \hline
    $m^{\uparrow}_{3}(x)$ & 0 & 0 & 0 & 1 & 1 & 1 & \textcolor{red}{1} & 1 & 1 & 1 & 1 & 1 & 1 & 1 & 1 & 1 & 1 & 1 & 1 & 1 & 1 & \dots \\
    \hline
    $m^{\downarrow}_{3}(x)$ & 0 & 0 & 0 & 0 & 0 & 0 & \textcolor{red}{1} & 1 & 1 & 1 & 1 & 1 & 1 & 1 & 1 & 1 & 1 & 1 & 1 & 1 & 1 & \dots \\
    \hline
    $Tm^{\uparrow}_{1}(x)$ & 0 & 0 & 0 & 0 & 1 & 0 & 0 & 0 & 0 & 1 & 0 & 0 & \textcolor{red}{1} & 1 & 1 & 2 & 1 & 1 & 1 & 2 & 1 & \dots \\
    \hline
    $Tm^{\downarrow}_{1}(x)$ & 0 & 0 & 0 & 0 & 0 & 0 & 0 & 1 & 0 & 0 & 0 & 1 & \textcolor{red}{1} & 2 & 1 & 1 & 1 & 2 & 1 & 1 & 1 & \dots \\
    \hline
    $Tm^{\uparrow}_{2}(x)$ & 0 & 0 & 0 & 0 & 0 & 1 & 0 & 0 & 0 & 0 & \textcolor{red}{1} & 1 & 1 & 1 & 1 & 1 & \textcolor{red}{2} & 2 & 2 & 2 & \textcolor{red}{3} &  \dots \\
    \hline
    $Tm^{\downarrow}_{2}(x)$ & 0 & 0 & 0 & 0 & 0 & 0 & 0 & 0 & 1 & 1 & \textcolor{red}{1} & 1 & 1 & 1 & 2 & 2 & \textcolor{red}{2} & 2 & 3 & 3 & \textcolor{red}{3} &  \dots \\
    \hline
    $Tm^{\uparrow}_{3}(x)$ & 0 & 0 & 0 & 0 & 0 & 0 & 1 & 1 & 1 & 1 & 1 & 1 & 1 & 1 & 1 & 1 & 1 & 1 & 1 & 1 & 1 & \dots \\
    \hline
    $Tm^{\downarrow}_{3}(x)$ & 0 & 0 & 0 & 0 & 0 & 0 & 0 & 0 & 0 & 0 & 0 & 0 & 0 & 0 & 0 & 0 & 0 & 0 & 0 & 0 & 0 & \dots \\
\end{tabular}
\caption{An example of $\eta$ which does not satisfy the assumption of Theorem \ref{thm:seat_ISM} for $k \ge 2$. $\tau_{k}(j)$'s and $T\tau_{k}(j)$'s are colored by red.}\label{ex:not_satisfy}
\end{figure}

\subsection{Relationships between various linearizations}\label{subsec:relation}

The seat number configuration has a strong advantage that its relation to known linearization methods is clear, and hence it reveals equivalences between them. In Section \ref{sec:KKR}, we will see the relation to the KKR bijection, which gives a {\it sequential} construction of growing sequences of pairs of partitions and {\it riggings}, called {\it rigged configurations}, from given ball configurations, {where a rigging of a partition $\mu$ is a collection of integers assigned to each element of $\mu$.}
The term {\it sequential} means that starting from $\eta$, we will sequentially construct rigged configurations $(\mu(x),\mathbf{J}(x))$ for $x=0,1,\dots$ in such a way that $(\mu(x),\mathbf{J}(x))$ is a function of $\eta(1),\dots,\eta(x)$ and $(\mu,\mathbf{J}) = \lim_{x\to \infty}(\mu(x),\mathbf{J}(x))$ will be the rigged configuration associated {with} $\eta$. For example, the rigged configuration corresponding to the ball configuration $\left( \eta(x) \right)_{1 \le x \le 19}, \  \eta = 1100111011000110000\dots$ is given by { the partition $\mu = (4,2,2,1)$ and the rigging $\mathbf{J} = ( J_{1}, J_{2}, J_{4} ), J_{1} = (3), J_{2} = (-2, 1), J_{4} = (-4)$, where $J_{k}$ is the sequence of integers assigned to $k$'s in the partition $\mu$ ordered from the smallest to largest,} and it can be represented as follows.
\begin{equation}
	    \begin{tikzpicture}
	           \node[left] at (0,0) {$\yng(4,2,2,1)$};
	           \node[right] at (.1,.6) {$-4$};
	           \node[right] at (.2,.15) {$1$};
	           \node[right] at (.1,-.3) {$-2$};
	           \node[right] at (.2,-.75) {$3.$};
	    \end{tikzpicture}
	\end{equation} 
See also Figure \ref{fig:ex_rig} to see how $(\mu(x),\mathbf{J}(x))$ grows as $x$ changes. 

To state the claim, let $\mu(x)=(\mu_i(x))$ be the partitions obtained by the KKR bijection and $\lambda(x)=(\lambda_{k}(x))$ be the conjugate of $\mu(x)$, { i.e., $\lambda_{k}(x) := \left| \left\{ i \in \N \ ; \ \mu_{i}(x) \ge k \right\} \right|$}.
Also let $m_{k}(x) :=\lambda_{k}(x) - \lambda_{k+1}(x)$ be the multiplicity of $k$ in $\mu(x)$, $\mathbf{J}(x)=(J_k(x), k \in \N)$ be the rigging, and $p_{k}(x) := x - 2E_{k}(x)$ be the vacancy where $E_{k}(x)$ {is} the $k$-{th} energy {defined as \begin{align}
	        E_{k}(x) := \sum_{i \in \N} \min \left\{\mu_i(x),k \right\},
	    \end{align}  for any $k \in \N$ and $x \in \Z_{\ge 0}$. We note that the number of components in $J_{k}(x)$ is equal to $m_{k}(x)$.}
%Precise definitions of the above quantities are given in Section \ref{sec:KKR}. 
We also recall that the seat number configuration $\eta^{\sigma}_{k}$ and the function $m^{\sigma}_{k}$ are defined in \eqref{eq:kup_alt},\eqref{eq:kdown_alt} and \eqref{eq:updowndiff} for $k \in \N$ and $\sigma \in \left\{ \uparrow,\downarrow \right\}$. Then, we have the following relation between the quantities from the KKR bijection and those from the seat number configuration. 

\begin{proposition}[Seat-KKR]\label{prop:seat_KKR}
Suppose that $\eta \in \Omega$. For any $k \in \N$ and $x \in \Z_{\ge 0}$, we have
\begin{align}
\lambda_{k}(x) &= \sum_{y=1}^x\eta^{\uparrow}_{k}(y),  \\
p_{k}(x) &= x-2\sum_{\ell=1}^{k}\sum_{y=1}^x\eta^{\uparrow}_{\ell}(y), \label{eq:seat_energy}
\end{align}
and
\begin{align}
J_{k}(x) & =(J_{k,j}(x)=p_{k}(t_{k}(x,j)) \ ; \ j=1,\dots, m^{\uparrow}_{k}(x) ),
\end{align}
where 
\begin{align}
t_{k}(x,j):=\max\{1 \le y \le x \ ;  \ m^{\uparrow}_{k}(y)=j,\  \eta^{\uparrow}_{k}(y)=1\}.
\end{align}
\end{proposition}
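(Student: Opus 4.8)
The plan is to prove the three identities simultaneously by induction on $x$, with the \emph{interlacing Young diagrams} algorithm of Section~\ref{sec:KKR} serving as the bridge between the carrier with seat numbers and the KKR bijection. For each $x$ I attach the pair of diagrams $(\mu^{\uparrow}(x),\mu^{\downarrow}(x))$ whose $k$-th columns have heights $\lambda^{\uparrow}_{k}(x):=\sum_{y=1}^{x}\eta^{\uparrow}_{k}(y)$ and $\lambda^{\downarrow}_{k}(x):=\sum_{y=1}^{x}\eta^{\downarrow}_{k}(y)$. From the basic seat-number properties of Section~\ref{sec:seat} --- that the partial sums of $\eta^{\uparrow}_{k+1}$ never exceed those of $\eta^{\uparrow}_{k}$ (so that $m^{\uparrow}_{k}\ge 0$, consistently with \eqref{eq:updowndiff}), and that $\lambda^{\uparrow}_{k}(x)-\lambda^{\downarrow}_{k}(x)=\mathcal{W}_{k}(x)\in\{0,1\}$ by \eqref{eq:lem:1_1} --- one checks that these are genuine partitions and satisfy the interlacing condition used in that algorithm. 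Moreover, comparing with \eqref{eq:kup_alt}--\eqref{eq:kdown_alt}, going from $x$ to $x+1$ appends exactly one box to column $k_{0}$ of $\mu^{\uparrow}$ when site $x+1$ is a $(k_{0},\uparrow)$-seat, one box to column $k_{0}$ of $\mu^{\downarrow}$ when it is a $(k_{0},\downarrow)$-seat, and nothing when $x+1$ is a record. The first task is to verify that this box-appending rule is exactly the box-adding step defining the KKR bijection as it reads the letter $\eta(x+1)$, so that $\mu(x)=\mu^{\uparrow}(x)$; the identity $\lambda_{k}(x)=|\{i:\mu_{i}(x)\ge k\}|=\sum_{y=1}^{x}\eta^{\uparrow}_{k}(y)$ then follows, the middle term being the definition of the conjugate partition.

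For the energy identity \eqref{eq:seat_energy} I would show that both sides satisfy the same recursion in $x$ starting from $0$ at $x=0$. On the seat side, $\sum_{\ell=1}^{k}\sum_{y=1}^{x}\eta^{\uparrow}_{\ell}(y)$ jumps by $1$ from $x-1$ to $x$ precisely when $\eta(x)=1$ and the carrier seats the picked ball at a seat of index $\le k$, i.e. when $\sum_{\ell=1}^{k}\mathcal{W}_{\ell}(x-1)=W_{k}(x-1)<k$, using \eqref{eq:cap_seat}. This coincides with (or is readily derived from) the increment of the $k$-energy $E_{k}(x)$ expressed through the capacity-$k$ carrier as recalled in Section~\ref{sec:KKR}, so the two running sums agree; the equality $p_{k}(x)=x-2E_{k}(x)$ is then just the definition of the vacancy. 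This step is essentially bookkeeping once the recursion for $E_{k}$ has been set up.

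The delicate part is the rigging identity, and I expect it to be the main obstacle. Here one must track how the riggings that the KKR algorithm attaches to parts of size $k$ evolve as $x$ increases. The mechanism should be that when the box-adding step associated to a $(k,\uparrow)$-seat at site $y$ produces a configuration in which $m^{\uparrow}_{k}$ reaches the value $j$, the rigging of the corresponding part is reset to the current vacancy $p_{k}(y)$; the content of the formula $t_{k}(x,j)=\max\{1\le y\le x:m^{\uparrow}_{k}(y)=j,\ \eta^{\uparrow}_{k}(y)=1\}$ is exactly that $t_{k}(x,j)$ is the last such moment before $x$, after which this rigging is no longer disturbed. Making this rigorous requires (a) showing that between consecutive relevant $(k,\uparrow)$-seats the riggings of size-$k$ parts stay constant --- or change only by a global shift cancelled by the matching shift of $p_{k}$, which is what makes ``$J_{k,j}(x)$ frozen at $p_{k}(t_{k}(x,j))$'' internally consistent --- and (b) matching the labelling of size-$k$ parts by $j=1,\dots,m^{\uparrow}_{k}(x)$ used on the two sides, for which I would invoke the interlacing property together with Proposition~\ref{prop:match} to control the moments at which $m^{\uparrow}_{k}$ and $m^{\downarrow}_{k}$ coincide. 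Step (a) is the crux: it is precisely where the KKR box-insertion rule is least transparent, and the point of the interlacing-Young-diagram description is to reduce this to a local statement about a single box-addition, which can then be iterated. I would carry out the induction by treating the three cases $\eta(x+1)=1$, $\eta(x+1)=0$ non-record, and $\eta(x+1)=0$ record separately, and checking in each case that all three identities are preserved.
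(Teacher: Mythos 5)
Your plan is essentially the paper's own route: the paper likewise builds the pair of interlacing Young diagrams, proves $\lambda^{\sigma}_{k}(x)=\sum_{y=1}^{x}\eta^{\sigma}_{k}(y)$ by induction on $x$ (Proposition \ref{prop:alt}), defines the refined riggings recursively and derives the closed formula $J^{\sigma}_{k,j}(x)=p^{\sigma}_{k}(t^{\sigma}_{k}(x,j))$ (Proposition \ref{prop:alt2}, resting on the monotonicity and equality criterion for $p^{\sigma}_{k}$ in Lemma \ref{lem:alt3}), and only then identifies the whole algorithm with the KKR bijection. Your treatment of the first two identities and of the energy recursion matches the paper's.

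The one point where your sketch would stall is the claim that one can ``first'' verify that the seat-number box-appending rule agrees with the KKR insertion step, so that $\mu(x)=\mu^{\uparrow}(x)$, and then treat the riggings as a separate ``delicate part''. The KKR rule inserts into the \emph{maximal singular} row, and singularity is a condition on the rigging (the top entry of $J_{k}(x)$ equals $p_{k}(x)$), so the rule-matching is logically downstream of the rigging formula, not upstream of it; your opening remark about proving the three identities simultaneously is what rescues this, but the body of the sketch inverts the order. More importantly, the actual content of the matching --- the paper's Proposition \ref{prop:alt3}, that the maximal singular row length equals $k^{\uparrow}(x)=\sup\{k\ge 1 : \mathcal{W}_{\ell}(x)=1 \text{ for } 1\le\ell\le k\}$ --- is asserted but not argued in your sketch. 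The easy direction (a singular row of length $k$ forces seats $1,\dots,k$ to be occupied) follows from Lemma \ref{lem:alt3}; the hard direction, that the row of length $k^{*}=k^{\uparrow}(x)$ really is singular, is proved by taking $x^{*}$ to be the last $(k^{*},\uparrow)$-seat before $x$ and checking that on $(x^{*},x]$ seat $k^{*}$ stays occupied and seat $k^{*}+1$ stays empty, hence no $(\ell,\sigma)$-seats with $\ell\ge k^{*}+1$ and no records occur there, so that $\xi_{k^{*}}$ and $\sum_{\ell\le k^{*}}\mathcal{W}_{\ell}$ are unchanged and $p^{\uparrow}_{k^{*}}(x^{*})=p^{\uparrow}_{k^{*}}(x)=J^{\uparrow}_{k^{*},m^{\uparrow}_{k^{*}}(x)}(x)$. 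That computation is the crux of the whole proposition and needs to be supplied; once it is, the rest of your outline goes through as written.
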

In particular, from \eqref{eq:seat_energy} we see that $\sum_{\ell = 1}^{k}\eta^{\uparrow}_{\ell}(x)$ is the local {$k$-th} energy at $x \in \N$. Obviously, there is a direct relationship between $(k,\uparrow)$-seats and the {\it local energy function $H$} used in the crystal theory formulation of {the} BBS \cite{FOY}, see Remark \ref{rem:localenergy} for details. We emphasize that  since the rigged configuration is constructed sequentially, $\left(\mu(x), \mathbf{J}(x)\right)$ can always be defined for any $x \in \Z_{\ge 0}$ and Proposition \ref{prop:seat_KKR} is valid for any $\eta \in \Omega$, which is not necessarily in $\Omega_{<\infty}$.

In Section \ref{sec:slot}, we will establish the relation between the seat number configuration and the slot configuration. Compared to the KKR bijection, the slot configuration {defined via the algorithm in \cite{FNRW}}, denoted by $\nu(x)$, needs a {\it parallel} construction, that is, to define the value of $\nu(x)$, we need the entire information of $(\eta(y))_{y \in \N}$ (or at least $(\eta(y))_{y \in [1, x']}$ for some $x' >x$ in general). However, in this paper, we will prove that slot configuration can be constructed {\it sequentially}. In particular, we show that $\left(\eta^{\sigma}_{k}(x)\right)_{\sigma\in\{ \uparrow, \downarrow\}, k \in \N, x \in \N}$ can be considered as a sequential construction of the slot configuration. To describe the statement, let $\tilde{\xi}_{k}(x)$ be the number of $k$-slots in $[1,x]$, and $(\tilde{\zeta}_{k})_{k \in \N}=(\tilde{\zeta}_{k}(i))_{k \in \N, i \in \Z_{\ge 0}}$ be the slot configuration. For example, the slot configuration corresponding to the ball configuration $\eta = 1100111011000110000\dots$ is given as follows.
\begin{figure}[H]
        \footnotesize
        \setlength{\tabcolsep}{5pt}
        \begin{center}
        \renewcommand{\arraystretch}{2}
        \begin{tabular}{rccccccccccccccccccc}
            $x$ & 1 &  2 &  3 &  4 &  5 &  6 & 7 &  8 & 9 & 10 & 11 & 12 & 13 & 14 & 15 & 16 & 17 & 18 & 19 \\
         \hline\hline $\eta(x)$ & 1 & 1 & 0 & 0 & 1 & 1 & 1 & 0 & 1 & 1 & 0 & 0 & 0 & 1 & 1 & 0 & 0 & 0 & 0  \\
           \hline
            $\nu(x)$  & 0 & 1 & 0 & 1 & 0 & 1 & 2 & 0 & 0 & 3 & 0 & 1 & 2 & 0 & 1 & 0 & 1 & 3 & $\infty$ \\
            \hline
   \hline
   $\eta^{\uparrow}_{1}(x)$ &  1 & 0 & 0 & 0 & 1 & 0 & 0 & 0 & 1 & 0 & 0 & 0 & 0 & 1 & 0 & 0 & 0 & 0& 0\\
    \hline
    $\eta^{\downarrow}_{1}(x)$ &  0 & 0& 1 & 0 & 0 & 0 & 0 & 1 & 0 & 0 & 1 & 0 & 0 & 0 & 0 & 1 & 0 & 0& 0\\
    \hline
    $\eta^{\uparrow}_{2}(x)$ &  0 & 1 & 0 & 0 & 0 & 1 & 0 & 0 & 0 & 0 & 0 & 0 & 0 & 0 & 1 & 0 & 0 & 0& 0\\
    \hline
    $\eta^{\downarrow}_{2}(x)$ &  0 & 0 & 0 & 1 & 0 & 0 & 0 & 0 & 0 & 0 & 0 & 1 & 0 & 0 & 0 & 0 & 1 & 0& 0\\
    \hline
    $\eta^{\uparrow}_{3}(x)$ &  0 & 0 & 0 & 0 & 0 & 0 & 1 & 0 & 0 & 0 & 0 & 0 & 0 & 0 & 0 & 0 & 0 & 0& 0\\
    \hline
    $\eta^{\downarrow}_{3}(x)$ &  0 & 0 & 0 & 0 & 0 & 0 & 0 & 0 & 0 & 0 & 0 & 0 & 1 & 0 & 0 & 0 & 0 & 0& 0\\
    \hline
    $\eta^{\uparrow}_{4}(x)$ &  0 & 0 & 0 & 0 & 0 & 0 & 0 & 0 & 0 & 1 & 0 & 0 & 0 & 0 & 0 & 0 & 0 & 0& 0\\
    \hline
    $\eta^{\downarrow}_{4}(x)$ &  0 & 0 & 0 & 0 & 0 & 0 & 0 & 0 & 0 & 0 & 0 & 0 & 0 & 0 & 0 & 0 & 0 & 1 & 0 \\
           \end{tabular}
        \end{center}
        \caption{The slot configuration and the seat number configuration.}\label{fig:slot_seat}
    \end{figure}
Precise definitions of these quantities are given in Section \ref{sec:slot}. Then, we have the following relation between the quantities from the slot configuration and those from the seat number configuration. 

\begin{proposition}[Seat-Slot]\label{prop:seat_slot}
Suppose that $\eta \in \Omega_{< \infty}$. Then for any $k \in \N$ and $x \in \N$, we have the following equivalence : 
\begin{align}\label{eq:slot=seat}
    \eta^{\uparrow}_{k}(x) + \eta^{\downarrow}_{k}(x) = 1 \ \text{if and only if} \ \nu(x) = k - 1.
\end{align}
In particular, for any $k \in \N$, $i \in \Z_{\ge 0}$ and $x \in \Z_{\ge 0}$, we have $\tilde{\xi}_{k}(x) = \xi_{k}(x)$ and $\tilde{\zeta}_{k}(i) = \zeta_{k}(i)$.
\end{proposition}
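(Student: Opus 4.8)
The plan is to prove the key equivalence \eqref{eq:slot=seat} directly by induction on $x$, reasoning about the carrier with seat numbers in parallel with the TS-algorithm description of slots. Recall that a site $x$ is a $k$-slot (in the sense of \cite{FNRW}) essentially when, after removing all solitons of size $\le k-1$ from $\eta$ via the TS algorithm, $x$ becomes a ``record'' position relative to the remaining solitons of size $\ge k$; equivalently, $\nu(x)=k-1$ means that the first $k-1$ nested soliton-levels at $x$ are ``closed'' while the $k$-th is the one through which the carrier's $k$-th seat acts. First I would make precise the bridge already announced in Section \ref{subseq:seat}: the function $\xi_{k}$ counts exactly the sites that survive after deleting all $\ell$-solitons with $\ell\le k$, which on the slot side is the definition of the effective position $\tilde\xi_{k}$. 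So the real content is the pointwise statement: the unique $(k,\sigma)$-seat condition $\eta^{\uparrow}_{k}(x)+\eta^{\downarrow}_{k}(x)=1$ picks out precisely the $(k-1)$-slots.

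The key steps, in order: (1) Fix $\eta\in\Omega_{<\infty}$ and recall from Section \ref{sec:seat} (which we may assume) that $\mathcal{W}_{k}(x)-\mathcal{W}_{k+1}(x)\in\{0,1\}$ and that $m^{\uparrow}_{k},m^{\downarrow}_{k}$ have the matching-point structure of Proposition \ref{prop:match}; in particular the sites $x$ with $\mathcal{W}_{k}(x-1)>\mathcal{W}_{k}(x)$ or $\mathcal{W}_{k}(x-1)<\mathcal{W}_{k}(x)$ — i.e.\ with $\eta^{\uparrow}_{k}(x)+\eta^{\downarrow}_{k}(x)=1$ — are exactly the sites where the ``$k$-th layer'' of the carrier opens or closes. (2) Give an independent recursive description of the slot value $\nu$ using the TS algorithm restricted to a prefix $[1,x]$: reading left to right, maintain the partial soliton-matching; a new ``$1$'' at $x$ paired at nesting depth $k$ creates a $(k-1)$-slot just before it, and a ``$0$'' at $x$ closing a soliton at depth $k$ likewise. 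This is exactly the content of \eqref{eq:soliton=seat}, which we are entitled to use once the equivalence is set up, but to avoid circularity I would instead re-derive the prefix-TS recursion from scratch and match it termwise against \eqref{eq:dynamics}. (3) Carry out the induction: assuming the seat-occupation vector $(\mathcal{W}_{k}(x-1))_{k}$ encodes the state of the partial TS matching after reading $\eta(1),\dots,\eta(x-1)$ (balls currently ``open'' at each nesting depth $\leftrightarrow$ occupied seats), verify that reading $\eta(x)$ updates both sides identically, using the ``smallest free seat / smallest occupied seat'' rule on the carrier side and the ``innermost unmatched $1$'' rule of the TS algorithm on the slot side — these are the same greedy rule. (4) Conclude \eqref{eq:slot=seat}, then deduce $\tilde\xi_{k}=\xi_{k}$ by summing the indicator of ``$\nu(y)\le k-1$ or $y$ a record'' over $y\le x$ and comparing with the second displayed formula for $\xi_{k}$; finally $\tilde\zeta_{k}=\zeta_{k}$ follows from the definitions \eqref{def:zeta} of $\zeta$ together with $\tilde\xi_{k}=\xi_{k}$ and the identification of $\tau_{k}(j)$ with the $j$-th $k$-soliton location (Proposition \ref{prop:tau_soliton}, which we may assume).

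The main obstacle I expect is step (3): making rigorous the claim that the occupied-seat vector $(\mathcal{W}_{k}(x))_{k\in\N}$ is in bijective correspondence with the state of the TS matching on the prefix $[1,x]$, including the subtle point that the TS algorithm is a priori a global (parallel) procedure and its restriction to a prefix need not be the ``same'' matching one would get from all of $\eta$ — one must check that the nesting structure of already-completed solitons in $[1,x]$ is stable under appending more of $\eta$ (true precisely because solitons nest and the TS pairing is greedy from the inside out). Handling records correctly inside this induction — a record is a site where the carrier is globally empty, matching the TS ``ground level'' — is a secondary but fiddly point, since $r(x)$ enters the first expression for $\xi_{k}$ and must be shown to coincide with ``$\nu(x)=\infty$'' in the finite-configuration case. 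Once the prefix-stability of TS matching is established, the termwise comparison of the update rules is routine algebra on $\{0,1\}$-valued quantities.
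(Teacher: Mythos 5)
Your overall intuition --- that the seat numbers track the nesting depth of the TS pairing and that $\xi_{k}$ counts the sites surviving deletion of all solitons of size $\le k$ --- is correct, and your step (4), deducing $\tilde{\xi}_{k}=\xi_{k}$ and $\tilde{\zeta}_{k}=\zeta_{k}$ from \eqref{eq:slot=seat}, is essentially what the paper does (via Lemmas \ref{lem:pm_eq} and \ref{lem:repoftilde}). But step (3) is not a proof: the assertion that the carrier's ``smallest free seat / smallest occupied seat'' rule and the TS algorithm compute the same nesting structure \emph{is} the content of \eqref{eq:slot=seat}, and you assume it rather than establish it. The TS algorithm is a global, iterative procedure (repeatedly scan the whole word for the leftmost run at least as long as its predecessor); it is not defined by any left-to-right ``innermost unmatched $1$'' rule, so there is nothing yet to match termwise against \eqref{eq:dynamics}. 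Moreover the correspondence cannot be between individual balls and TS-pairs: in the running example $\eta=1100111011000110000\dots$, the ball at $x=10$ is the fourth ``$1$'' of the $4$-soliton and sits at seat $4$, yet seat $1$ at that moment is occupied by the ball from $x=9$, which belongs to a different ($1$-)soliton. The carrier state and any putative partial TS matching agree only as occupation \emph{counts} per depth, not as matchings of positions. You flag exactly this (``prefix-stability of the TS matching'') as the main expected obstacle --- but that obstacle is the theorem, and the proposal does not overcome it.

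The paper circumvents the issue by inducting not on $x$ but on the passes of the TS algorithm itself. Writing $\eta$ in run-length form, the solitons extracted first are ``connected solitons'' of the form $0^{\otimes \mathsf{m}_{i}}1^{\otimes \mathsf{m}_{i}}$ or $1^{\otimes \mathsf{n}_{i}}0^{\otimes \mathsf{n}_{i}}$; the paper checks \eqref{eq:slot=seat} directly on such a block, where both the seat numbers and the slot values are explicit, and then shows that deleting the block leaves the carrier state $\mathcal{W}_{k}$ --- hence every seat number of the remaining sites --- unchanged, because $\mathcal{W}_{k}$ takes the same value immediately before and immediately after the block. Since the slot values of the remaining sites are by definition those of the reduced configuration, the claim propagates through all iterations of the TS algorithm. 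If you wish to salvage your sequential approach, you would first need to prove, as a standalone lemma, that the TS decomposition admits an equivalent left-to-right greedy description whose depth-occupation counts obey the recursion \eqref{eq:dynamics}; that lemma is where all the work lies, and without it the induction in step (3) has nothing to induct on.
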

The reader can check the relation \eqref{eq:slot=seat} for the ball configuration $\eta = 1100111011000110000\dots$ from Figure \ref{fig:slot_seat}. Since the construction of the slot configuration requires the TS algorithm, $\nu(x)$ cannot be defined for general $\eta \in \Omega$ due to the existence of infinitely many balls, and so the above proposition is also restricted to $\Omega_{<\infty}$. In particular, if the number of records in $\eta \in \Omega$ is finite, then we may not be able to identify solitons in $\eta$, see the description of the TS algorithm given in Appendix for details. However, the seat number configuration can be defined for any $\eta \in \Omega$, and thus it can be considered as a generalization of the slot configuration.  

{Using} Propositions \ref{prop:seat_KKR} and \ref{prop:seat_slot}, {we find for the first time that the relationship between local energy and slots can be understood via the seat number configuration.} We {highlight} that KKR-bijection does not distinguish roles of $0$'s in $\eta$, but the seat number configuration and slot configuration do so via the $(k,\downarrow)$-seats and $k$-slots, respectively. In other words, the seat number configuration and slot configuration also give energy to $0$'s. On the other hand, the slot configuration does not distinguish $1$'s and $0$'s if they are both $k$-slots, while the seat number configuration distinguishes them as $(k,\uparrow)$ and $(k,\downarrow)$. By introducing such a distinction, we obtain the nontrivial relation between the dynamics of $(k,\uparrow)$ and $(k,\downarrow)$ configurations, see Proposition \ref{prop:seat_flip}. See also \cite[Proposition 1.3]{FNRW} for an equivalent claim as that of Proposition \ref{prop:seat_flip} via the language of the slots. 

From the above propositions, we have an explicit relation between the riggings of KKR-bijection and the slot configuration. In the next theorem we denote $\mathbf{J}=\displaystyle \lim_{x \to \infty}\mathbf{J}(x)$ the rigging for a configuration $\eta \in \Omega_{< \infty}$, which is well defined since $\mathbf{J}(x)$ becomes constant in $x$ eventually. The indexes of the rigging $\mathbf{J}$ will be $J_{k,j}$ for $k,j$ in a suitable range.

\begin{theorem}[KKR-slot]\label{thm:2}
Suppose that $\eta \in \Omega_{< \infty}$. Then for any $k \in \N$ and $i \in \Z_{\ge 0}$, we have
\[
\tilde{\zeta}_{k}(i)=|\{j \in \N  \ ; \ J_{k,j}=i-k\}|.
\]
\end{theorem}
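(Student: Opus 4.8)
The plan is to obtain Theorem \ref{thm:2} by composing the two bridge results, Proposition \ref{prop:seat_KKR} and Proposition \ref{prop:seat_slot}, together with Theorem \ref{thm:seat_ISM} (or rather the finite-configuration identities underlying it), so that no independent combinatorial argument about slots versus riggings is needed. Concretely, fix $\eta \in \Omega_{<\infty}$. By Proposition \ref{prop:seat_slot} we have $\tilde{\zeta}_k(i) = \zeta_k(i)$ for all $k,i$, where $\zeta_k(i)$ is the number of matching points $\tau_k(j)$ whose effective position $\xi_k(\tau_k(j))$ equals $i$, as defined in \eqref{def:zeta}. So it suffices to prove
\[
\zeta_k(i) = |\{ j \in \N \ ; \ J_{k,j} = i - k \}|.
\]

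First I would unwind the right-hand side using Proposition \ref{prop:seat_KKR}. For $\eta \in \Omega_{<\infty}$ the rigging stabilizes, so $J_{k,j} = p_k(t_k(x,j))$ for $x$ large, where $t_k(x,j) = \max\{ 1 \le y \le x \ ; \ m^{\uparrow}_k(y) = j, \ \eta^{\uparrow}_k(y) = 1\}$ and $p_k(y) = y - 2\sum_{\ell=1}^k \sum_{z=1}^y \eta^{\uparrow}_\ell(z)$. The index $j$ runs over $1,\dots,m^{\uparrow}_k$ where $m^{\uparrow}_k := \lim_x m^{\uparrow}_k(x)$. So the task reduces to a pointwise identification: I want to show that for each $j$,
\[
p_k(t_k(x,j)) = \xi_k(\tau_k(j)) - k,
\]
with $t_k(x,j)$ and $\tau_k(j)$ both computed at/beyond stabilization, and simultaneously that the index set $\{1,\dots,m^{\uparrow}_k\}$ is in bijection (via $j$) with the matching points $\tau_k(\cdot)$. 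The second point is the easier half: by the definitions \eqref{eq:updowndiff}, \eqref{eq:defoftau} and Proposition \ref{prop:match}, the matching points $\tau_k(1) < \tau_k(2) < \cdots$ are exactly the sites where both $m^{\uparrow}_k$ and $m^{\downarrow}_k$ first reach successive integer values, and in $\Omega_{<\infty}$ both these counting functions return to $0$, so there are exactly $m^{\uparrow}_k$ of them; this lines up the two index ranges.

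The core computation is the identity $p_k(t_k(x,j)) = \xi_k(\tau_k(j)) - k$. Here I would expand $\xi_k$ via its second formula, $\xi_k(y) = y - \sum_{\ell=1}^k \sum_{z=1}^y (\eta^{\uparrow}_\ell(z) + \eta^{\downarrow}_\ell(z))$, and $p_k$ via $p_k(y) = y - 2\sum_{\ell=1}^k \sum_{z=1}^y \eta^{\uparrow}_\ell(z)$, so that the claimed identity becomes a statement comparing the value of a "$\uparrow$-only" weighted count at $t_k(x,j)$ with a "$\uparrow$ and $\downarrow$" weighted count at $\tau_k(j)$, up to the additive constant $k$. The key structural input is the relation between $t_k(x,j)$ and $\tau_k(j)$: by \eqref{eq:diff1} and \eqref{eq:diff2}, between consecutive matching points the functions $m^{\uparrow}_k$ and $m^{\downarrow}_k$ oscillate but stay pinned, and $t_k(x,j)$ is the \emph{last} up-seat before the pattern settles at level $j$, while $\tau_k(j)$ is the first site where $m^{\uparrow}_k = m^{\downarrow}_k = j$. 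Using $m^{\uparrow}_k - m^{\downarrow}_k = \mathcal{W}_k - \mathcal{W}_{k+1} \in \{-1,0,1\}$ and tracking, on the segment $(t_k(x,j), \tau_k(j)]$, which seats are filled, one shows that exactly the "missing" contributions needed to convert a $\uparrow$-count into a balanced $\uparrow/\downarrow$-count are accounted for, and that the bookkeeping of records and of seats at levels $\le k$ over this segment produces precisely the shift by $k$. I would carry this out by induction on $j$, with the base case $j=1$ using $\tau_k(1)$ and the inductive step comparing the increments of both sides over $(\tau_k(j), \tau_k(j+1)]$ and over the corresponding interval between the up-seats $t_k(x,j)$ and $t_k(x,j+1)$.

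The main obstacle is this last step: precisely matching $t_k(x,j)$ with the interval structure around $\tau_k(j)$ and showing the discrepancy is exactly $-k$ uniformly. The subtlety is that $t_k(x,j)$ need not equal $\tau_k(j)$ nor any canonical point near it — it is a "rigging-flavored" extremal site, whereas $\tau_k(j)$ is "slot-flavored" — so the equality of their associated counts is where the real content lies. I expect the cleanest route is to avoid reproving this from scratch by instead invoking the linearization already in hand: both sides of the desired formula are known (by Proposition \ref{prop:seat_KKR} combined with the standard KKR time evolution, and by Theorem \ref{thm:seat_ISM}) to shift rigidly by $k$ under $T$, and one can reduce to a "ground state" representative in each $T$-orbit where solitons are far apart and the identity $t_k = \tau_k = $ rightmost site of the $k$-soliton is transparent; then $p_k$ and $\xi_k - k$ both evaluate to the soliton's position directly. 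Pushing the per-orbit identity back along the dynamics, using that $\tilde\zeta_k(i-k) = T\tilde\zeta_k(i)$ and the analogous KKR statement, then yields the theorem in full generality.
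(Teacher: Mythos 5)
Your reduction of the statement to $\zeta_k(i)=|\{j\;;\;J_{k,j}=i-k\}|$ via Propositions \ref{prop:seat_slot} and \ref{prop:seat_KKR} is exactly how the paper begins, and your identification of the index sets via Proposition \ref{prop:match} is fine. The gap is in what you call the core computation. You pose it as the pointwise identity $p_k(t_k(j))=\xi_k(\tau_k(j))-k$ and propose to prove it by an induction on $j$ "tracking increments" over $(t_k(j),\tau_k(j)]$; this is precisely where all the content lies and you do not carry it out. The paper's key observation, which you miss, makes this step almost trivial and requires no induction: since $t_k(j)$ is by definition a $(k,\uparrow)$-seat, Lemma \ref{lem:1}(i) gives $\sum_{\ell=1}^k\mathcal{W}_\ell(t_k(j))=k$, hence
\begin{equation}
\xi_k\bigl(t_k(j)\bigr)-p_k\bigl(t_k(j)\bigr)=\sum_{y=1}^{t_k(j)}\sum_{\ell=1}^{k}\bigl(\eta^{\uparrow}_{\ell}(y)-\eta^{\downarrow}_{\ell}(y)\bigr)=k,
\end{equation}
i.e.\ $J_{k,j}=\xi_k(t_k(j))-k$ with \emph{both} quantities evaluated at $t_k(j)$. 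After that, one does not need any pointwise relation between $p_k$ and $\xi_k(\tau_k(j))$; one only needs that $t_k(j)$ and $\tau_k(j)$ lie in the same slot interval $[s_k(i),s_k(i+1))$, i.e.\ $\xi_k(t_k(j))=\xi_k(\tau_k(j))$. The paper gets this from the interleaving $\tau_k(j)<t_k(j+1)\le\tau_k(j+1)$ together with Lemma \ref{lem:pm_eq}, showing that a slot boundary $s_k(i+1)$ separates consecutive $t_k$'s if and only if it separates consecutive $\tau_k$'s. Reformulating the problem so that the comparison happens at a single site is the missing idea.

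Your fallback strategy (shift both sides by $T$ using Theorem \ref{thm:KOSTY} and the FNRW linearization, then verify the identity on a sorted representative of the orbit) is a genuinely different route that could in principle work, but as written it has two problems. First, a concrete error: even for an isolated $k$-soliton it is false that $t_k=\tau_k=$ the rightmost site of the soliton; as Figure \ref{fig:onlyone} shows, $t_k(j)$ is the last $1$ of the soliton while $\tau_k(j)$ is the last $0$, and only their $\xi_k$-values coincide, so the "transparent" base case still requires the bookkeeping of records and of larger solitons to the left that you are trying to avoid. Second, it imports as black boxes both Theorem 1.4 of \cite{FNRW} and the soliton-sorting property (that every $\eta\in\Omega_{<\infty}$ reaches a well-separated state after finitely many steps), neither of which is proved in your sketch and neither of which the paper's self-contained two-page argument needs. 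As it stands the proposal identifies the right skeleton but leaves the load-bearing step unproven.
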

Theorem \ref{thm:2} means that the elements of $J_{k}$ are the effective positions of $\tau_{k}(\cdot)$ shifted by $k$, and the slot decomposition counts the total number of $\tau_{k}(\cdot)$ at the same effective position. As a direct consequence of Theorem \ref{thm:2} and the result in \cite{KOSTY} quoted as Theorem \ref{thm:KOSTY} in Section \ref{sec:KKR}, we see that the slot configuration linearizes the BBS with finite capacity BBS($\ell$). More precisely, we obtain the following theorem, which is a generalization of \cite[Theorem 1.4]{FNRW} for the case $\ell < \infty$.
\begin{theorem}\label{thm:slot_finite}
    Suppose that $\eta \in \Omega_{< \infty}$. For any $k \in \N$, $i \in \Z_{\ge 0}$ and $l \in \N \cup \{ \infty \}$, we have
    \begin{align}
        (T_{\ell}\tilde{\zeta})_{k}(i)= \tilde{\zeta}_{k}(i-(k\wedge \ell)).
    \end{align}
\end{theorem}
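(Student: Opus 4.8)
The plan is to reduce Theorem \ref{thm:slot_finite} to the already-established linearization results via the dictionary built up in Proposition \ref{prop:seat_slot} and Theorem \ref{thm:2}. First I would observe that by Proposition \ref{prop:seat_slot} the slot-configuration data $\tilde\zeta_k$ coincides with the seat-number data $\zeta_k$, so it suffices to prove the claim for $\zeta_k$, i.e.\ $(T_\ell \zeta)_k(i) = \zeta_k(i - (k\wedge\ell))$. For $\ell = \infty$ this is exactly Theorem \ref{thm:seat_ISM} (the finiteness hypothesis there is automatic on $\Omega_{<\infty}$ by the remark following it), so the real content is the case $\ell < \infty$. For that case I would invoke the relation with the KKR bijection: by Theorem \ref{thm:2} the rigging $\mathbf{J}$ records precisely the quantities $\zeta_k$ after a shift by $k$, namely $\tilde\zeta_k(i) = |\{j : J_{k,j} = i - k\}|$. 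So the statement becomes a statement about how the rigging $\mathbf{J}$ evolves under $T_\ell$.

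Next I would bring in Theorem \ref{thm:KOSTY} (the quoted result of \cite{KOSTY}), which describes how the rigged configuration transforms under the BBS$(\ell)$ dynamics: the partition $\mu$ is conserved and each rigging $J_{k,j}$ is shifted by a fixed amount depending on $k$ and $\ell$ — concretely by $\min\{k,\ell\} = k\wedge\ell$. Combining this with Theorem \ref{thm:2}: if $J_{k,j}$ denotes the rigging for $\eta$ and $J'_{k,j}$ that for $T_\ell\eta$, then $J'_{k,j} = J_{k,j} + (k\wedge\ell)$ (after re-indexing, using that $\mu$, hence the number of parts of each size, is preserved). Therefore
\begin{align}
(T_\ell\tilde\zeta)_k(i) &= |\{j : J'_{k,j} = i - k\}| = |\{j : J_{k,j} = i - k - (k\wedge\ell)\}| \notag \\
&= \tilde\zeta_k\bigl(i - (k\wedge\ell)\bigr),
\end{align}
which is the desired identity; the edge convention $\tilde\zeta_k(i) = 0$ for $i < 0$ matches the fact that riggings only exist for parts of size $k$ present in $\mu$.

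I would then check the two degenerate points. When $\ell = \infty$ one has $k\wedge\ell = k$ and the formula reproduces Theorem \ref{thm:seat_ISM}, giving a consistency check; alternatively one can just cite Theorem \ref{thm:seat_ISM} directly for that case so that the argument via \cite{KOSTY} is only needed for $\ell$ finite. One should also confirm that the hypothesis $\eta\in\Omega_{<\infty}$ guarantees $T_\ell\eta\in\Omega_{<\infty}$ (the number of balls is conserved by $T_\ell$), so that $\tilde\zeta_k$, $\nu$, and $\mathbf{J}$ are all well defined for $T_\ell\eta$ and Proposition \ref{prop:seat_slot} and Theorem \ref{thm:2} apply to it as well.

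The main obstacle is purely bookkeeping rather than conceptual: one must make sure the re-indexing of the riggings across the application of $T_\ell$ is consistent with the way $\mathbf{J}$ is indexed in Theorem \ref{thm:2} (the pair $(k,j)$ ranges over parts of size $k$, and $T_\ell$ permutes boxes within a column but not the column shape), and that the precise shift in Theorem \ref{thm:KOSTY} is indeed $k\wedge\ell$ in the normalization of the vacancy numbers $p_k(x) = x - 2E_k(x)$ used here. Once the statement of Theorem \ref{thm:KOSTY} is pinned down in these conventions, the proof is a one-line substitution. I do not expect any genuinely hard estimate; the work is in aligning notations between the KKR framework of Section \ref{sec:KKR} and the slot framework of Section \ref{sec:slot}.
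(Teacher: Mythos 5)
Your proposal is correct and follows essentially the same route as the paper: the paper's proof is exactly the one-line substitution you describe, applying Theorem \ref{thm:2} to both $\eta$ and $T_{\ell}\eta$ and inserting the shift $T_{\ell}J_{k,j}=J_{k,j}+(k\wedge\ell)$ from Theorem \ref{thm:KOSTY}. The paper does not even separate the case $\ell=\infty$; your extra consistency checks are harmless but not needed.
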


We mention some possible extensions of Proposition \ref{prop:seat_KKR} and Theorem \ref{thm:2}. In literature various extensions of the BBS have been defined and studied \cite{HHIKTT,HKT,IKT,KMP2,KOY,Takahashi_93,TTM}. One such generalization is given by the {\it multi-color} BBS with finite/infinite carrier capacity and it is known that such model can also be linearized by the KKR bijection. Nevertheless, in {the} colored setting such linearization techniques do not allow to study general hydrodynamic properties of the model in a rigorous way. To attack such probabilistic questions a linearization method more close in spirit to that of slot configurations seems to be required, yet no such result is, at this moment, available. We expect that Proposition \ref{prop:seat_KKR} and Theorem \ref{thm:2} might give a blueprint to generalize the idea of the slot/seat number configuration for multi-color BBS, and hence to carry out hydrodynamic studies of these generalized models.

Finally we note an application of Theorem \ref{thm:slot_finite} to the derivation of the generalized hydrodynamic limit (GHD limit) for the BBS($\ell$), $\ell < \infty$. In \cite{CS} the GHD limit for the BBS with infinite carrier capacity $(\ell = \infty)$ is rigorously derived, and the use of the slot decomposition is crucial in their strategy of the proof. However, the assumption $\ell = \infty$ is not necessary for most of the proof, and is only needed to use \cite[Theorem 1.4]{FNRW}, the linearization property of the slot decomposition. Therefore, combining Theorem \ref{thm:slot_finite} and the strategy in \cite{CS}, the GHD limit for the BBS($\ell$) can be also derived in a rigorous way.

{
\begin{remark}\label{rem:infinite}
    Since the main purpose of this paper is to investigate the relationships between the KKR bijection and the slot configuration, and the KKR bijection is only defined for semi-infinite sequences, we consider the BBS on $\{ 0, 1 \}^{\N}$. On the other hand, the slot configuration and the seat number configuration can be also defined for $\eta \in \left\{ 0 , 1 \right\}^{\Z}$ satisfying
        \begin{align}
            \lim_{x \to \infty} \frac{1}{x} \sum_{y = 1}^{x} \eta(y) < \frac{1}{2},  \  \lim_{x \to \infty} \frac{1}{x} \sum_{y = -x}^{-1} \eta(y) < \frac{1}{2},
        \end{align}
    and the relation \eqref{eq:slot=seat} also holds. That is, for the whole line case, the seat number configuration is also a generalization of the slot configuration, see \cite[Section 4]{S} for the construction of the seat number configuration on the whole line and the proof of an analogue of Proposition \ref{prop:seat_slot}. 
    We note that for the whole line case, since there are seats in both directions, the function $\xi_{k}$ may take values in $(-\infty, \infty)$, and an ambiguity arises as to where to assign the value of $0$ for $\xi_{k}$. 
    As a result, to describe an analogue of Theorem \ref{thm:seat_ISM}, we need an offset, which is also the case for the slot configuration, see \cite[Theorem 3.1]{FNRW} and \cite[Theorem 4.1]{S} for details.
\end{remark}
}

\section{Linearization property of the seat number configuration}\label{sec:seat}

In this section, we first state some simple observations obtained by the definition of the seat number configuration. Then, we prove Theorem \ref{thm:seat_ISM}. 

\subsection{Basic properties of the seat number configuration}

\begin{lemma}\label{lem:1}
For any $\eta \in \Omega$, the {following statements} hold : 
\begin{enumerate}
   \item[\rm{(i)}]  For any $k \in \N$, $\eta^{\uparrow}_{k}(x)=1$ implies $\sum_{y=1}^x(\eta^{\uparrow}_{\ell}(y) -\eta^{\downarrow}_{\ell}(y))=1$ for any $1 \le \ell \le k$. 
  \item[\rm{(ii)}] For any $k \in \N$, $\eta^{\downarrow}_{k}(x)=1$ implies $\sum_{y=1}^x(\eta^{\uparrow}_{\ell}(y) -\eta^{\downarrow}_{\ell}(y))=0$ for any $1 \le \ell \le k$.
   \item[\rm{(iii)}] $r(x)=1$ implies $\sum_{y=1}^x(\eta^{\uparrow}_{k}(y) -\eta^{\downarrow}_{k}(y))=0$ for any $k \in \N$.
   \end{enumerate}
    \end{lemma}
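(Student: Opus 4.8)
The plan is to work directly from the recursive definitions of $\mathcal{W}_k$ in \eqref{eq:dynamics} and of $\eta^\sigma_k$ in \eqref{eq:kup_alt}, \eqref{eq:kdown_alt}, together with the key identity \eqref{eq:lem:1_1}, namely $\mathcal{W}_k(x)=\sum_{y=1}^x(\eta^\uparrow_k(y)-\eta^\downarrow_k(y))$. In other words, the three quantities appearing on the right-hand sides of (i)--(iii) are exactly $\mathcal{W}_\ell(x)$, so the lemma is really the assertion that $\mathcal{W}_\ell(x)\in\{0,1\}$ takes the value forced by the seat-occupation picture: a ball just got seated at a seat $\ge k$ only if all lower seats are occupied, a ball just got put down from a seat $\ge k$ only if all lower seats are empty, and a record occurs only if the carrier is entirely empty.

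For part (i): suppose $\eta^\uparrow_k(x)=1$. By \eqref{eq:kup_alt} this means $\eta(x)=1$, $\mathcal{W}_k(x-1)=0$, and $\prod_{\ell=1}^{k-1}\mathcal{W}_\ell(x-1)=1$, i.e. $\mathcal{W}_\ell(x-1)=1$ for all $1\le \ell\le k-1$. Feeding $\eta(x)=1$ into \eqref{eq:dynamics}: for $\ell\le k-1$ we have $\mathcal{W}_\ell(x-1)=1$, so the ``pick up'' term vanishes (factor $1-\mathcal{W}_\ell(x-1)$) and the ``put down'' term vanishes (factor $1-\eta(x)$), hence $\mathcal{W}_\ell(x)=\mathcal{W}_\ell(x-1)=1$; for $\ell=k$ the ``pick up'' term is $1\cdot(1-0)\cdot 1=1$ and the ``put down'' term vanishes since $\eta(x)=1$, so $\mathcal{W}_k(x)=0+1=1$. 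Thus $\mathcal{W}_\ell(x)=1$ for all $1\le\ell\le k$, which by \eqref{eq:lem:1_1} is exactly $\sum_{y=1}^x(\eta^\uparrow_\ell(y)-\eta^\downarrow_\ell(y))=1$. Part (ii) is the mirror image: $\eta^\downarrow_k(x)=1$ gives via \eqref{eq:kdown_alt} that $\eta(x)=0$, $\mathcal{W}_k(x-1)=1$, and $\mathcal{W}_\ell(x-1)=0$ for all $1\le\ell\le k-1$; plugging $\eta(x)=0$ into \eqref{eq:dynamics} shows $\mathcal{W}_\ell(x)=0$ for $\ell\le k-1$ (both correction terms vanish) and $\mathcal{W}_k(x)=1-1=0$, so $\mathcal{W}_\ell(x)=0$ for all $1\le\ell\le k$, i.e. the claimed sum is $0$.

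For part (iii): recall from the text that $x$ is a record precisely when $\eta(x)=0$ and no seat is occupied at $x-1$, equivalently $W_\infty(x-1)=W_\infty(x)=0$; by \eqref{eq:cap_seat} with $\ell=\infty$ this forces $\mathcal{W}_\ell(x-1)=0$ for every $\ell\in\N$, and then each correction term in \eqref{eq:dynamics} vanishes (the ``pick up'' term because $\eta(x)=0$ and also because $\mathcal{W}_\ell(x-1)=0$ for $\ell\ge 1$ would still need a full prefix, while more simply $\eta(x)=0$ kills it; the ``put down'' term because $\mathcal{W}_\ell(x-1)=0$). Hence $\mathcal{W}_\ell(x)=0$ for all $\ell$, and \eqref{eq:lem:1_1} again gives $\sum_{y=1}^x(\eta^\uparrow_k(y)-\eta^\downarrow_k(y))=0$. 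Alternatively one can avoid invoking $W_\infty$ and argue purely recursively, showing by induction on $x$ that $r(x)=1$ forces all $\mathcal{W}_\ell(x)=0$; but the characterization of records already stated in the paper makes the direct route shorter.

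There is no real obstacle here: the statement is essentially an unpacking of the definitions, and the only thing to be careful about is bookkeeping the vanishing of the two product/correction terms in \eqref{eq:dynamics} in each of the three cases, and correctly using $\mathcal{W}_\ell(x-1)$ versus $\mathcal{W}_\ell(x)$. The mildly delicate point, if any, is part (iii): one must make sure the definition of ``record'' being used really does imply $\mathcal{W}_\ell(x-1)=0$ for all $\ell$ and not merely for $\ell$ up to some bound, which is why it is cleanest to phrase it through \eqref{eq:cap_seat} with $\ell=\infty$, since $W_\infty(x-1)=0$ is a sum of non-negative terms $\mathcal{W}_\ell(x-1)$ and hence forces each to vanish.
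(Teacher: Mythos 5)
Your proof is correct and follows essentially the same route as the paper: both reduce (i)--(iii) via \eqref{eq:lem:1_1} to the statements $\mathcal{W}_{\ell}(x)=1$ (resp.\ $=0$) for $1\le\ell\le k$, verify these by unpacking \eqref{eq:kup_alt}, \eqref{eq:kdown_alt} and the recursion \eqref{eq:dynamics}, and handle (iii) through the characterization $r(x)=1 \Leftrightarrow W_{\infty}(x-1)=W_{\infty}(x)=0$ with $W_{\infty}(x)=\sum_{k}\mathcal{W}_{k}(x)$. Your version is merely more explicit about which factor kills each correction term; there is no substantive difference.
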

\begin{proof}
    From \eqref{eq:lem:1_1}, it is sufficient to show the {following statements} :
        \begin{enumerate}
   \item[\rm{(i)}']  For any $k \in \N$, $\eta^{\uparrow}_{k}(x)=1$ implies $\mathcal{W}_{\ell}(x) = 1$ for any $1 \le \ell \le k$. 
  \item[\rm{(ii)}']  For any $k \in \N$, $\eta^{\downarrow}_{k}(x)=1$ implies $\mathcal{W}_{\ell}(x) = 0$ for any $1 \le \ell \le k$.
   \item[\rm{(iii)}'] $r(x)=1$ implies $\mathcal{W}_{k}(x) = 0$ for any $k \in \N$.
   \end{enumerate}
   We prove them one by one.
    \begin{enumerate}
        \item[\rm{(i)}'] Assume that $\eta^{\uparrow}_{k}(x)=1$. Then, from the update rule of $\mathcal{W}(\cdot)$, the seats of No.$\ell$ for $1\le \ell \le k$ are all occupied at $x$. In {formulas}, from \eqref{eq:kup_alt} we have 
            \begin{align}
                 \eta(x) = 1, \quad \mathcal{W}_{k}(x-1) = 0, \quad  \prod_{\ell = 1}^{k - 1}\mathcal{W}_{\ell}(x-1) = 1, 
            \end{align}
        and thus from \eqref{eq:dynamics} we obtain $\mathcal{W}_{\ell}(x) = 1$ for any $1 \le \ell \le k$. 
        \item[\rm{(ii)}'] Assume that $\eta^{\downarrow}_{k}(x)=1$. Then, from the update rule of $\mathcal{W}(\cdot)$, the seats of No.$\ell$ for $1\le \ell \le k$ are all empty at $x$. In {formulas}, from \eqref{eq:kdown_alt} we have 
            \begin{align}
                \eta(x) = 0, \quad \mathcal{W}_{k}(x-1) = 1, \quad \prod_{\ell=1}^{k-1}(1-\mathcal{W}_{\ell}(x-1)) = 1, 
            \end{align}
        and thus from \eqref{eq:dynamics} we obtain $\mathcal{W}_{\ell}(x) = 0$ for any $1 \le \ell \le k$.
        \item[\rm{(iii)}'] Assume that $r(x)=1$. Then the seat of No.$k$ for $k \in \N$ are all empty at $x$. In {formulas}, $r(x)=1$ if and only if $W_{\infty}(x-1)=W_{\infty}(x)=0$ where $W_{\infty}(x)=\sum_{k \in \N}\mathcal{W}_k(x)$, and thus we have $\mathcal{W}_{k}(x) = 0$ for any $k \in \N$.
        
    \end{enumerate}
\end{proof}

The next proposition is crucial {for understanding} the dynamics of the BBS. 
\begin{proposition}\label{prop:seat_flip}
For any $\eta \in \Omega$, $x \in \N$ and $k \in \N$,
\begin{align}\label{eq:flip}
    \eta^{\downarrow}_{k}(x) &= T\eta^{\uparrow}_{k}(x).
\end{align}
In addition, if $\eta^{\uparrow}_{k}(x) = 1$ then we have
\begin{align}\label{eq:flip2}
    \sum_{\ell \ge k}  T\eta^{\downarrow}_{\ell}(x) + Tr(x) = 1.
\end{align}
\end{proposition}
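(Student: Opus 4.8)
The plan is to run the carrier of $\eta$ alongside the carrier of $T\eta$ and compare their occupancies site by site. In the paper's notation $T\mathcal{W}_k$ is the value of the function $\mathcal{W}_k$ at the configuration $T\eta$, i.e.\ the occupancy of seat $k$ for the $T\eta$-carrier, and applying \eqref{eq:kup_alt}, \eqref{eq:kdown_alt} to $T\eta$ gives
\begin{align}
T\eta^{\uparrow}_{k}(x) &= \mathbf{1}_{\{T\mathcal{W}_{k}(x) > T\mathcal{W}_{k}(x-1)\}}, \\
\eta^{\downarrow}_{k}(x) &= \mathbf{1}_{\{\mathcal{W}_{k}(x) < \mathcal{W}_{k}(x-1)\}}.
\end{align}
Thus \eqref{eq:flip} asserts that a ball leaves seat $k$ of the $\eta$-carrier at $x$ exactly when a ball enters seat $k$ of the $T\eta$-carrier at $x$. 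A preliminary identity I would record first, combining $T\eta(x) = \eta(x) + W_{\infty}(x-1) - W_{\infty}(x)$ with \eqref{eq:cap_seat}, \eqref{eq:lem:1_1} and $\sum_{k}\eta^{\uparrow}_{k}(x) = \eta(x)$, is $T\eta(x) = \sum_{k}\eta^{\downarrow}_{k}(x)$; in particular $\eta(x)=1$ forces $T\eta(x)=0$, and $T\eta(x)=1$ holds precisely when the $\eta$-carrier puts a ball down at $x$.

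The core of the argument is a complementarity invariant, proved by induction on $x$. Writing $h(x) := \max\{k : \mathcal{W}_{k}(x) = 1\}$ for the top occupied seat of the $\eta$-carrier (with $h(x)=0$ when it is empty), I claim that for every $x$ the two carriers are disjoint, $\mathcal{W}_{k}(x)\,T\mathcal{W}_{k}(x) = 0$, and complementary on the block governed by $\eta$, namely $T\mathcal{W}_{k}(x) = 1 - \mathcal{W}_{k}(x)$ for all $1 \le k \le h(x)$. Granting this, \eqref{eq:flip} is immediate: if $\eta(x)=1$ or the $\eta$-carrier is empty at $x-1$, both sides vanish (using $T\eta(x)=0$, respectively the put-down identity); and if $\eta(x)=0$ with smallest occupied $\eta$-seat $s$ at $x-1$, the invariant at $x-1$ shows seats $1,\dots,s-1$ are occupied and seat $s$ empty in the $T\eta$-carrier, so $s$ is its smallest empty seat and $\eta^{\downarrow}_{s}(x)=T\eta^{\uparrow}_{s}(x)=1$ with all other seats giving $0$.

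For the induction step I would split on $\eta(x)$ and on whether the $\eta$-carrier is empty at $x-1$; in each case both carriers change occupancy at a single common seat. When $\eta(x)=1$ the $\eta$-carrier fills its lowest empty seat $p$ (which satisfies $p \le h(x-1)+1$), while $T\eta(x)=0$ forces the $T\eta$-carrier only to release its lowest occupied seat or idle; the invariant at $x-1$ forces $T\mathcal{W}_{p}(x)=0$, so seat $p$ becomes complementary. When $\eta(x)=0$ with the $\eta$-carrier nonempty and lowest occupied seat $s$, it releases seat $s$ while $T\eta(x)=1$ makes the $T\eta$-carrier fill its lowest empty seat, which the invariant identifies as the same $s$. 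When $\eta(x)=0$ and the $\eta$-carrier is empty, the invariant at $x$ is vacuous and disjointness is trivially preserved. For the second statement \eqref{eq:flip2}, assume $\eta^{\uparrow}_{k}(x)=1$; then $\eta(x)=1$, so $T\eta(x)=0$, and \eqref{eq:record} applied to $T\eta$ gives $Tr(x)+\sum_{\ell}T\eta^{\downarrow}_{\ell}(x)=1-T\eta(x)=1$. Since $\eta^{\uparrow}_{k}(x)=1$ means seats $1,\dots,k-1$ are occupied in the $\eta$-carrier at $x-1$, complementarity gives $T\mathcal{W}_{\ell}(x-1)=0$ for $\ell<k$, so any ball released by the $T\eta$-carrier at $x$ sits at a seat $\ge k$; hence $\sum_{\ell<k}T\eta^{\downarrow}_{\ell}(x)=0$ and \eqref{eq:flip2} follows.

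I expect the delicate point to be the exact scope of the invariant. Above $h(x)$ the $T\eta$-carrier genuinely violates complementarity: after a record the $\eta$-carrier empties while the $T\eta$-carrier may still hold ``floating'' balls at high seats with empty seats beneath them, so the invariant must be restricted to $\{1,\dots,h(x)\}$ and be vacuous whenever the $\eta$-carrier is empty. The real work is to verify that this restricted statement is self-propagating — in particular that, when the $\eta$-carrier picks up after being empty or raises its top seat, the single seat being filled is complementary in the $T\eta$-carrier using only the invariant's information about seats $\le h(x-1)$ together with $T\eta(x)=0$. This is precisely what renders the otherwise uncontrolled high-seat behaviour of the $T\eta$-carrier irrelevant to \eqref{eq:flip} and \eqref{eq:flip2}.
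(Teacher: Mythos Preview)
Your argument is correct and is essentially the paper's proof in different clothing: the paper sets $\tilde{\mathcal W}_k:=1-T\mathcal W_k$, observes that $\tilde{\mathcal W}$ obeys the carrier recursion for $\tilde\eta:=1-T\eta$ (which agrees with $\eta$ away from records), and then proves the domination $\tilde{\mathcal W}_k\ge\mathcal W_k$ together with the coincidence $\tilde{\mathcal W}_\ell=\mathcal W_\ell$ for $\ell\le k$ on any interval that starts at a $(k,\uparrow)$-seat and contains no record. Translated back via $T\mathcal W_k=1-\tilde{\mathcal W}_k$, these two lemmas are exactly your disjointness $\mathcal W_k\,T\mathcal W_k=0$ and your complementarity $T\mathcal W_k=1-\mathcal W_k$ on $\{1,\dots,h(x)\}$, and the deductions of \eqref{eq:flip} and \eqref{eq:flip2} then proceed the same way in both treatments.
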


The proof of Proposition \ref{prop:seat_flip} is in the next subsection. 

\begin{remark}\label{rem:conserv}

    By Lemma \ref{lem:1} (iii) and \eqref{eq:flip}, we see that if $r(x) = 1$ then we have
        \begin{align}\label{eq:conservation}
            \sum_{y = 1}^{x} \eta^{\uparrow}_{k}(y) = \sum_{y = 1}^{x} \eta^{\downarrow}_{k}(y) = \sum_{y = 1}^{x} T\eta^{\uparrow}_{k}(y), 
        \end{align}
    for any $k \in \N$. In particular, under the assumption $\eta \in \Omega_{< \infty}$, we have 
        \begin{align}
            \sum_{x \in \N} \eta^{\uparrow}_{k}(x) = \sum_{x \in \N} \eta^{\downarrow}_{k}(x) = \sum_{x \in \N} T\eta^{\uparrow}_{k}(x) = \sum_{x \in \N} T\eta^{\downarrow}_{k}(x)
        \end{align}
    since $x$ must be a record of $\eta$ and $T\eta$ if $x$ is sufficiently large. Hence, the total number of $(k,\sigma)$-seats is conserved in time for each $k \in \N$ and $\sigma\in\{ \uparrow, \downarrow\}$. When $\sum_{x \in \N} \eta(x) = \infty$, the above conservation law does not necessarily hold. 
    
\end{remark}

\begin{remark}
  Relation \eqref{eq:flip} is essentially equivalent to Proposition 1.3 of \cite{FNRW}, but generalized to configurations with infinitely many balls.
\end{remark}

     \subsection{Proof of Proposition \ref{prop:seat_flip}}
     First note that if $(\mathcal{W}_k)_k$ is the carrier with seat numbers for the configuration $\eta$, then $(1-\mathcal{W}_k)_k$ is the carrier with seat numbers for the configuration $1-\eta$, namely $(1-\mathcal{W}_k)_k$ satisfies the equation (\ref{eq:dynamics}) for $1-\eta$, but with the boundary condition $(1-\mathcal{W}_k)(0)=1$ for all $k \in \N$. Now, let $\tilde{\eta}=1-T\eta$ and $\tilde{\mathcal{W}}_{k}=1-T\mathcal{W}_{k}$. Then,  $\tilde{\mathcal{W}}= (\tilde{\mathcal{W}}_{k}) $ is the carrier with seat numbers for the configuration $\tilde{\eta}$ with the boundary condition $\tilde{\mathcal{W}}_{k}(0)=1$ for all $k \in \N$. More precisely,  $\tilde{\mathcal{W}}= (\tilde{\mathcal{W}}_{k}) $ satisfies the equation (\ref{eq:dynamics}) for $\tilde{\eta}$. Moreover, {from \eqref{eq:rec_T},}
     \begin{align}
    \tilde{\eta}(x)=     \begin{cases}
    \eta(x) \quad &\text{if} \quad r(x)=0, \\
    1-\eta(x) \quad &\text{if} \quad r(x)=1.
    \end{cases}
     \end{align}
   Then, \eqref{eq:flip} is equivalent to the claim that
   \begin{equation} \label{eq:W_difference}
       \tilde{\mathcal{W}}_{k}(x)-\tilde{\mathcal{W}}_{k}(x-1) =-1
   \end{equation}
   if and only if $\eta^{\downarrow}_{k}(x)=1$. To prove this, we first prove that $\tilde{\mathcal{W}}_{k}$ dominates $\mathcal{W}_{k}$.
   
   \begin{lemma}\label{lem:monotone}
  For any $x \in \Z_{\ge 0}$ and $k \in \N$,    \[\tilde{\mathcal{W}}_{k}(x) \ge \mathcal{W}_{k}(x).
  \]
   \end{lemma}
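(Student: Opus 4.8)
The plan is to prove the inequality $\tilde{\mathcal{W}}_{k}(x) \ge \mathcal{W}_{k}(x)$ by induction on $x$, for all $k$ simultaneously. Since both sides vanish at $x=0$ with $\mathcal{W}_k(0)=0$ (and $\tilde{\mathcal W}_k(0)=1-T\mathcal W_k(0)=1$, so actually $\tilde{\mathcal W}_k(0)=1\ge 0$ — the base case is trivial), it suffices to understand how the difference can decrease as $x$ increments. The only way for $\tilde{\mathcal{W}}_{k}(x) - \mathcal{W}_{k}(x)$ to become negative is for $\mathcal{W}_{k}$ to go up (a $(k,\uparrow)$-seat at $x$ for $\eta$) while $\tilde{\mathcal{W}}_{k}$ does not go up, or for $\tilde{\mathcal{W}}_{k}$ to go down (a $(k,\downarrow)$-seat at $x$ for $\tilde\eta$) while $\mathcal{W}_{k}$ does not go down. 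So the heart of the argument is to rule out, under the inductive hypothesis $\tilde{\mathcal{W}}_{\ell}(x-1) \ge \mathcal{W}_{\ell}(x-1)$ for all $\ell$, the two bad transitions; combined with the fact that $\mathcal W_k,\tilde{\mathcal W}_k$ change by at most $1$ at each step, this gives $\tilde{\mathcal W}_k(x)\ge\mathcal W_k(x)$.

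Concretely, first I would record from \eqref{eq:dynamics} and \eqref{eq:kup_alt}, \eqref{eq:kdown_alt} the explicit conditions: $\mathcal{W}_{k}(x) > \mathcal{W}_{k}(x-1)$ iff $\eta(x)=1$, $\mathcal{W}_{k}(x-1)=0$, and $\mathcal{W}_{\ell}(x-1)=1$ for all $\ell<k$; and $\mathcal{W}_{k}(x) < \mathcal{W}_{k}(x-1)$ iff $\eta(x)=0$, $\mathcal{W}_{k}(x-1)=1$, and $\mathcal{W}_{\ell}(x-1)=0$ for all $\ell<k$ (and analogously for $\tilde{\mathcal W}$ with $\tilde\eta$). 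Then I would case-split on whether $x$ is a record of $\eta$. If $r(x)=1$, then $\tilde\eta(x)=1-\eta(x)$ and, by Lemma \ref{lem:1}(iii)', $\mathcal W_\ell(x)=0$ for all $\ell$, so $\mathcal W_k(x)=0\le\tilde{\mathcal W}_k(x)$ and we are done at $x$. If $r(x)=0$, then $\tilde\eta(x)=\eta(x)$, and one analyzes the two potentially bad cases using the inductive hypothesis: e.g. if $\mathcal W_k$ goes up at $x$ then $\mathcal W_\ell(x-1)=1$ for all $\ell<k$, hence $\tilde{\mathcal W}_\ell(x-1)=1$ for all $\ell<k$ as well; if moreover $\tilde{\mathcal W}_k(x-1)=0$ we get that $\tilde{\mathcal W}_k$ also goes up (since $\tilde\eta(x)=\eta(x)=1$), while if $\tilde{\mathcal W}_k(x-1)=1$ then $\tilde{\mathcal W}_k(x)\ge\tilde{\mathcal W}_k(x-1)-0$... — here one must check $\tilde{\mathcal W}_k$ does not drop, which it cannot since $\tilde\eta(x)=1$. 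Symmetrically for the case $\tilde{\mathcal W}_k$ drops. In every sub-case the inequality is preserved.

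The main obstacle I anticipate is bookkeeping the interaction between the "all lower seats occupied/empty" product conditions and the inductive hypothesis across the record/non-record dichotomy — in particular making sure that when $r(x)=0$ the relevant product $\prod_{\ell<k}\mathcal W_\ell(x-1)$ (resp. $\prod_{\ell<k}(1-\mathcal W_\ell(x-1))$) transfers correctly to $\tilde{\mathcal W}$, which requires knowing the inequality holds for \emph{all} $\ell<k$ at step $x-1$, not just for $k$; this is why the induction must be run over all $k$ at once. It may also be cleanest to prove simultaneously the companion fact that $\tilde{\mathcal W}_k(x)-\mathcal W_k(x)\in\{0,1\}$ (never larger), which tightens the induction and makes the case analysis close more smoothly; this auxiliary bound should follow from the same transition analysis together with \eqref{eq:diff1}-type observations. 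Once Lemma \ref{lem:monotone} is in hand, \eqref{eq:W_difference} and hence \eqref{eq:flip} will follow by identifying exactly when the gap $\tilde{\mathcal W}_k-\mathcal W_k$ changes.
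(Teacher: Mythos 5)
Your proposal is correct and follows essentially the same route as the paper: induction on $x$ over all $k$ simultaneously, splitting on whether $x$ is a record, and using the inductive hypothesis to transfer the ``all lower seats occupied (resp.\ empty)'' conditions from $\mathcal{W}$ to $\tilde{\mathcal{W}}$ (resp.\ from $\tilde{\mathcal{W}}$ to $\mathcal{W}$) so that the only possible changes preserve the inequality. The auxiliary bound $\tilde{\mathcal{W}}_k(x)-\mathcal{W}_k(x)\in\{0,1\}$ you mention is not needed and is not used in the paper's argument.
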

   \begin{proof}
        We prove it by induction on $x$. For $x=0$, the inequality holds since $\tilde{\mathcal{W}}_{k}(0)=1$ and $\mathcal{W}_{k}(0)=0$ for $k \in \N$.
        Suppose 
        \[\tilde{\mathcal{W}}_{k}(x-1) \ge \mathcal{W}_{k}(x-1),
\]
for all $k \in \N$. If $r(x)=1$, $\mathcal{W}_{k}(x-1)=\mathcal{W}_{k}(x)=0$ for all $k \in \N$, so 
\[\tilde{\mathcal{W}}_{k}(x) \ge \mathcal{W}_{k}(x)
\]
holds for all $k \in \N$. If $r(x)=0$, then $\eta(x)=\tilde{\eta}(x)$. If $\eta(x)=\tilde{\eta}(x)=1$, then $\eta^{\uparrow}_{k^*}(x)=1$ for some $k^* \in \N$. Therefore, $\mathcal{W}_{k}(x-1)=1$ for all $1 \le k < k^*$ and so $\tilde{\mathcal{W}}_{k}(x-1)=1$ by the induction assumption. This implies that  $\tilde{\mathcal{W}}_{k^*}(x)=1$ holds for both cases $\tilde{\mathcal{W}}_{k^*}(x-1)=0$ or $1$. Hence, 
\[
\tilde{\mathcal{W}}_{k^*}(x) =\mathcal{W}_{k^*}(x)=1
\]
and for $k \neq k^*$,
\[
\tilde{\mathcal{W}}_{k}(x) \ge \tilde{\mathcal{W}}_{k}(x-1) \ge \mathcal{W}_{k}(x-1)=\mathcal{W}_{k}(x).
\]
Similarly, if $\eta(x)=\tilde{\eta}(x)=0$, then there exists $k^* \in \N$ such that $\tilde{\mathcal{W}}_{k^*}(x) -\tilde{\mathcal{W}}_{k^*}(x-1) =-1$. Then, $\tilde{\mathcal{W}}_{k}(x-1)=0$ for all $1 \le k < k^*$ and so $\mathcal{W}_{k}(x-1)=0$ by the induction assumption. Hence, $\mathcal{W}_{k^*}(x)=0$ holds for both cases $\mathcal{W}_{k^*}(x-1)=0$ or $1$. Hence, 
\[
\tilde{\mathcal{W}}_{k^*}(x) =\mathcal{W}_{k^*}(x)=0
\]
and for $k \neq k^*$,
\[
\tilde{\mathcal{W}}_{k}(x) = \tilde{\mathcal{W}}_{k}(x-1) \ge \mathcal{W}_{k}(x-1) \ge \mathcal{W}_{k}(x),
\]
which completes the inductive step.
\end{proof}
        Next, we prove that $\tilde{\mathcal{W}}_{k}$ and $\mathcal{W}_{k}$ coincide on sufficiently large intervals.
 \begin{lemma}\label{lem:coincide}
 Suppose $x' < x $, $\eta^{\uparrow}_{k}(x')=1$ and $r(y)=0$ for all $x' <  y \le x$. Then, 
 \[
\mathcal{W}_{\ell}(y) = \tilde{\mathcal{W}}_{\ell}(y) 
 \]
 for any $x' \le  y \le x$ and $1 \le \ell \le k$.
 \end{lemma}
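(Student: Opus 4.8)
The plan is to argue by induction on $y$ running from $x'$ up to $x$, keeping track of the finer statement that not only do $\mathcal{W}_\ell(y)$ and $\tilde{\mathcal{W}}_\ell(y)$ agree for $1 \le \ell \le k$, but that both carriers are in the configuration ``seats $1,\dots,k$ all occupied'' at precisely those sites where this is forced. For the base case $y = x'$: since $\eta^{\uparrow}_{k}(x')=1$, the update rule \eqref{eq:kup_alt} gives $\eta(x')=1$, $\mathcal{W}_{k}(x'-1)=0$ and $\prod_{\ell=1}^{k-1}\mathcal{W}_{\ell}(x'-1)=1$, so by \eqref{eq:dynamics} we get $\mathcal{W}_{\ell}(x')=1$ for all $1\le\ell\le k$ (this is exactly Lemma \ref{lem:1} (i)$'$). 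On the other hand $r(x')=0$ forces $\tilde{\eta}(x')=\eta(x')=1$, and combining $\tilde{\mathcal{W}}_{\ell}(x')\ge\mathcal{W}_{\ell}(x')=1$ from Lemma \ref{lem:monotone} with the trivial bound $\tilde{\mathcal{W}}_{\ell}\le 1$ yields $\tilde{\mathcal{W}}_{\ell}(x')=1=\mathcal{W}_{\ell}(x')$ for $1\le\ell\le k$.

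For the inductive step, assume $\mathcal{W}_{\ell}(y-1)=\tilde{\mathcal{W}}_{\ell}(y-1)$ for all $1\le\ell\le k$, where $x' < y \le x$; since $r(y)=0$ we have $\tilde{\eta}(y)=\eta(y)$, so both carriers see the same input bit and start from the same state on seats $1,\dots,k$. The key observation is that the update \eqref{eq:dynamics} of seat $\ell\le k$ depends only on $\eta(y)$ (equivalently $\tilde\eta(y)$) and on $\mathcal{W}_1(y-1),\dots,\mathcal{W}_\ell(y-1)$ — all of which agree between the two carriers by the induction hypothesis. Hence $\mathcal{W}_{\ell}(y)=\tilde{\mathcal{W}}_{\ell}(y)$ for every $1\le\ell\le k$, completing the induction. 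I would also remark, if needed downstream, that the hypothesis $r(y)=0$ for all $x'<y\le x$ is exactly what guarantees the two input strings $\eta$ and $\tilde\eta$ coincide on the relevant window, so no discrepancy can be injected; the role of $\eta^{\uparrow}_{k}(x')=1$ is only to pin down the common initial state $\mathcal{W}_{\ell}(x')=\tilde{\mathcal{W}}_{\ell}(x')=1$ at the left endpoint.

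The only subtlety — and the step I expect to need the most care — is making precise that the seat-$\ell$ update for $\ell\le k$ truly does not reference seats with index $>k$: reading \eqref{eq:dynamics}, the terms involve $\prod_{m=1}^{\ell-1}\mathcal{W}_m(y-1)$ and $\prod_{m=1}^{\ell-1}(1-\mathcal{W}_m(y-1))$, which only touch indices $< \ell \le k$, so this is clean; but one must also confirm the boundary conventions match up, i.e. that $\tilde{\mathcal{W}}$ genuinely satisfies \eqref{eq:dynamics} for the input $\tilde\eta$ (with the modified initial condition at $x=0$), which was established in the preceding paragraph of the text. With that in hand the induction is routine.
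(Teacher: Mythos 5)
Your proposal is correct and follows essentially the same route as the paper: both establish the common initial state $\mathcal{W}_{\ell}(x')=\tilde{\mathcal{W}}_{\ell}(x')=1$ for $1\le\ell\le k$ via Lemma \ref{lem:1}(i)$'$ combined with Lemma \ref{lem:monotone}, then use that the recursion \eqref{eq:dynamics} for seats $1,\dots,k$ is closed (each seat's update referencing only lower-indexed seats and the input bit, which coincides since $r(y)=0$ on the window). Your explicit induction on $y$ merely spells out what the paper phrases as the values being ``determined by'' the initial state and the inputs, so there is no substantive difference.
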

 \begin{proof}
 {Since} $\eta^{\uparrow}_{k}(x')=1$ implies $\mathcal{W}_{\ell}(x')=1$ for $1 \le \ell \le k${,} by Lemma \ref{lem:monotone}, {we have} $\tilde{\mathcal{W}}_{\ell}(x')=1$ for $1 \le \ell \le k$. In particular, $\mathcal{W}_{\ell}(x')=\tilde{\mathcal{W}}_{\ell}(x')$ for $1 \le \ell \le k$. Also, $r(y)=0$ for $x' <  y \le x$ implies $\eta(y)=\tilde{\eta}(y)$ for $x' <  y \le x$. Then, since $\{\mathcal{W}_{\ell}(y)\}_{ x' < y \le x, 1 \le \ell \le k}$ (resp. $\{\tilde{\mathcal{W}}_{\ell}(y)\}_{x' < y \le x, 1 \le \ell \le k}$) is determined by $\{\mathcal{W}_{\ell}(x')\}_{1 \le \ell \le k}$ and $\{\eta(y)\}_{ x' < y \le x}$ ( resp. $\{\tilde{\mathcal{W}}_{\ell}(x')\}_{1 \le \ell \le k}$ and $\{\tilde{\eta}(y)\}_{ x' < y \le x}$) through the recursive equation (\ref{eq:dynamics}), we conclude that $\mathcal{W}_{\ell}(y)=\tilde{\mathcal{W}}_{\ell}(y)$ for $x' \le y \le x$ and $1 \le \ell \le k$.
 \end{proof}
 
 \begin{proof}[Proof of Proposition \ref{prop:seat_flip}]
 We first show that $\eta^{\downarrow}_{k}(x)=1$ implies \eqref{eq:W_difference}, that is $\tilde{\mathcal{W}}_{k}(x)-\tilde{\mathcal{W}}_{k}(x-1)=-1$. Then we will prove the opposite implication.
 Suppose $\eta^{\downarrow}_{k}(x)=1$. This means 
 \[
 \mathcal{W}_{k}(x-1)=1, \quad  \mathcal{W}_{k}(x)=0.
 \]
 Let
 \[
 x':=\max\{ y \in \N \ ; \ \eta^{\uparrow}_{k}(y)=1, y <x\},
 \]
 that is the rightmost site to the left of $x$ where a ball is picked up and seated at No.$k$ seat. 
 We can also characterize $x'$ as
 \[
 x'=\min\{ y \in \N \ ; \ \mathcal{W}_{k}(z)=1 \ \text{for all} \ y \le z \le x-1\}.
 \]
 Then, it is obvious that $\eta^{\uparrow}_{k}(x')=1$, $x' <x$ and $r(y)=0$ for all $x' < y \le x$, since $r(y)=1$ implies $\mathcal{W}_{k}(y)=\mathcal{W}_{k}(y-1)=0$. Then, by Lemma \ref{lem:coincide}, \[
 \mathcal{W}_{k}(x-1)=\tilde{\mathcal{W}}_{k}(x-1)=1
 \]
 and
 \[
 \mathcal{W}_{k}(x)=\tilde{\mathcal{W}}_{k}(x)=0
 \]
 hold. In particular, \eqref{eq:W_difference} holds.
 Next, we assume \eqref{eq:W_difference} holds and prove $\eta^{\downarrow}_{k}(x)=1$. Since the relation $\tilde{\mathcal{W}}_{k}(x)-\tilde{\mathcal{W}}_{k}(x-1)=-1$ implies {$T\eta(x) = 1$ and} $\tilde{\eta}(x)=0$, we have $r(x)=0$ and $\eta(x)=0$. Hence, there exists $k^* \ge 1$ such that $\eta^{\downarrow}_{k^*}(x)=1$. Then, by the first part of this proof, this implies 
 \[
 \tilde{\mathcal{W}}_{k^*}(x)-\tilde{\mathcal{W}}_{k^*}(x-1)=-1,
 \]
 which means $k=k^*$, and so $\eta^{\downarrow}_{k}(x)=1$. 
 
Finally, we prove \eqref{eq:flip2}. {If $\eta^{\uparrow}_{k}(x) = 1$, then $T\eta(x) = 0$ and thus we have 
    \begin{align}
        \sum_{\ell \in \N} T\eta^{\downarrow}_{\ell}(x) + Tr(x) = 1.
    \end{align}
    Hence, it is sufficient to show that $\eta^{\uparrow}_{k}(x) = 1$ implies $T\eta^{\downarrow}_{\ell}(x) = 0$ for $1 \le \ell \le k - 1$. We observe that $T\mathcal{W}_{\ell}(x - 1)= T\mathcal{W}_{\ell}(x) = 0$ implies $T\eta^{\downarrow}_{\ell}(x) = 0$. Since $T\mathcal{W}_{\ell}(x) = 1 - \tilde{\mathcal{W}}_{\ell}(x)$, $T\mathcal{W}_{\ell}(x - 1)= T\mathcal{W}_{\ell}(x) = 0$ is equivalent to $\tilde{\mathcal{W}}_{\ell}(x - 1) = \tilde{\mathcal{W}}_{\ell}(x) = 1$. On the other hand, $\eta^{\uparrow}_{k}(x) = 1$ implies $\mathcal{W}_{\ell}(x - 1) = \mathcal{W}_{\ell}(x) = 1$ for $1 \le \ell \le k - 1$, and thus from Lemma \ref{lem:monotone} we have $\tilde{\mathcal{W}}_{\ell}(x - 1) = \tilde{\mathcal{W}}_{\ell}(x) = 1$. Therefore, $\eta^{\uparrow}_{k}(x) = 1$ implies $T\eta^{\downarrow}_{\ell}(x) = 0$ for $1 \le \ell \le k  -1$.}
 \end{proof}

\subsection{Proof of Proposition \ref{prop:match}}\label{proof:match} 

    In this subsection, we will show Proposition \ref{prop:match}. First we define for any $i\in \Z_{\ge 0}$ and $k \in \N$
    \begin{equation} \label{eq:s_k(i)}
        s_{k}(i) := \min \left\{ x \in \mathbb{Z}_{\ge 0} \ ; \ \xi_{k}(x) = i \right\},
    \end{equation}
with the convention that $\min \emptyset = \infty$. Since $\xi_{k}(x+1) -\xi_{k}(x) \in \{0,1\}$, the equivalence
\begin{equation}\label{eq:xi-s}
\xi_{k}(x) = i \ \text{if and only if} \  s_{k}(i) \le x < s_{k}(i+1)
\end{equation}
holds, where $s_{k}(i+1)$ can be infinite. 
    
    Since $s_{k}(i)$ is a $(\ell,\sigma)$-seat for some $\ell > k$ and $\sigma \in \{\uparrow,\downarrow\}$ or a record, by using \eqref{eq:diff1} and Lemma \ref{lem:1}, the following result is straightforward.
        \begin{lemma}\label{lem:pm_eq}
            Suppose that $s_{k}(i) < \infty$ for some $k \in \N$ and $i \in \Z_{\ge 0}$. Then we have
                \begin{align}
                    m^{\uparrow}_{k}\left( s_{k}(i) \right) = m^{\downarrow}_{k}\left( s_{k}(i) \right). 
                \end{align}
        \end{lemma}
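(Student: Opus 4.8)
The plan is to recognise that $x := s_{k}(i)$ cannot be an $(\ell,\sigma)$-seat with $\ell \le k$, and then to evaluate $\mathcal{W}_{k}(x)$ and $\mathcal{W}_{k+1}(x)$ via Lemma \ref{lem:1}; by \eqref{eq:diff1} it then suffices to show that these two quantities coincide. I would first treat $i = 0$ separately: then $s_{k}(0) = 0$ since $\xi_{k}(0) = 0$, and $m^{\uparrow}_{k}(0) = m^{\downarrow}_{k}(0) = 0$ by definition, so the claim is trivial.

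For $i \ge 1$, I would use that $\xi_{k}(0) = 0$ and $\xi_{k}(y) - \xi_{k}(y-1) \in \{0,1\}$ for every $y$ (as recorded right after \eqref{eq:s_k(i)}), so the minimality in \eqref{eq:s_k(i)} forces $\xi_{k}(x) = \xi_{k}(x-1) + 1$ at $x = s_{k}(i)$. Since every site is, uniquely, either a record or an $(\ell,\sigma)$-seat, and $\xi_{k}$ is precisely the running count of records together with $(\ell,\sigma)$-seats having $\ell \ge k+1$, this increment means $x$ is a record, or an $(\ell,\uparrow)$-seat, or an $(\ell,\downarrow)$-seat for some $\ell \ge k+1$.

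In each of these three cases, Lemma \ref{lem:1} together with the identification \eqref{eq:lem:1_1} of $\mathcal{W}_{j}(x)$ with $\sum_{y=1}^{x}(\eta^{\uparrow}_{j}(y) - \eta^{\downarrow}_{j}(y))$ determines $\mathcal{W}_{k}(x)$ and $\mathcal{W}_{k+1}(x)$ simultaneously: a record forces $\mathcal{W}_{k}(x) = \mathcal{W}_{k+1}(x) = 0$ by (iii); an $(\ell,\uparrow)$-seat with $\ell \ge k+1$ forces $\mathcal{W}_{j}(x) = 1$ for all $1 \le j \le \ell$ by (i), hence in particular $\mathcal{W}_{k}(x) = \mathcal{W}_{k+1}(x) = 1$; and an $(\ell,\downarrow)$-seat with $\ell \ge k+1$ forces $\mathcal{W}_{j}(x) = 0$ for all $1 \le j \le \ell$ by (ii), hence $\mathcal{W}_{k}(x) = \mathcal{W}_{k+1}(x) = 0$. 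In all cases $\mathcal{W}_{k}(s_{k}(i)) = \mathcal{W}_{k+1}(s_{k}(i))$, and \eqref{eq:diff1} then gives $m^{\uparrow}_{k}(s_{k}(i)) = m^{\downarrow}_{k}(s_{k}(i))$.

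I do not anticipate any genuine obstacle: the content of the lemma is simply that $m^{\uparrow}_{k}$ and $m^{\downarrow}_{k}$ can disagree only at $(\ell,\sigma)$-seats with $\ell \le k$ — exactly the sites where $\xi_{k}$ stays flat — so the leftmost site attaining $\xi_{k} = i$ is automatically a matching site. The only step requiring mild care is the characterization of the increments of $\xi_{k}$ used in the second paragraph, which rests on the fact that each site contributes to exactly one of the counts $\eta^{\uparrow}_{\ell}$, $\eta^{\downarrow}_{\ell}$, $r$.
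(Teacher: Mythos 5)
Your proposal is correct and follows exactly the paper's argument: the paper likewise observes that $s_{k}(i)$ must be a record or an $(\ell,\sigma)$-seat with $\ell \ge k+1$, and then invokes Lemma \ref{lem:1} together with \eqref{eq:diff1} to conclude $\mathcal{W}_{k}(s_k(i)) = \mathcal{W}_{k+1}(s_k(i))$ and hence the claim. You have simply written out the case analysis that the paper declares ``straightforward.''
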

    Next, we show that the sequence $ \left(m^{\sigma}_{k}\left( s_{k}(i) \right)\right)_{i \in \N}$, $\sigma \in \{\uparrow, \downarrow\}$ is non-decreasing.   
        \begin{lemma}\label{lem:nonnegative}
            Suppose that $s_{k}(i+1) < \infty$ for some $k \in \N$ and $i \in \Z_{\ge 0}$. Then for each $\sigma\in\{ \uparrow, \downarrow\}$, we have
                \begin{align}\label{eq:lem:nonnegative}
                    m^{\sigma}_{k}\left( s_{k}(i+1) \right) - m^{\sigma}_{k}\left( s_{k}(i) \right) \ge 0.
                \end{align}
        \end{lemma}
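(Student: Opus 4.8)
The plan is to reduce the inequality to a statement about the local structure of the seat number configuration on the interval delimited by $s_{k}(i)$ and $s_{k}(i+1)$. Write $x_{0}:=s_{k}(i)$ and $x_{1}:=s_{k}(i+1)$ (both finite, since $\xi_{k}$ is non-decreasing from $0$). By \eqref{eq:xi-s} the function $\xi_{k}$ is constant equal to $i$ on $\{x_{0},\dots,x_{1}-1\}$ and jumps at $x_{1}$; hence every $y$ with $x_{0}<y<x_{1}$ is a $(\ell,\sigma')$-seat with $\ell\le k$, while $x_{1}$ is either a record or a $(\ell,\sigma')$-seat with $\ell\ge k+1$. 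Telescoping \eqref{eq:diff2} from $x_{0}$ to $x_{1}$ and using that $\eta^{\sigma}_{k+1}(y)=0$ for $x_{0}<y<x_{1}$ and $\eta^{\sigma}_{k}(x_{1})=0$, one gets the exact identity
\[
m^{\sigma}_{k}(x_{1}) - m^{\sigma}_{k}(x_{0}) = \left|\left\{ y \ ; \ x_{0} < y < x_{1}, \ y \text{ is a } (k,\sigma)\text{-seat} \right\}\right| - \mathbf{1}_{\{x_{1}\text{ is a }(k+1,\sigma)\text{-seat}\}}.
\]
So it suffices to show: if $x_{1}$ is a $(k+1,\sigma)$-seat, then there is at least one $(k,\sigma)$-seat strictly between $x_{0}$ and $x_{1}$.

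I would treat $\sigma=\uparrow$ in detail, the case $\sigma=\downarrow$ being entirely parallel with occupied and empty seats exchanged. Assume $x_{1}$ is a $(k+1,\uparrow)$-seat; by \eqref{eq:kup_alt} this means $\eta(x_{1})=1$, $\mathcal{W}_{k+1}(x_{1}-1)=0$ and $\mathcal{W}_{\ell}(x_{1}-1)=1$ for $1\le\ell\le k$. Since $\mathcal{W}_{k}(x_{1}-1)=1$ while $\mathcal{W}_{k}(0)=0$, the site $x':=\max\{y<x_{1}\ ;\ \eta^{\uparrow}_{k}(y)=1\}$ is well defined, it is a $(k,\uparrow)$-seat, and exactly as in the proof of Proposition \ref{prop:seat_flip} one has $\mathcal{W}_{k}(y)=1$ for all $x'\le y\le x_{1}-1$. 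The key claim is that $\xi_{k}$ is constant on $[x',x_{1}-1]$, i.e.\ no $y\in\{x'+1,\dots,x_{1}-1\}$ is a record or a $(\ell,\sigma')$-seat with $\ell\ge k+1$: a record at such $y$ would force $\mathcal{W}_{k}(y)=0$, and a $(\ell,\downarrow)$-seat with $\ell\ge k+1$ would force $\mathcal{W}_{k}(y-1)=0$, both contradicting $\mathcal{W}_{k}\equiv1$ on $[x',x_{1}-1]$; a $(\ell,\uparrow)$-seat with $\ell\ge k+1$ at $y$ would give $\mathcal{W}_{k+1}(y)=1$, and since $\mathcal{W}_{k+1}(x_{1}-1)=0$, seat $k+1$ must be vacated at some $z$ with $y<z\le x_{1}-1$, so $z$ is a $(k+1,\downarrow)$-seat, forcing $\mathcal{W}_{k}(z-1)=0$ — again impossible. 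Therefore $\xi_{k}(x')=\xi_{k}(x_{1}-1)=i$, hence $x'\ge s_{k}(i)=x_{0}$, and in fact $x'>x_{0}$ because $x'$ is a $(k,\uparrow)$-seat whereas $x_{0}$ is either $0$ or a record or a $(\ell,\sigma')$-seat with $\ell\ge k+1$. Thus $x'\in(x_{0},x_{1})$ is a $(k,\uparrow)$-seat, and the identity above gives $m^{\uparrow}_{k}(x_{1})-m^{\uparrow}_{k}(x_{0})\ge1-1=0$.

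For $\sigma=\downarrow$ one argues symmetrically: if $x_{1}$ is a $(k+1,\downarrow)$-seat then $\mathcal{W}_{k+1}(x_{1}-1)=1$ and $\mathcal{W}_{\ell}(x_{1}-1)=0$ for $1\le\ell\le k$. Taking $x''$ to be the last $(k+1,\uparrow)$-seat before $x_{1}$ (well defined since $\mathcal{W}_{k+1}(x_{1}-1)=1\neq0=\mathcal{W}_{k+1}(0)$) and then $w$ the last $(k,\downarrow)$-seat before $x_{1}$, one checks $x''<w\le x_{1}-1$, that $\mathcal{W}_{k}\equiv0$ on $[w,x_{1}-1]$, and — using that $\mathcal{W}_{k+1}(x_{1}-1)=1$ forces any seat $k+1$ that gets vacated in $[w,x_{1}-1]$ to be refilled, which requires $\mathcal{W}_{k}=1$ — that $\xi_{k}$ is constant on $[w,x_{1}-1]$; hence $w\in(x_{0},x_{1})$ is the desired $(k,\downarrow)$-seat. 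The main obstacle is precisely the step ruling out $(\ell,\uparrow)$-seats with $\ell\ge k+1$ inside $\{x'+1,\dots,x_{1}-1\}$ (and its $\downarrow$ counterpart): unlike records and "wrong-direction" high seats, these are not immediately incompatible with $\mathcal{W}_{k}$ being constant on the interval, and one genuinely has to bring in the occupancy of seat $k+1$ at $x_{1}-1$, which is determined exactly by the type of the site $s_{k}(i+1)$.
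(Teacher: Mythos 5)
Your proof is correct, but the second half takes a genuinely different route from the paper's. Both arguments begin identically, reducing the claim to the exact identity
\begin{align}
m^{\sigma}_{k}\left( s_{k}(i+1) \right) - m^{\sigma}_{k}\left( s_{k}(i) \right) = \sum_{y = s_{k}(i) + 1}^{s_{k}(i+1)} \eta^{\sigma}_{k}(y) - \eta^{\sigma}_{k+1}\left(s_{k}(i+1)\right),
\end{align}
so that everything hinges on producing a $(k,\sigma)$-seat strictly between $s_{k}(i)$ and $s_{k}(i+1)$ whenever $s_{k}(i+1)$ is a $(k+1,\sigma)$-seat. At that point the paper invokes Lemma \ref{lem:pm_eq} twice: once to reduce to $\sigma=\uparrow$, and once to convert the matching condition $m^{\uparrow}_{k}=m^{\downarrow}_{k}$ at both endpoints into the identity $\sum_y (\eta^{\uparrow}_{k}(y)-\eta^{\downarrow}_{k}(y)) = \eta^{\uparrow}_{k+1}(s_k(i+1)) - \eta^{\downarrow}_{k+1}(s_k(i+1))$, from which $\sum_y \eta^{\uparrow}_{k}(y) \ge 1$ drops out in two lines, with no case analysis on $\sigma$. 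You instead construct the required $(k,\sigma)$-seat explicitly: you take the last $(k,\uparrow)$-pickup $x'$ before $x_1$, show $\mathcal{W}_k\equiv 1$ on $[x',x_1-1]$, and rule out records and $(\ell,\sigma')$-seats with $\ell\ge k+1$ in $(x',x_1)$ one by one (correctly identifying that the only delicate case, high $\uparrow$-seats, needs the vacancy of seat $k+1$ at $x_1-1$). I checked the case analysis and the compressed $\downarrow$ argument; both are sound, including the well-definedness of $w$ via the earlier $(k+1,\uparrow)$-seat $x''$. What the paper's route buys is brevity and symmetry in $\sigma$ at the price of relying on the previously established matching lemma; what your route buys is a constructive statement (the witness seat is the last $(k,\sigma)$ event before $x_1$, and $\xi_k$ is flat from there to $x_1$) that gives more combinatorial insight into why the slot interval must contain such a seat, at the price of a two-case argument roughly twice as long.
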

        \begin{proof}%[Proof of Lemma \ref{lem:nonnegative}]
            From Lemma \ref{lem:pm_eq}, it is sufficient to prove the case $\sigma=\uparrow$. Observe that 
            \begin{align}
            m^{\uparrow}_{k}\left( s_{k}(i+1) \right) - m^{\uparrow}_{k}\left( s_{k}(i) \right) & = \sum_{y = s_{k}(i) + 1}^{s_{k}(i+1)}  (\eta^{\uparrow}_{k}(y) - \eta^{\uparrow}_{k+1}\left(y\right))\\ 
            & =\sum_{y = s_{k}(i) + 1}^{s_{k}(i+1)}  \eta^{\uparrow}_{k}(y) - \eta^{\uparrow}_{k+1}\left(s_{k}(i+1)\right).
             \end{align}
            From the above expression, \eqref{eq:lem:nonnegative} clearly holds for the case $\eta^{\uparrow}_{k+1}\left(s_{k}(i+1)\right) = 0$. From now on we will consider the case $\eta^{\uparrow}_{k+1}\left(s_{k} (i+1)\right) = 1$. Then, to show \eqref{eq:lem:nonnegative} it is sufficient to show 
                \begin{align}
                    \sum_{y = s_{k}(i) + 1}^{s_{k}(i+1)}  \eta^{\uparrow}_{k}(y) \ge 1.
                \end{align}
            From Lemma \ref{lem:pm_eq}, we have 
             \begin{align}
                    \sum_{y = s_{k}(i) + 1}^{s_{k}(i+1)} \left( \eta^{\uparrow}_{k}(y) - \eta^{\downarrow}_{k}(y) \right) &= \sum_{y = s_{k}(i) + 1}^{s_{k}(i+1)} \left( \eta^{\uparrow}_{k+1}(y) - \eta^{\downarrow}_{k+1}(y) \right)  \\
                    &= \eta^{\uparrow}_{k+1}\left(s_{k}(i+1)\right) - \eta^{\downarrow}_{k+1}\left(s_{k}(i+1)\right).
                \end{align}
            Hence, we obtain
                \begin{align}
                    \sum_{y = s_{k}(i) + 1}^{s_{k}(i+1)}  \eta^{\uparrow}_{k}(y) &\ge \eta^{\uparrow}_{k+1}\left(s_{k}(i+1)\right) - \eta^{\downarrow}_{k+1}\left(s_{k}(i+1)\right) \\
                    &\ge 1,
                \end{align}
        and this completes the proof.
        \end{proof}

    \begin{proof}[Proof of Proposition \ref{prop:match}] 
    From the definition of $\tau_{k}(\cdot)$ given by \eqref{eq:defoftau}, it is sufficient to show that $x \ge \tau_{k}(j)$ implies $m^{\sigma}_{k}(x) \ge j$ for each $\sigma \in \left\{\uparrow,\downarrow \right\}$. Since $m^{\sigma}_{k}$ decreases only at $(k+1,{\sigma})$-seats, it suffices to prove the following claim : for any $x \ge \tau_{k}(j)$, 
        \begin{align}
            \eta^{\uparrow}_{k+1}(x) + \eta^{\downarrow}_{k+1}(x) = 1 \ \text{implies} \ m^{\sigma}_{k}(x) \ge j \ \text{for each} \ \sigma \in \left\{\uparrow,\downarrow \right\}.
        \end{align}
    Define $x' := \min \left\{ y \ge \tau_{k}(j) \ ; \ y = s_{k}(i) ~ \text{for some} ~ i \in \N \right\}$. Note that $x' <\infty$ and in particular $x' \le x$ since $\eta_{k+1}^{\uparrow}(x)+\eta_{k+1}^{\downarrow}(x)=1$ and so $x=s_k(i')$ for some $i'$. Then, again by using the fact that $m^{\sigma}_{k}$ decreases only at $(k+1,{\sigma})$-seats, we see that either $m^{\uparrow}_{k}(x') \ge j$ or $m^{\downarrow}_{k}(x') \ge j$ holds. Thus by using Lemma \ref{lem:pm_eq} we have $m^{\uparrow}_{k}(x') = m^{\downarrow}_{k}(x') \ge j$. Then from Lemma \ref{lem:nonnegative}, we obtain $m^{\sigma}_{k}(x) \ge m^{\sigma}_{k}(x') \ge j$ for $\sigma \in \{\uparrow, \downarrow\}$. Therefore, Proposition \ref{prop:match} is proved.  
    \end{proof}
    
    We conclude this subsection by pointing out that similar argument used above yields other representations of $\zeta_{k}(i)$ defined in \eqref{def:zeta} as follows. 
        \begin{lemma}\label{lem:repofzeta}
            Suppose that $s_{k}(i+1) < \infty$ for some $k \in \N$ and $i \in \Z_{\ge 0}$. Then for each $\sigma\in\{ \uparrow, \downarrow\}$ we have
                \begin{align}
                    \zeta_{k}(i) &= m^{\sigma}_{k}\left( s_{k}(i+1) \right) - m^{\sigma}_{k}\left( s_{k}(i) \right) \\
                    &= \left|\left\{x \in \N  \ ; \  \eta^{\sigma}_{k}(x)=1, \xi_{k}(x)=i\right\}\right|  - \left|\left\{x \in \N  \ ; \ \eta^{\sigma}_{k+1}(x)=1, \xi_{k}(x)=i+1\right\}\right| \label{eq:repofzeta}.
                \end{align}
        \end{lemma}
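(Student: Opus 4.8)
The plan is to reduce both equalities to the behaviour of $m^{\sigma}_{k}$ on the ``skeleton'' sites $a:=s_{k}(i)$ and $b:=s_{k}(i+1)$; by \eqref{eq:xi-s} and the hypothesis $s_{k}(i+1)<\infty$ these are finite with $a<b$ and $\{x\in\Z_{\ge 0}:\xi_{k}(x)=i\}=\{a,a+1,\dots,b-1\}$. The basic structural fact I would use repeatedly is that, since $\xi_{k}(x)-\xi_{k}(x-1)=1-\sum_{\ell=1}^{k}\bigl(\eta^{\uparrow}_{\ell}(x)+\eta^{\downarrow}_{\ell}(x)\bigr)\in\{0,1\}$, the site $x$ is a point where $\xi_{k}$ strictly increases precisely when $x$ is a record or an $(\ell,\sigma)$-seat with $\ell\ge k+1$, and $\xi_{k}(x)=\xi_{k}(x-1)$ precisely when $x$ is an $(\ell,\sigma)$-seat with $\ell\le k$. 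In particular $b$, and also $a$ whenever $a\ge 1$, are points of strict increase, hence not $(k,\sigma)$-seats, so $\eta^{\uparrow}_{k}=\eta^{\downarrow}_{k}=0$ there and \eqref{eq:diff2} gives $m^{\sigma}_{k}(a)\le m^{\sigma}_{k}(a-1)$ and $m^{\sigma}_{k}(b)\le m^{\sigma}_{k}(b-1)$ for both $\sigma$.

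For the first equality, put $\alpha:=m^{\uparrow}_{k}(a)=m^{\downarrow}_{k}(a)$ and $\beta:=m^{\uparrow}_{k}(b)=m^{\downarrow}_{k}(b)$, which coincide by Lemma \ref{lem:pm_eq} and satisfy $\beta\ge\alpha$ by Lemma \ref{lem:nonnegative}. I would then prove the set identity
\[
\bigl\{\,j\in\N\;:\;a\le\tau_{k}(j)<b\,\bigr\}=\bigl\{\,j\in\N\;:\;\alpha<j\le\beta\,\bigr\},
\]
which by \eqref{def:zeta} immediately yields $\zeta_{k}(i)=\beta-\alpha=m^{\sigma}_{k}(s_{k}(i+1))-m^{\sigma}_{k}(s_{k}(i))$. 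For ``$\subseteq$'': $\tau_{k}(j)<b$ gives $b\ge\tau_{k}(j)$, so Proposition \ref{prop:match} yields $j\le m^{\sigma}_{k}(b)=\beta$; and $\tau_{k}(j)\ge a$ forces $j>\alpha$, trivially when $a=0$ (then $\alpha=0$) and, when $a\ge 1$, because $a-1<\tau_{k}(j)$ gives via Proposition \ref{prop:match} some $\sigma$ with $m^{\sigma}_{k}(a-1)<j$, whence $\alpha=m^{\sigma}_{k}(a)\le m^{\sigma}_{k}(a-1)<j$ by the first paragraph. For ``$\supseteq$'': $\alpha<j$ means $m^{\sigma}_{k}(a)<j$ for both $\sigma$, so Proposition \ref{prop:match} gives $\tau_{k}(j)>a$; and if $\tau_{k}(j)\ge b$ then, since $j\le\beta=m^{\sigma}_{k}(b)$ also gives $\tau_{k}(j)\le b$ by Proposition \ref{prop:match}, we would have $\tau_{k}(j)=b$, so \eqref{eq:defoftau} together with Proposition \ref{prop:match} forces $m^{\sigma}_{k}(b-1)<j$ for some $\sigma$, contradicting $m^{\sigma}_{k}(b)\le m^{\sigma}_{k}(b-1)$ and $m^{\sigma}_{k}(b)=\beta\ge j$.

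For the second equality I would telescope \eqref{eq:diff2} to get
\[
\zeta_{k}(i)=m^{\sigma}_{k}(b)-m^{\sigma}_{k}(a)=\sum_{y=a+1}^{b}\bigl(\eta^{\sigma}_{k}(y)-\eta^{\sigma}_{k+1}(y)\bigr),
\]
and then identify each sum with the corresponding count using the structural fact from the first paragraph. If $\eta^{\sigma}_{k}(y)=1$ then $y$ is a $(k,\sigma)$-seat, hence $\xi_{k}$ is constant at $y$; together with $\xi_{k}\equiv i$ on $\{a,\dots,b-1\}$ and $\eta^{\sigma}_{k}(b)=0$ this gives $\sum_{y=a+1}^{b}\eta^{\sigma}_{k}(y)=|\{x\in\N:\eta^{\sigma}_{k}(x)=1,\ \xi_{k}(x)=i\}|$. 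If $\eta^{\sigma}_{k+1}(y)=1$ then $y$ is a $(k+1,\sigma)$-seat, hence $\xi_{k}$ strictly increases at $y$; this is impossible for $y\in\{a+1,\dots,b-1\}$ since there $\xi_{k}(y)=\xi_{k}(y-1)=i$, so $\sum_{y=a+1}^{b}\eta^{\sigma}_{k+1}(y)=\eta^{\sigma}_{k+1}(b)$, and the identical argument applied to the block $\{x:\xi_{k}(x)=i+1\}$ shows that $b$ is the only $x\in\N$ with $\eta^{\sigma}_{k+1}(x)=1$ and $\xi_{k}(x)=i+1$, so $|\{x\in\N:\eta^{\sigma}_{k+1}(x)=1,\ \xi_{k}(x)=i+1\}|=\eta^{\sigma}_{k+1}(b)$ as well. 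Combining the three displays yields \eqref{eq:repofzeta}.

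The genuinely substantive step is the set identity in the second paragraph; everything else is bookkeeping with the half-open blocks $\{x:\xi_{k}(x)=i\}$ and the degenerate $i=0$ (equivalently $a=0$) case. The one place to be careful is the interplay of strict and non-strict inequalities when invoking Proposition \ref{prop:match}, and the fact that $s_{k}(i+2)$ may be infinite -- which, however, does not affect the argument, since the only site of the block $\xi_{k}=i+1$ that can carry a $(k+1,\sigma)$-seat is its left endpoint $b$.
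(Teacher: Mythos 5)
Your proof is correct and follows essentially the same route as the paper: both reduce the first equality to Proposition \ref{prop:match} and Lemma \ref{lem:pm_eq} by identifying $m^{\sigma}_{k}$ at the skeleton points $s_{k}(i)$, $s_{k}(i+1)$ with the count of $\tau_{k}$'s to their left (the paper does this cumulatively via $j^{*}=\sum_{j\le i}\zeta_{k}(j)$, you do it block by block, which is only a cosmetic difference), and both then telescope \eqref{eq:diff2} and use that $(k+1,\sigma)$-seats can only sit at points where $\xi_{k}$ strictly increases to obtain \eqref{eq:repofzeta}.
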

        \begin{proof}[Proof of Lemma \ref{lem:repofzeta}]
            
            Let $j^*=\max\{j \in \N ; \ \tau_k(j) < s_k(i+1)\}$ with the convention that $\max \emptyset =0$. {Then, we have 
            \begin{align}
                j^* &= \sum_{j = 0}^{i} \left| \left\{ h \in \N \ ; \ \tau_k(h) \in \left\{ x \in \Z_{\ge 0} \ ; \ \xi_{k}(x) = j \right\} \right\} \right| = \sum_{j = 0}^{i} \zeta_{k}(j).
            \end{align}
            On the other hand, since $\tau_k(j^*) < s_k(i+1) < \tau_k(j^*+1)$, from \eqref{eq:defoftau} and from Proposition \ref{prop:match}, we get  
                \begin{align}
                    \min\left\{ m^{\uparrow}_{k}\left( s_{k}(i+1) \right), m^{\downarrow}_{k}\left( s_{k}(i+1) \right) \right\} = j^*. 
                \end{align}}
            Hence, {from the above and} Lemma \ref{lem:pm_eq}, for each $\sigma \in \{\uparrow, \downarrow\}$ we have
                \begin{align}
                    m^{\sigma}_{k}\left( s_{k}(i+1) \right) =  \min\left\{ m^{\uparrow}_{k}\left( s_{k}(i+1) \right), m^{\downarrow}_{k}\left( s_{k}(i+1) \right) \right\} = j^* =\sum_{j = 0}^{i} \zeta_{k}(j),
                \end{align}
            and thus we obtain 
                \begin{align}
                    \zeta_{k}(i) &= m^{\sigma}_{k}\left( s_{k}(i+1) \right) - m^{\sigma}_{k}\left( s_{k}(i) \right) \\
                    &= \sum_{x = s_{k}(i) + 1}^{s_{k}(i+1)} \eta^{\sigma}_{k}(x) - \eta^{\sigma}_{k+1}\left(s_{k}(i+1)\right) \\
                    &= \left|\left\{x \in \N \ ; \ \eta^{\sigma}_{k}(x)=1, \xi_{k}(x)=i\right\}\right|  - \left|\left\{x \in \N \ ; \ \eta^{\sigma}_{k+1}(x)=1, \xi_{k}(x)=i+1\right\}\right|.
                \end{align}
        \end{proof}

\subsection{Proof of Theorem \ref{thm:seat_ISM}}

In this subsection, we give the proof of Theorem \ref{thm:seat_ISM}. 
First, we prove that the difference between $\xi_{k}$ and $T\xi_{k}$ is constant under a certain condition.

\begin{lemma}\label{lem:Txi}
    For any $k \in \N$ and $x \in \Z_{\ge 0}$, we have
        \begin{align}\label{eq:Txi_xi}
            T\xi_{k}(x) - \xi_{k}(x) \ge 0.
        \end{align}
    In addition, if $\eta^{\downarrow}_{\ell}(x)=1$ and $\ell \ge k$, then 
    \begin{align}\label{eq:Txi}
    T\xi_{k}(x)-\xi_{k}(x)=k.
    \end{align}
\end{lemma}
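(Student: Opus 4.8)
I would prove Lemma \ref{lem:Txi} by relating the quantity $\xi_k$ directly to the carrier functions $\mathcal{W}_\ell$, rather than working with solitons. Recall from the definition that
\[
\xi_k(x) = x - \sum_{\ell=1}^{k}\sum_{y=1}^{x}\bigl(\eta^{\uparrow}_{\ell}(y)+\eta^{\downarrow}_{\ell}(y)\bigr),
\]
so that $T\xi_k(x)-\xi_k(x) = \sum_{\ell=1}^{k}\sum_{y=1}^{x}\bigl[(\eta^{\uparrow}_{\ell}(y)+\eta^{\downarrow}_{\ell}(y)) - (T\eta^{\uparrow}_{\ell}(y)+T\eta^{\downarrow}_{\ell}(y))\bigr]$. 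The key tool is Proposition \ref{prop:seat_flip}, which gives $T\eta^{\uparrow}_{\ell} = \eta^{\downarrow}_{\ell}$, so the summand telescopes to $\eta^{\uparrow}_{\ell}(y) - T\eta^{\downarrow}_{\ell}(y)$. Hence
\[
T\xi_k(x)-\xi_k(x) = \sum_{\ell=1}^{k}\sum_{y=1}^{x}\bigl(\eta^{\uparrow}_{\ell}(y) - T\eta^{\downarrow}_{\ell}(y)\bigr) = \sum_{\ell=1}^{k}\Bigl(\textstyle\sum_{y=1}^x\eta^{\uparrow}_{\ell}(y) - \sum_{y=1}^{x}T\eta^{\downarrow}_{\ell}(y)\Bigr).
\]
By Proposition \ref{prop:seat_KKR} (or directly by \eqref{eq:lem:1_1} applied to $T\eta$ together with $T\eta^{\uparrow}_\ell=\eta^{\downarrow}_\ell$), one has $\sum_{y=1}^x T\eta^{\downarrow}_\ell(y) = \sum_{y=1}^x T\eta^{\uparrow}_\ell(y) + \mathcal{W}_\ell(T\eta)(x)$-type corrections; cleaner is to write $\sum_{y=1}^x\eta^{\uparrow}_\ell(y) - \sum_{y=1}^x T\eta^{\downarrow}_\ell(y)$ in terms of carrier occupations. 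I would show this equals $\mathcal{W}_\ell(x) - (\text{something} \ge 0)$ and in fact that each term $\sum_{y=1}^x(\eta^{\uparrow}_\ell(y) - T\eta^{\downarrow}_\ell(y))$ is nonnegative, which immediately yields \eqref{eq:Txi_xi}.

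\textbf{Nonnegativity of each term.} For \eqref{eq:Txi_xi} the cleanest route is: fix $\ell$ and note $\sum_{y=1}^x\eta^{\uparrow}_\ell(y)$ counts $(\ell,\uparrow)$-seats in $[1,x]$ while $\sum_{y=1}^x T\eta^{\downarrow}_\ell(y)$ counts $(\ell,\downarrow)$-seats of $T\eta$ in $[1,x]$. Using $T\eta^{\uparrow}_\ell=\eta^{\downarrow}_\ell$ again on the ``previous'' configuration is circular, so instead I would argue via the carrier: $\sum_{y=1}^x \eta^{\uparrow}_\ell(y) - \sum_{y=1}^x \eta^{\downarrow}_\ell(y) = \mathcal{W}_\ell(x)\ge 0$ by \eqref{eq:lem:1_1}, and similarly $\sum_{y=1}^x T\eta^{\uparrow}_\ell(y) - \sum_{y=1}^x T\eta^{\downarrow}_\ell(y) = \mathcal{W}_\ell(T\eta)(x)\ge 0$. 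Combining with $T\eta^{\uparrow}_\ell = \eta^{\downarrow}_\ell$ gives $\sum_{y=1}^x \eta^{\uparrow}_\ell(y) - \sum_{y=1}^x T\eta^{\downarrow}_\ell(y) = \mathcal{W}_\ell(x) + \bigl(\sum_{y=1}^x\eta^{\downarrow}_\ell(y) - \sum_{y=1}^x T\eta^{\downarrow}_\ell(y)\bigr)$, and the bracket equals $-\mathcal{W}_\ell(T\eta)(x) + \bigl(\sum_{y=1}^x T\eta^{\uparrow}_\ell(y) - \sum_{y=1}^x \eta^{\downarrow}_\ell(y)\bigr)$... — this is getting tangled, so in the writeup I would instead directly compare the partial sums of $\eta^{\uparrow}_\ell$ and $\eta^{\downarrow}_\ell=T\eta^{\uparrow}_\ell$ using a pairing/matching argument (the $j$-th $(\ell,\uparrow)$-seat of $\eta$ lies weakly to the left of the $j$-th $(\ell,\downarrow)$-seat of $\eta$, which after one step becomes the $j$-th $(\ell,\uparrow)$-seat, and balls only move right), from which $\sum_{y=1}^x\eta^{\uparrow}_\ell(y)\ge\sum_{y=1}^x\eta^{\downarrow}_\ell(y)\ge\sum_{y=1}^xT\eta^{\downarrow}_\ell(y)$ follows. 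This proves \eqref{eq:Txi_xi}.

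\textbf{The exact value $k$ under the hypothesis.} For \eqref{eq:Txi}, suppose $\eta^{\downarrow}_\ell(x)=1$ for some $\ell\ge k$. By Lemma \ref{lem:1}(ii), $\mathcal{W}_m(x)=0$ for all $1\le m\le \ell$, in particular for all $1\le m\le k$. Also $\eta^{\downarrow}_\ell(x)=1$ means $\eta(x)=0$, and by Proposition \ref{prop:seat_flip} (the relation $T\eta^{\uparrow}_\ell=\eta^{\downarrow}_\ell$ read backwards, plus \eqref{eq:flip2}) the site $x$ is either a record of $T\eta$ or an $(m,\downarrow)$-seat of $T\eta$ with $m\ge \ell\ge k$; in all these cases $T\eta^{\uparrow}_m(x)=T\eta^{\downarrow}_m(x)=0$ for $1\le m\le k$. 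Now I use the telescoped formula: $T\xi_k(x)-\xi_k(x) = \sum_{m=1}^k\bigl(\sum_{y=1}^x\eta^{\uparrow}_m(y) - \sum_{y=1}^x T\eta^{\downarrow}_m(y)\bigr)$, and for each $m\le k$ the two partial sums can be rewritten using $\mathcal{W}_m(x)=0$: $\sum_{y=1}^x\eta^{\uparrow}_m(y) = \sum_{y=1}^x\eta^{\downarrow}_m(y)$ (by \eqref{eq:lem:1_1} since $\mathcal{W}_m(x)=0$), and $\sum_{y=1}^x T\eta^{\downarrow}_m(y) = \sum_{y=1}^x T\eta^{\uparrow}_m(y) = \sum_{y=1}^x \eta^{\downarrow}_m(y)$ — wait, the last equality needs $\sum_{y=1}^x T\eta^{\downarrow}_m(y)=\sum_{y=1}^x T\eta^{\uparrow}_m(y)$, which holds iff $\mathcal{W}_m(T\eta)(x)=0$; this follows because $x$ is a record or low $(\cdot,\downarrow)$-seat of $T\eta$, forcing $\mathcal{W}_m(T\eta)(x)=0$ for $m\le k$. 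Then using $T\eta^{\uparrow}_m=\eta^{\downarrow}_m$ once more, $\sum_{y=1}^x T\eta^{\uparrow}_m(y)=\sum_{y=1}^x\eta^{\downarrow}_m(y)=\sum_{y=1}^x\eta^{\uparrow}_m(y)$, so \emph{each} term in the sum over $m$ vanishes — giving $T\xi_k(x)-\xi_k(x)=0$, not $k$. That contradicts the claim, so I have the bookkeeping slightly off: the correct accounting must produce exactly one unit per level $m=1,\dots,k$, coming from the fact that $\sum_{y=1}^x \eta^{\uparrow}_m(y)$ and $\sum_{y=1}^x T\eta^{\uparrow}_m(y)$ differ by exactly $1$ at such an $x$ (the $(m,\downarrow)$-seat $x$ of $\eta$ is ``used up'' but the corresponding $(m,\uparrow)$-seat of $T\eta$ has not yet occurred by site $x$, since balls move strictly right). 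Thus $\sum_{y=1}^x\eta^{\uparrow}_m(y) - \sum_{y=1}^x T\eta^{\downarrow}_m(y) = 1$ for each $1\le m\le k$, and summing gives \eqref{eq:Txi}.

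\textbf{Main obstacle.} The heart of the matter — and the step I expect to require the most care — is the precise claim that at a site $x$ with $\eta^{\downarrow}_\ell(x)=1$, $\ell\ge k$, one has $\sum_{y=1}^x\eta^{\uparrow}_m(y) - \sum_{y=1}^x T\eta^{\downarrow}_m(y) = 1$ for every $1\le m\le k$. This is where the ``balls move strictly to the right under one time step'' phenomenon must be made rigorous, and the natural way to do it is via Lemmas \ref{lem:monotone} and \ref{lem:coincide}: apply $T\eta^{\uparrow}_m=\eta^{\downarrow}_m$ to see that $\sum_{y=1}^x T\eta^{\downarrow}_m(y)$ equals the number of $(m,\downarrow)$-seats of $T\eta$ up to $x$, bound this below the number of $(m,\uparrow)$-seats of $T\eta$ up to $x$ which equals the number of $(m,\downarrow)$-seats of $\eta$ up to $x$, and use $\mathcal{W}_m(x)=0$ together with a careful location argument (the last $(m,\downarrow)$-seat of $\eta$ at or before $x$ is $x$ itself when $m=\ell$, and for $m<\ell$ the telescoping $m^{\uparrow}_m = m^{\downarrow}_m = 0$ picture from Lemma \ref{lem:1} pins things down) to extract exactly the discrepancy $1$ per level. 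Once that combinatorial identity is in hand, \eqref{eq:Txi} is immediate by summing over $m=1,\dots,k$, and \eqref{eq:Txi_xi} follows from the weaker inequality version of the same pairing argument.
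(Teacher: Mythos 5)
Your first part reaches the right decomposition but then wanders: after telescoping you have
$T\xi_{k}(x)-\xi_{k}(x)=\sum_{\ell=1}^{k}\bigl(\sum_{y=1}^{x}\eta^{\uparrow}_{\ell}(y)-\sum_{y=1}^{x}T\eta^{\downarrow}_{\ell}(y)\bigr)$,
and each summand equals $\mathcal{W}_{\ell}(x)+T\mathcal{W}_{\ell}(x)\ge 0$ directly by \eqref{eq:lem:1_1} and \eqref{eq:flip}; summing over $\ell$ and using \eqref{eq:cap_seat} gives $T\xi_{k}(x)-\xi_{k}(x)=W_{k}(x)+TW_{k}(x)\ge 0$, which is exactly the paper's one-line proof of \eqref{eq:Txi_xi}. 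The pairing/matching argument you retreat to is unnecessary (it is just a restatement of $\mathcal{W}_{\ell},T\mathcal{W}_{\ell}\in\{0,1\}$), and the algebra you label as ``tangled'' contains a sign slip but was already done.

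The genuine gap is in the second part. You assert that a site $x$ with $\eta^{\downarrow}_{\ell}(x)=1$, $\ell\ge k$, becomes a record or an $(m,\downarrow)$-seat of $T\eta$, and hence that $T\mathcal{W}_{m}(x)=0$ for $m\le k$. This is false, and it is precisely what drives your computation to the contradictory answer $0$. By \eqref{eq:flip}, $T\eta^{\uparrow}_{\ell}(x)=\eta^{\downarrow}_{\ell}(x)=1$, i.e.\ $x$ is an $(\ell,\uparrow)$-seat of $T\eta$; Lemma \ref{lem:1}(i) applied to $T\eta$ then gives $T\mathcal{W}_{m}(x)=1$ for all $1\le m\le\ell$, hence $TW_{k}(x)=k$, while Lemma \ref{lem:1}(ii) gives $W_{k}(x)=0$. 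Substituting into the identity $T\xi_{k}(x)-\xi_{k}(x)=W_{k}(x)+TW_{k}(x)$ from the first part yields \eqref{eq:Txi} immediately. The ``main obstacle'' you defer---showing each level $m\le k$ contributes exactly one unit---is not an obstacle: the unit is exactly $T\mathcal{W}_{m}(x)$, which your own part-one decomposition already exhibits, and no appeal to Lemmas \ref{lem:monotone} or \ref{lem:coincide}, nor to a ``balls move strictly right'' heuristic (which is in any case misleading here, since the discrepancy sits at $x$ itself), is required. As written, your second part states the correct conclusion but does not prove it.
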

\begin{proof}
From \eqref{eq:cap_seat}, \eqref{eq:lem:1_1} and \eqref{eq:flip}, we have
        \begin{align}
            T\xi_{k}(x) - \xi_{k}(x) &= \sum_{\ell = 1}^{k} \sum_{y = 1}^{x} \left( \eta^{\uparrow}_{\ell}(y) - T\eta^{\uparrow}_{\ell}(y) \right) + \sum_{\ell = 1}^{k} \sum_{y = 1}^{x} \left( \eta^{\downarrow}_{\ell}(y) - T\eta^{\downarrow}_{\ell}(y) \right) \\
            &=  \sum_{\ell = 1}^{k} \sum_{y = 1}^{x} \left( \eta^{\uparrow}_{\ell}(y) - \eta^{\downarrow}_{\ell}(y) \right) + \sum_{\ell = 1}^{k} \sum_{y = 1}^{x} \left( T\eta^{\uparrow}_{\ell}(y) - T\eta^{\downarrow}_{\ell}(y) \right) \\
            &= W_{k}(x) + TW_{k}(x) \ge 0.
        \end{align}
Suppose $\eta^{\downarrow}_{\ell}(x)=1$ and $\ell \ge k$. By \eqref{eq:cap_seat}, \eqref{eq:lem:1_1} and Lemma \ref{lem:1}(ii), $ W_{k}(x) = 0$. Also,
    by \eqref{eq:flip}, $T\eta^{\uparrow}_{\ell}(x)=1$ and so by \eqref{eq:cap_seat}, \eqref{eq:lem:1_1} and Lemma \ref{lem:1}(i), $ TW_{k}(x) = k$.
    Hence,
    \begin{align}
        T\xi_{k}(x)-\xi_{k}(x) = W_{k}(x) + TW_{k}(x) = k.
    \end{align}
\end{proof}

Now we give the proof of Theorem \ref{thm:seat_ISM}.

\begin{proof}[Proof of Theorem \ref{thm:seat_ISM}]

    First, we note that $0 < \left| \left\{ x \in \Z_{\ge 0} \ ; \  \xi_{k}(x) = i \right\} \right| < \infty$ if and only if $s_{k}(i+1) < \infty$. In addition, 
    from \eqref{eq:Txi_xi}, we have $Ts_{k}(i) \le s_{k}(i)$ for any $k \in \N$ and $i \in \Z_{\ge 0}$. Hence, we see that $ 0 < \left| \left\{ x \in \Z_{\ge 0} \ ; \  \xi_{k}(x) = i \right\} \right| < \infty$ implies $0 < \left| \left\{ x \in \Z_{\ge 0} \ ; \  T\xi_{k}(x) = i \right\} \right| < \infty$.
    
    Next, by Lemma \ref{lem:repofzeta} and \eqref{eq:flip}, 
   \begin{align}
       (T\zeta)_{k}(i) & = |\{x \in \N \ ; \ T\eta^{\uparrow}_{k}(x)=1, T\xi_{k}(x)=i\}|   - |\{x \in \N \ ; \ T\eta^{\uparrow}_{k+1}(x)=1, T\xi_{k}(x)=i+1\}| \\
        & = |\{x \in \N \ ; \ \eta^{\downarrow}_{k}(x)=1, T\xi_{k}(x)=i\}| - |\{x \in \N \ ; \ \eta^{\downarrow}_{k+1}(x)=1, T\xi_{k}(x)=i+1\}|.
   \end{align}
 Then, by \eqref{eq:Txi},
   \begin{align}
 |\{x \in \N \ ; \ \eta^{\downarrow}_{k}(x)=1, T\xi_{k}(x)=i\}| &= |\{x \in \N \ ; \ \eta^{\downarrow}_{k}(x)=1, \xi_{k}(x) = i - k\}|,
   \end{align}
 and similarly, 
  \begin{align}
 |\{x \in \N \ ; \ \eta^{\downarrow}_{k+1}(x)=1, T\xi_{k}(x)=i+1\}| & = |\{x \in \N \ ; \ \eta^{\downarrow}_{k+1}(x)=1, \xi_{k}(x) = i - k + 1\}|.
    \end{align}
 Hence, by Lemma \ref{lem:repofzeta},
\begin{align}
 (T\zeta)_{k}(i) &= |\{x \in \N \ ;  \ \eta^{\downarrow}_{k}(x)=1, \xi_{k}(x)=i-k\}| \\ & \quad - |\{x \in \N \ ; \ \eta^{\downarrow}_{k+1}(x)=1, \xi_{k}(x)=i - k + 1\}| \\ &=\zeta_{k}(i-k).
\end{align}
\end{proof}

\section{Relation to KKR-bijection}\label{sec:KKR}

    \subsection{Definition of the rigged configuration.}
    
    Let us recall some basic notions. A \emph{partition} $\mu =(\mu_1\ge \mu_2 \ge \cdots \ge 0)$ is a weakly decreasing sequence of non-negative integers that eventually becomes zero. Partitions are naturally represented by their Young diagrams and often the two notions are interchanged. The \emph{conjugate partition} of $\mu$, denoted by $\lambda$, is the partition defined by $\lambda_k = |\{i \in \N \ : \ \mu_i \ge k \}|$, for $k \in \N$. For any $k$, the multiplicity of $k$ in $\mu$ is $m_k(\mu) := \lambda_k-\lambda_{k+1}$;  we will often suppress the dependence from $\mu$ to lighten the notation. A \emph{rigging} of a partition $\mu$ is a collection of arrays $\mathbf{J} = \left\{ J_{k} \ : \ 1 \le k \le \mu_1 \right\}$ such that
    \begin{equation}
        J_{k} = (J_{k,1},\dots, J_{k,m_k}),
        \qquad
        \text{with}
        \qquad
        -k \le J_{k,1} \le \cdots \le J_{k,m_k}
    \end{equation}
    and $J_{k} = \varnothing$ if $m_k=0$. A pair $(\mu,\mathbf{J})$ consisting of a partition and its rigging is called a (rank one) \emph{rigged configuration} and we denote the set of them by $RC$.
    \begin{figure}[h]
    \centering
            \begin{tikzpicture}
            \node at (0,0.5) {$\mu = \left( 4,2,2,1 \right)$};
	           \node at (0,-0.5){$\lambda = (4,3,1,1)$};
	           \node[left] at (4,0) {$\yng(4,2,2,1)$};
	           \node[right] at (4.5,.6) {$J_{4,1}$};
	           \node[right] at (4.5,.15) {$J_{2,2}$};
	           \node[right] at (4.5,-.3) {$J_{2,1}$};
	           \node[right] at (4.5,-.75) {$J_{1,1}$};
	       \end{tikzpicture}
	       \caption{A partition $\mu$ and its conjugate on the left and a rigged configuration $(\mu,\mathbf{J})$ on the right.}
	       \label{fig:partition_and_rigging}
	\end{figure}
	
	Rigged configurations are in bijections with ball configurations $\eta \in \Omega_{< \infty}$. In {the} literature{,} such {a} bijection takes the name of {the} \emph{Kerov-Kirillov-Reshetikhin (KKR) bijection} \cite{KKR,KOSTY} and we recall it below. Starting from $\eta$ we will sequentially construct rigged configurations $(\mu(x),\mathbf{J}(x))$ for $x=0,1,\dots$ in such a way that $(\mu(x),\mathbf{J}(x))$ is a function of $\eta(1),\dots,\eta(x)$ and $(\mu,\mathbf{J}) = \lim_{x\to \infty}(\mu(x),\mathbf{J}(x))$ will be the rigged configuration associated {with} $\eta$. Abusing notation{,} we will denote the arrays in rigging $\mathbf{J}(x)$ by $J_{k}(x)$. For any $k \in \N$ and $x\ge 0$ define the \emph{$k$-th vacancy} at $x$ as
	\begin{equation}
	    p_k(x) = x - 2 E_{k}(x),
	\end{equation}
	where $E_{k}(x)$ is called the \emph{$k$-th energy} defined as
	    \begin{align}
	        E_{k}(x) := \sum_{i \in \N} \min \left\{\mu_i(x),k \right\}.
	    \end{align}
	In case $\mu_i(x)=k$ and $p_k(x) = J_{k,{m_{k}}}(x)$ for some $i$ and $k$, we say that $(\mu_i(x),J_{k,{m_{k}}}(x))$ is a \emph{singular row} of length $k$ of $(\mu(x),\mathbf{J}(x))$. {For examples of singular rows, see Figure \ref{ex:singular_3}. Each partition corresponds to $\mu(x)$, and the number on the left-hand (resp. right-hand) side of the row with length $k$ is the value of $p_{k}(x)$ (resp. $J_{k}(x)$) at some $x$. The leftmost rigged configuration has two singular rows, and the middle one has one singular row, but the rightmost one has no singular row. 
    \begin{figure}
        \centering
        \begin{equation}
	    \begin{tikzpicture}
	           \node[left] at (0,0) {$\yng(3,2)$};
	           \node[right] at (.1,.2) {$\mathbf{-3}$};
	           \node[right] at (.1,-.25) {$\mathbf{-1}$};
                \node[left] at (-1.5,.2) {$-3$};
                \node[left] at (-1.5,-.25) {$-1$};
                \node[left] at (4,0) {$\yng(3,2)$};
	           \node[right] at (4.1,.2) {$-3$};
	           \node[right] at (4.1,-.25) {$\mathbf{1}$};
                \node[left] at (2.5,.2) {$-1$};
                \node[left] at (2.5,-.25) {$1$};
                \node[left] at (8,0) {$\yng(3,2)$};
	           \node[right] at (8.1,.2) {$-3$};
	           \node[right] at (8.1,-.25) {$-1$};
                \node[left] at (6.5,.2) {$0$};
                \node[left] at (6.5,-.25) {$2$};
	    \end{tikzpicture}
	   \end{equation}
        \caption{Some examples of partitions with riggings and vacancies. We note that the leftmost rigged configuration corresponds to $\eta = 1110011$, the middle one to $\eta = 111000011$ and the rightmost one to $\eta = 1110001100$. We highlighted singular rows writing the corresponding rigging in boldface.}
        \label{ex:singular_3}
    \end{figure}}
	
	To construct the sequence $(\mu(x),\mathbf{J}(x))$ we set $\mu(0)= \varnothing$ and $\mathbf{J}(0) = \varnothing$. Assuming that we have determined $(\mu(x),\mathbf{J}(x))$, we will construct $(\mu(x+1),\mathbf{J}(x+1))$ as a function of $\eta(x+1)$. If $\eta(x+1) = 0$, then we set $(\mu(x+1),\mathbf{J}(x+1)) = (\mu(x),\mathbf{J}(x))$. On the other hand, if $\eta(x+1)=1$, we look for the singular row $(\mu(x),J_{k,{m_{k}}}(x))$ of $(\mu(x),\mathbf{J}(x))$ of maximal length $k$. Then we replace such row with a singular row of length $k+1$. If there are no singular rows, then we simply create a singular row of length 1. Since we assume that $\eta$ has only finitely many balls, it is clear that from a certain $x$ onward $(\mu(x), \mathbf{J}(x))$ stabilizes and the result is the desired rigged configuration $(\mu, \mathbf{J})$. Note that even for general $\eta \in \Omega$, $(\mu(x), \mathbf{J}(x))$ is well-defined for any $x \in \Z_{\ge 0}$, but it may not stabilize. One can easily see that, starting from a rigged configuration $(\mu, \mathbf{J})$, it is possible to compute the algorithm just described in reverse and associate uniquely a ball configuration $\eta \in \Omega_{< \infty}$.  
	
	In Fig. \ref{fig:ex_rig} we show the computation of the KKR bijection relating the ball configuration $\eta = 11001110110001100000\dots$ 
	with the rigged configuration
	\begin{equation}
	    \begin{tikzpicture}
	           \node[left] at (0,0) {$\yng(4,2,2,1)$};
	           \node[right] at (.1,.6) {$-4$};
	           \node[right] at (.2,.15) {$1$};
	           \node[right] at (.1,-.3) {$-2$};
	           \node[right] at (.2,-.75) {$3.$};
	    \end{tikzpicture}
	\end{equation}
	For further examples and for generalizations of the KKR bijection we invite the reader to consult \cite{IKT} and references therein.
	
	\medskip
 
    \begin{figure}[h]
        \includegraphics[width=\linewidth]{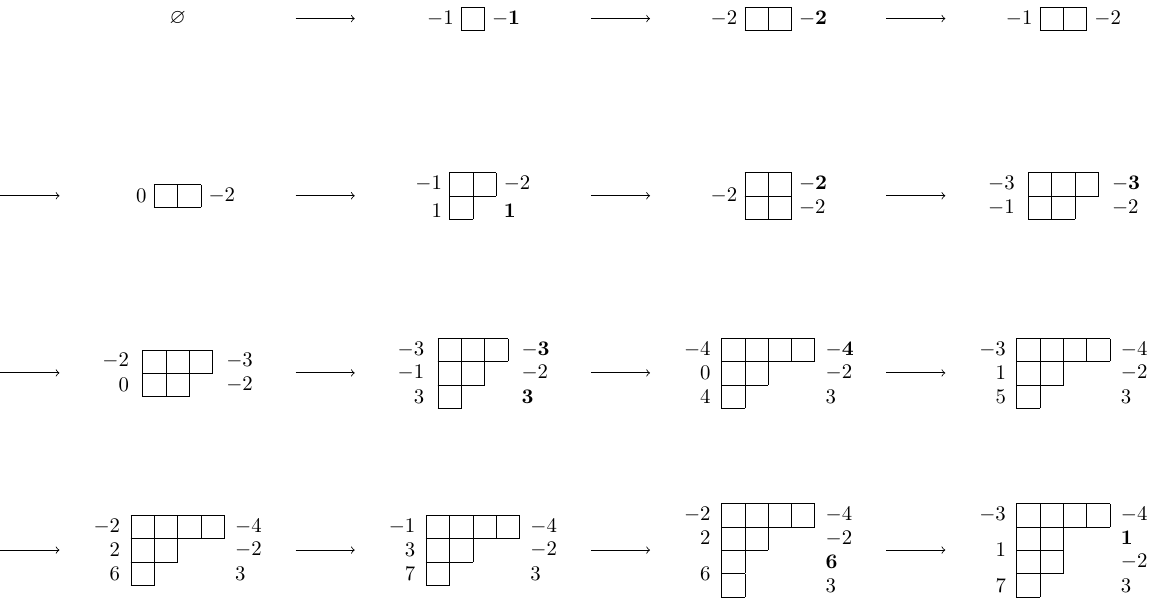}
        \caption{The computation of the KKR bijection corresponding to the configuration $\eta = 110011101100011000\dots$. Integers at the left of each partition represent the vacancies $p_k(x)$ associated {with} each group of rows of the same length $k$, while the integers at the right of each diagram represent the rigging $\mathbf{J}$. We have highlighted singular rows by writing the corresponding component of the rigging in boldface.} \label{fig:ex_rig}
    \end{figure} 	

    The KKR bijection is extremely important in the study of the BBS as in the rigged configuration, the dynamics becomes linear. The following proposition recalls a result from \cite{KOSTY}.
    
    \begin{theorem}[\cite{KOSTY}]\label{thm:KOSTY}
    Let $\eta \in \Omega_{< \infty}$ be a configuration associated with the rigged configuration $(\mu,\mathbf{J})$ under {the} KKR bijection. Then for any $k \in \N$ and $\ell \in \N \cup \left\{ \infty \right\}$, we have {$T\mu = \mu$, and}
    \begin{align}
        (T_{\ell}\mathbf{J})_{k} = \left( J_{k,j} + k \wedge \ell ; 1 \le j \le m_k \right).
	\end{align} 
	\end{theorem}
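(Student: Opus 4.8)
The plan is to separate the two assertions: that the partition $\mu$ is fixed by $T_\ell$, and that the array $J_k$ of the rigging is rigidly shifted by $k\wedge\ell$ for each $k$. I would prove the invariance of $\mu$ first and then the shift of the riggings, handling the case $\ell=\infty$ directly with the seat-number machinery established above and the case $\ell<\infty$ along the lines of the original reference \cite{KOSTY}.

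For the invariance of $\mu$ when $\ell=\infty$: by Proposition \ref{prop:seat_KKR} the conjugate of $\mu=\lim_{x\to\infty}\mu(x)$ has entries $\lambda_k=\sum_{y\in\N}\eta^{\uparrow}_{k}(y)$, which are finite since $\eta\in\Omega_{<\infty}$, and Remark \ref{rem:conserv} gives $\sum_{y}T\eta^{\uparrow}_{k}(y)=\sum_{y}\eta^{\uparrow}_{k}(y)$ for all $k$; hence every $\lambda_k$, and therefore $\mu$, is preserved by $T$. For $\ell<\infty$ the invariance of $\mu$ is the conservation of the soliton content of the BBS$(\ell)$, which is a classical property of these dynamics (see \cite{IKT} and the references therein) and can also be deduced from the commutativity $T_\ell T_{\ell'}=T_{\ell'}T_\ell$ of the carrier dynamics.

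For the shift of the riggings when $\ell=\infty$: since $\eta\in\Omega_{<\infty}$, the hypotheses of Theorem \ref{thm:seat_ISM} hold for all $k$ and $i$, and Theorem \ref{thm:2} together with Proposition \ref{prop:seat_slot} (whose proofs do not use the present theorem) give the static identity $\#\{\,j\ ;\ J_{k,j}=i-k\,\}=\zeta_k(i)$, valid for $\eta$ and, applied to $T\eta$, also for its associated rigged configuration. Combining this with Theorem \ref{thm:seat_ISM}, namely $(T\zeta)_{k}(i)=\zeta_{k}(i-k)$, one obtains, for every integer $v$,
\[
\#\{\,j\ ;\ (T\mathbf{J})_{k,j}=v\,\}=(T\zeta)_{k}(v+k)=\zeta_{k}(v)=\#\{\,j\ ;\ J_{k,j}+k=v\,\},
\]
so the multisets $\{(T\mathbf{J})_{k,j}\}_{j}$ and $\{J_{k,j}+k\}_{j}$ coincide; since both arrays are weakly increasing (and of the same length), this forces $(T\mathbf{J})_{k,j}=J_{k,j}+k$ for every $j$, which is the statement for $\ell=\infty$.

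For general $\ell\in\N$ the same scheme would close once one had the BBS$(\ell)$-analogue $(T_\ell\zeta)_k(i)=\zeta_k(i-(k\wedge\ell))$ of Theorem \ref{thm:seat_ISM}; but in the present framework that analogue is itself obtained as a corollary of the theorem being proved, so one instead follows \cite{KOSTY}: realize $\eta$ inside the crystal chain $(B^{1,1})^{\otimes\infty}$, on which $T_\ell$ acts by sending an initially empty carrier in $B^{1,\ell}$ through the chain via successive combinatorial $R$-matrices; invoke the crystal interpretation of the KKR bijection, in which $\mu$ records the rectangles produced by the iterated crystal isomorphism and each $J_{k,j}$ is a partial sum of the energy function $H$ along the chain; and carry out the local computation showing that one sweep of the capacity-$\ell$ carrier increases the energy attached to a length-$k$ block by exactly $\min(k,\ell)=k\wedge\ell$, summing these increments to get $(T_\ell\mathbf{J})_k=(J_{k,j}+k\wedge\ell)$. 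I expect this last local energy computation — equivalently, the identification of the riggings with partial sums of $H$, which is the technical core of \cite{KOSTY} — to be the main obstacle: it is precisely the step that accounts for soliton interactions (the raw soliton positions do not move linearly when solitons of different sizes overlap, and the rigged configuration is the object that linearizes the bookkeeping), and reproducing it directly at the level of the seat-number carrier, as the paper conjectures is possible, would require redoing this analysis by hand.
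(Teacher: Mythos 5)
The paper does not actually prove this statement: it is imported verbatim from \cite{KOSTY} as a black box (hence the citation in the theorem header), and the paper's logical flow runs in the opposite direction, using Theorem \ref{thm:KOSTY} together with Theorem \ref{thm:2} to \emph{deduce} the finite-capacity linearization of $\tilde{\zeta}$ (Theorem \ref{thm:slot_finite}). Your proposal is therefore necessarily a different route. For $\ell=\infty$ your argument is correct and, importantly, non-circular: Theorem \ref{thm:seat_ISM} is proved in Section \ref{sec:seat} purely from the seat-number machinery, and Proposition \ref{prop:seat_KKR}, Proposition \ref{prop:seat_slot} and Theorem \ref{thm:2} are static identities proved in Sections \ref{sec:KKR}--\ref{sec:slot} without any appeal to Theorem \ref{thm:KOSTY}; applying Theorem \ref{thm:2} to both $\eta$ and $T\eta$ and sandwiching Theorem \ref{thm:seat_ISM} in between gives the multiset identity you state, and since riggings are by definition weakly increasing arrays (and summing the counts over $v$ gives $m_k(T\eta)=m_k(\eta)$, so the separate appeal to Remark \ref{rem:conserv} is not even needed), the componentwise shift follows. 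This is essentially the paper's proof of Theorem \ref{thm:slot_finite} run backwards, and it is a genuine payoff of the seat-number formalism: it recovers the $\ell=\infty$ case of \cite{KOSTY} independently.

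For $\ell<\infty$, however, your proposal does not contain a proof, only a sketch of the crystal-theoretic argument of \cite{KOSTY} (carrier in $B^{1,\ell}$, combinatorial $R$-matrix, riggings as partial sums of the local energy $H$), with the technical core explicitly left to the reference. You correctly diagnose why the shortcut fails there: the finite-$\ell$ analogue $(T_{\ell}\zeta)_k(i)=\zeta_k(i-(k\wedge\ell))$ is, in this paper, itself a corollary of Theorem \ref{thm:KOSTY} (see the remark following Theorem \ref{thm:seat_ISM}, where the authors say a direct proof is not pursued), so using it would be circular. Since the paper itself treats the general-$\ell$ statement as a quoted external result, your deferral matches the paper's level of rigor, but be aware that as written your proposal proves the theorem only for $\ell=\infty$ and cites it for $\ell<\infty$.
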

	
	\begin{remark}\label{rem:localenergy}
        %\matteo{The rigged configuration of BBS has been studied from the crystal theory formulation \cite{IKT}. In this formalism, the local energy function $H$ is defined on the tensor product of crystals and plays an important role. In particular, the energy $E_{k}$ is defined as the sum of local energy functions. %$\to$ 
        In \cite{IKT} relations between rigged configurations and BBS were explained through the formalism of the theory of crystals. In this formalism the energy function $E_k$ can be expressed as a certain sum over a more refined quantity called \emph{local energy} which is function of a tensor product of two crystals \cite{FOY}.
        For our model, such local energy can be given in term of the function $\tilde{H} : \{0,1 \} \times B \to \{0,1 \}$, $B := \{(k,\ell) ; k \in \N, 0 \le \ell \le k \}$, given by
            \begin{align}
                \tilde{H}(a,(k,\ell)) := \min\left\{ a, k - \ell  \right\},
            \end{align}    
     %   \matteo{
        using which we can represent $E_k$ as
            \begin{align}
                E_{k}(x) = \sum_{y = 1}^{x} \tilde{H}\left(\eta(y),(k,W_{k}(y-1)) \right).
            \end{align}
        Here recall that $W_k$ is the carrier with capacity $k$. There is a direct relation between $\tilde{H}$ and seat numbers. Actually, from the above representation of $E_{k}$ and \eqref{eq:seat_energy} we obtain
            \begin{align}
                \tilde{H}\left(\eta(x),(k,W_{k}(x-1)) \right) = \sum_{\ell = 1}^{k} \eta^{\uparrow}_{\ell}(x).
            \end{align}
        From this relation, we can also deduce that values {of} $\eta^{\uparrow}_{k}(x)$, or rather their sums, represent the local energy of the BBS.  
    \end{remark}

\subsection{A pair of interlacing Young diagrams with riggings}\label{sec:algo}  
In this subsection, we introduce a new algorithm to obtain a sequence of pairs of Young diagrams from a ball configuration $\eta \in \Omega$. Then, we prove that the rigged configuration obtained by the KKR bijection is understood as a projection {from} the pair of Young diagrams to the first component. 

For a pair of Young diagrams $(\mu^{\uparrow}, \mu^{\downarrow})$, we say that the pair is interlacing if 
\[
\mu^{\uparrow}_1 \ge \mu^{\downarrow}_1 \ge \mu^{\uparrow}_2 \ge \mu^{\downarrow}_2 \ge \dots
\]
holds, where we use the convention that for a given partition $\mu$, $\mu_{i} = 0$ for any $i > \lambda_{1}$. The interlacing condition is equivalent to
\begin{equation}\label{eq:cond-alter}
    \lambda^{\uparrow}_k - \lambda^{\downarrow}_k \in \{0,1\},
\end{equation}
for any $k \ge 1$ where $\lambda^{\sigma}$ is the conjugate of $\mu^{\sigma}$ for $\sigma \in \{ \uparrow, \downarrow \}$. 

We introduce 
\begin{align}
    k^{\uparrow}& :=\sup \{ k \ge 1  \ ;  \ \lambda^{\uparrow}_{\ell} - \lambda^{\downarrow}_{\ell}=1, \ 1 \le \forall \ell \le k \} \\
    & =\sup \{ k \ge 1  \ ; \ \sum_{\ell=1}^k(\lambda^{\uparrow}_{\ell} - \lambda^{\downarrow}_{\ell})=k \}
\end{align}
and 
\begin{align}
    k^{\downarrow}& :=\sup \{ k \ge 1  \ ;  \ \lambda^{\uparrow}_{\ell} - \lambda^{\downarrow}_{\ell}=0, \ 1 \le \forall \ell \le k \} \\
    & =\sup \{ k \ge 1  \ ; \ \sum_{\ell=1}^k(\lambda^{\uparrow}_{\ell} - \lambda^{\downarrow}_{\ell})=0 \}
\end{align}
with convention $\sup \emptyset = 0$. Note that $ k^{\uparrow} < \infty$ for any pair of interlacing Young diagrams $(\mu^{\uparrow}, \mu^{\downarrow})$ but $k^{\downarrow}=\infty$ if and only if $\mu^{\uparrow} = \mu^{\downarrow}$.

The next lemma is rather straightforward but we state it for clarity.

\begin{lemma}\label{lem:alt2}
Take a pair of interlacing Young diagrams $(\mu^{\uparrow}, \mu^{\downarrow})$. For $\sigma \in \{ \uparrow, \downarrow \}$, let $\tilde{\mu}^{\sigma}$ be the Young diagram obtained from $\mu^{\sigma}$ by adding a box to one of the row(s) satisfying $\mu_i^{\sigma}=k^{\sigma}$. In other words, we replace a row with length $k^{\sigma}$ by a row with length $k^{\sigma}+1$. Here, if $k^{\sigma}=0$, then we simply add a row with length one to $\mu^{\sigma}$. Then, the pair $(\tilde{\mu}^{\uparrow}, \mu^{\downarrow})$ is still a pair of interlacing Young diagrams. Also, if $\mu^{\uparrow} \neq \mu^{\downarrow}$, the pair $(\mu^{\uparrow}, \tilde{\mu}^{\downarrow})$ is still a pair of interlacing Young diagrams. 
\end{lemma}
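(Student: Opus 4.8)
The plan is to verify the interlacing condition in its equivalent conjugate form \eqref{eq:cond-alter}: namely, for a pair $(\mu^{\uparrow},\mu^{\downarrow})$ the interlacing property holds if and only if $\lambda^{\uparrow}_k - \lambda^{\downarrow}_k \in \{0,1\}$ for all $k \ge 1$. So I would first translate the operation ``add a box to a row of length $k^{\sigma}$'' into its effect on the conjugate partition. Adding a box to some row of length $j$ (turning it into a row of length $j+1$) increases $\lambda_{j+1}$ by one and leaves all other $\lambda_k$ unchanged; here $j = k^{\sigma}$, so passing from $\mu^{\uparrow}$ to $\tilde\mu^{\uparrow}$ changes the conjugate by $\tilde\lambda^{\uparrow}_{k^{\uparrow}+1} = \lambda^{\uparrow}_{k^{\uparrow}+1}+1$ and $\tilde\lambda^{\uparrow}_k = \lambda^{\uparrow}_k$ for $k \ne k^{\uparrow}+1$ (this is legitimate provided there actually is a row of length $k^{\uparrow}$, or $k^{\uparrow}=0$, which I should note separately; when $k^{\uparrow}=0$ we add a new row of length $1$, increasing $\lambda^{\uparrow}_1$ by one).

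Next I would check the new differences. For the pair $(\tilde\mu^{\uparrow},\mu^{\downarrow})$: for $k \ne k^{\uparrow}+1$ the difference $\tilde\lambda^{\uparrow}_k - \lambda^{\downarrow}_k = \lambda^{\uparrow}_k - \lambda^{\downarrow}_k \in \{0,1\}$ by hypothesis, so nothing changes there. For $k = k^{\uparrow}+1$ we have $\tilde\lambda^{\uparrow}_{k^{\uparrow}+1} - \lambda^{\downarrow}_{k^{\uparrow}+1} = (\lambda^{\uparrow}_{k^{\uparrow}+1} - \lambda^{\downarrow}_{k^{\uparrow}+1}) + 1$, so I must show $\lambda^{\uparrow}_{k^{\uparrow}+1} - \lambda^{\downarrow}_{k^{\uparrow}+1} = 0$, i.e. that it was not already $1$. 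But this is exactly the defining property of $k^{\uparrow}$: by definition $k^{\uparrow}$ is the supremum of all $k$ with $\lambda^{\uparrow}_{\ell}-\lambda^{\downarrow}_{\ell}=1$ for all $\ell \le k$, so either $k^{\uparrow}+1$ exceeds the range (i.e. $\lambda^{\uparrow}_{k^{\uparrow}+1}-\lambda^{\downarrow}_{k^{\uparrow}+1}\ne 1$, forcing it to be $0$ by the interlacing hypothesis), or, when $k^{\uparrow}=0$, already $\lambda^{\uparrow}_1 - \lambda^{\downarrow}_1 \ne 1$ hence $=0$. Either way $\tilde\lambda^{\uparrow}_{k^{\uparrow}+1} - \lambda^{\downarrow}_{k^{\uparrow}+1} = 1 \in \{0,1\}$, and the interlacing condition \eqref{eq:cond-alter} holds for $(\tilde\mu^{\uparrow},\mu^{\downarrow})$. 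I would also remark that one must confirm $\tilde\mu^{\uparrow}$ is still a genuine partition (weakly decreasing), which follows because a row of length $k^{\uparrow}$ becomes one of length $k^{\uparrow}+1$ and $\tilde\lambda^{\uparrow}$ remains weakly decreasing by the computation above.

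The argument for $(\mu^{\uparrow},\tilde\mu^{\downarrow})$ is symmetric: adding a box to a row of $\mu^{\downarrow}$ of length $k^{\downarrow}$ increases $\lambda^{\downarrow}_{k^{\downarrow}+1}$ by one, so $\lambda^{\uparrow}_{k^{\downarrow}+1} - \tilde\lambda^{\downarrow}_{k^{\downarrow}+1} = (\lambda^{\uparrow}_{k^{\downarrow}+1} - \lambda^{\downarrow}_{k^{\downarrow}+1}) - 1$, and I need this to be $\ge 0$, i.e. $\lambda^{\uparrow}_{k^{\downarrow}+1} - \lambda^{\downarrow}_{k^{\downarrow}+1} = 1$. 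Again this is forced by the definition of $k^{\downarrow}$ as the supremum of $k$ with $\lambda^{\uparrow}_{\ell}-\lambda^{\downarrow}_{\ell}=0$ for all $\ell\le k$: the hypothesis $\mu^{\uparrow}\ne\mu^{\downarrow}$ guarantees $k^{\downarrow}<\infty$, so $\lambda^{\uparrow}_{k^{\downarrow}+1}-\lambda^{\downarrow}_{k^{\downarrow}+1}\ne 0$ and hence equals $1$ by interlacing; the case $k^{\downarrow}=0$ is handled the same way since then $\lambda^{\uparrow}_1 - \lambda^{\downarrow}_1 \ne 0$. Thus \eqref{eq:cond-alter} holds for $(\mu^{\uparrow},\tilde\mu^{\downarrow})$ as well. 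There is no real obstacle here — the only point requiring care is the bookkeeping at the single index $k^{\sigma}+1$ and the degenerate case $k^{\sigma}=0$, so I would make sure to state the conjugate-partition translation cleanly at the outset so that both halves reduce to a one-line check.
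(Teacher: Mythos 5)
Your proposal is correct and follows essentially the same route as the paper: the paper's proof is exactly the observation that $\tilde{\lambda}^{\sigma}_{k}=\lambda^{\sigma}_k$ for $k \neq k^{\sigma}+1$, $\tilde{\lambda}^{\sigma}_{k^{\sigma}+1}=\lambda^{\sigma}_{k^{\sigma}+1}+1$, and that the definition of $k^{\sigma}$ forces the condition \eqref{eq:cond-alter} to survive at the single affected index. You merely spell out the maximality argument and the degenerate case $k^{\sigma}=0$ more explicitly than the paper does.
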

\begin{proof}
    Since $\tilde{\lambda}^{\sigma}_{k}=\lambda^{\sigma}_k$ for $k \neq k^{\sigma}+1$ and $\tilde{\lambda}^{\sigma}_{k^{\sigma}+1}=\lambda^{\sigma}_{k^{\sigma}+1} +1$, the condition \eqref{eq:cond-alter} is satisfied by the definition of $k^{\sigma}$. 
\end{proof}

Now, for a given $\eta \in \Omega$, we construct a growing sequence of pairs of interlacing Young diagrams $(\mu^{\uparrow}(x), \mu^{\downarrow}(x))_{x \in \Z_{\ge 0}}$. 

Set $\mu^{\sigma}(0)=\emptyset$ for $\sigma=\uparrow, \downarrow$. For $x \ge 0$, we construct  $\mu^{\uparrow}(x+1), \mu^{\downarrow}(x+1) $ as a function of $\mu^{\uparrow}(x), \mu^{\downarrow}(x)$ and $\eta(x+1)$ by the algorithm explained below.

\begin{enumerate}
    \item If $\eta(x+1)=1$, then $\mu^{\downarrow}(x+1)=\mu^{\downarrow}(x)$ and $\mu^{\uparrow}(x+1)$ is obtained by adding a box to $\mu^{\uparrow}(x)$ at one of the row(s) satisfying $\mu_i^{\uparrow}(x)=k^{\uparrow}(x)$. In other words,
\[
\lambda_k^{\uparrow}(x+1)=\lambda_k^{\uparrow}(x)
\]
for any $k \neq k^{\uparrow}(x)+1$ and 
\[
\lambda_{k^{\uparrow}(x)+1}^{\uparrow}(x+1)=\lambda^{\uparrow}_{k^\uparrow(x)+1}(x)+1.
\]
Then, by Lemma \ref{lem:alt2}, $(\mu^{\uparrow}(x+1), \mu^{\downarrow}(x+1))$ is also a pair of interlacing Young diagrams. 
    
    \item If $\eta(x+1) =0$ and $\mu^{\uparrow}(x) \neq \mu^{\downarrow}(x)$, then $\mu^{\uparrow}(x+1)=\mu^{\uparrow}(x)$ and $\mu^{\downarrow}(x+1)$ is obtained by adding a box to $\mu^{\downarrow}(x)$ at one of the row(s) satisfying $\mu_i^{\downarrow}(x)=k^{\downarrow}(x)$ as for the first case. Then, by Lemma \ref{lem:alt2} again, $(\mu^{\uparrow}(x+1), \mu^{\downarrow}(x+1))$ is also a pair of interlacing Young diagrams.
    
    \item If $\eta(x+1) =0$ and $\mu^{\uparrow}(x)=\mu^{\downarrow}(x)$, then we set $(\mu^{\uparrow}(x+1), \mu^{\downarrow}(x+1))= (\mu^{\uparrow}(x), \mu^{\downarrow}(x))$.  
\end{enumerate}

\begin{figure}
    \centering
    \begin{tikzpicture}[scale=0.6]
        \node at (0,-1) {$\emptyset$};
        \draw[->] (2,-1)--(3,-1);
        \node at (5,-1) {$\young(\uparrow)$};
        \draw[->] (7,-1)--(8,-1);
        \node at (10,-1) {$\young(\uparrow\uparrow)$};
        \draw[->] (12,-1)--(13,-1);
        \node at (15,-1) {$\young(\updownarrows\uparrow)$};
        \draw[->] (-3,-4)--(-2,-4);
        \node at (0,-4) {$\young(\updownarrows\updownarrows)$};
        \draw[->] (2,-4)--(3,-4);
        \node at (5,-4) {$\young(\updownarrows\updownarrows,\uparrow)$};
        \draw[->] (7,-4)--(8,-4);
        \node at (10,-4) {$\young(\updownarrows\updownarrows,\uparrow\uparrow)$};
        \draw[->] (12,-4)--(13,-4);
        \node at (15,-4) {$\young(\updownarrows\updownarrows\uparrow,\uparrow\uparrow)$};
        \draw[->] (-3,-8)--(-2,-8);
        \node at (0,-8) {$\young(\updownarrows\updownarrows\uparrow,\updownarrows\uparrow)$};
        \draw[->] (2,-8)--(3,-8);
        \node at (5,-8) {$\young(\updownarrows\updownarrows\uparrow,\updownarrows\uparrow,\uparrow)$};
        \draw[->] (7,-8)--(8,-8);
        \node at (10,-8) {$\young(\updownarrows\updownarrows\uparrow\uparrow,\updownarrows\uparrow,\uparrow)$};
        \draw[->] (12,-8)--(13,-8);
        \node at (15,-8) {$\young(\updownarrows\updownarrows\uparrow\uparrow,\updownarrows\uparrow,\updownarrows)$};
        \draw[->] (-3,-12)--(-2,-12);
        \node at (0,-12) {$\young(\updownarrows\updownarrows\uparrow\uparrow,\updownarrows\updownarrows,\updownarrows)$};
        \draw[->] (2,-12)--(3,-12);
        \node at (5,-12) {$\young(\updownarrows\updownarrows\updownarrows\uparrow,\updownarrows\updownarrows,\updownarrows)$};
        \draw[->] (7,-12)--(8,-12);
        \node at (10,-12) {$\young(\updownarrows\updownarrows\updownarrows\uparrow,\updownarrows\updownarrows,\updownarrows,\uparrow)$};
        \draw[->] (12,-12)--(13,-12);
        \node at (15,-12) {$\young(\updownarrows\updownarrows\updownarrows\uparrow,\updownarrows\updownarrows,\updownarrows\uparrow,\uparrow)$};
        \draw[->] (-3,-16)--(-2,-16);
        \node at (0,-16) {$\young(\updownarrows\updownarrows\updownarrows\uparrow,\updownarrows\updownarrows,\updownarrows\uparrow,\updownarrows)$};
        \draw[->] (2,-16)--(3,-16);
        \node at (5,-16) {$\young(\updownarrows\updownarrows\updownarrows\uparrow,\updownarrows\updownarrows,\updownarrows\updownarrows,\updownarrows)$};
        \draw[->] (7,-16)--(8,-16);
        \node at (10,-16) {$\young(\updownarrows\updownarrows\updownarrows\updownarrows,\updownarrows\updownarrows,\updownarrows\updownarrows,\updownarrows)$};
        \draw[->] (12,-16)--(13,-16);
        \node at (15,-16) {$\dots\dots$};
    \end{tikzpicture}
    \caption{Construction of the pair of interlacing Young diagrams from the ball configuration $\eta = 110011101100011000\dots$. In this figure, $(\mu^{\uparrow}(x), \mu^{\downarrow}(x))$ are superimposed. The symbols $\uparrow$ and $\downarrow$ indicate the shape of Young diagram $\mu^{\uparrow}(x)$ and $\mu^{\downarrow}(x)$, respectively, and $\updownarrows$ indicates the overlapped area.}
    \label{fig:ex_altyoung}
\end{figure}

In Figure \ref{fig:ex_altyoung}, an example of the process of construction of $(\mu^{\uparrow}(x), \mu^{\downarrow}(x))$ is shown. Now we observe that by comparing Figures \ref{fig:seatnumber_conf} and \ref{fig:ex_altyoung}, the relation $\lambda^{\uparrow}_{k}(x) - \lambda^{\downarrow}_{k}(x) = \mathcal{W}_{k}(x)$ holds for our working example. Actually, we can prove the same relation for any ball configuration $\eta \in \Omega$ as follows. 

\begin{proposition}\label{prop:alt}
Suppose that $\eta \in \Omega$. Then, the following relation between the seat-numbers and the pair of interlacing Young diagrams
\begin{align}\label{eq:lambda_seat}
    \lambda_k^{\sigma}(x)=\sum_{y=1}^x \eta^{\sigma}_{k}(y),    
\end{align}
holds for any $\sigma=\uparrow, \downarrow$, $x \ge 0$ and $k \in \N$.
In particular, we have
    \begin{align}\label{eq:lambda_seat2}
        \mathcal{W}_{\ell}(x) = \lambda^{\uparrow}_{\ell}(x)-\lambda^{\downarrow}_{\ell}(x),
    \end{align}
and
    \begin{align}\label{eq:lambda_seat3}
        m^{\sigma}_{k}(x) = \lambda_k^{\sigma}(x) - \lambda_{k+1}^{\sigma}(x),
    \end{align}
    where $m^{\sigma}_{k}(x)$ is defined in \eqref{eq:updowndiff}. In addition, 
    \begin{align}
        k^{\uparrow}(x) &= \sup \{k \ge 1 \ ; \ \mathcal{W}_{\ell}(x)=1, \text{ for all} \ 1 \le \ell \le k\}, \label{eq:lambda_seat4} \\
        k^{\downarrow}(x) &= \sup \{k \ge 1 \ ; \ \mathcal{W}_{\ell}(x)=0, \textrm{ for all} \ 1 \le  \ell \le k\}.
    \end{align}
\end{proposition}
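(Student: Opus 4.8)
The plan is to prove the fundamental identity \eqref{eq:lambda_seat} by induction on $x$, and then obtain \eqref{eq:lambda_seat2}, \eqref{eq:lambda_seat3} and \eqref{eq:lambda_seat4} as immediate consequences. The base case $x=0$ is trivial: both sides of \eqref{eq:lambda_seat} vanish since $\mu^{\sigma}(0)=\emptyset$ and the sum is empty. For the inductive step, assume \eqref{eq:lambda_seat} holds at $x$ for all $k\in\N$ and both $\sigma$. Combining this with \eqref{eq:lem:1_1} gives $\lambda^{\uparrow}_k(x)-\lambda^{\downarrow}_k(x)=\mathcal{W}_k(x)$ for every $k$. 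Since $(\mu^{\uparrow}(x),\mu^{\downarrow}(x))$ is a pair of interlacing Young diagrams we have $k^{\uparrow}(x)<\infty$, and by the definition of $k^{\uparrow}$ (resp. $k^{\downarrow}$, which is finite exactly when $\mu^{\uparrow}(x)\neq\mu^{\downarrow}(x)$) this integer is precisely the length of the initial run of $1$'s (resp. $0$'s) of the sequence $(\mathcal{W}_\ell(x))_{\ell\ge 1}$. In particular $\mathcal{W}_\ell(x)=1$ for $\ell\le k^{\uparrow}(x)$ and $\mathcal{W}_{k^{\uparrow}(x)+1}(x)=0$, while $\mathcal{W}_\ell(x)=0$ for $\ell\le k^{\downarrow}(x)$ and $\mathcal{W}_{k^{\downarrow}(x)+1}(x)=1$ whenever $\mu^{\uparrow}(x)\neq\mu^{\downarrow}(x)$.

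Next I would run through the three cases of the construction of $(\mu^{\uparrow}(x+1),\mu^{\downarrow}(x+1))$ and match the prescribed change of $\lambda^{\sigma}$ with the seat-number increments $\eta^{\sigma}_k(x+1)$ computed from \eqref{eq:kup_alt} and \eqref{eq:kdown_alt}. If $\eta(x+1)=1$, then \eqref{eq:kup_alt} shows that $\eta^{\uparrow}_k(x+1)=(1-\mathcal{W}_k(x))\prod_{\ell=1}^{k-1}\mathcal{W}_\ell(x)$ equals $1$ exactly when $k=k^{\uparrow}(x)+1$ and is $0$ otherwise, whereas $\eta^{\downarrow}_k(x+1)=0$ for all $k$; this is exactly the modification of $\lambda^{\sigma}$ prescribed by case (1) of the algorithm, so the identity propagates to $x+1$. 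If $\eta(x+1)=0$ and $\mu^{\uparrow}(x)\neq\mu^{\downarrow}(x)$, then \eqref{eq:kdown_alt} shows $\eta^{\downarrow}_k(x+1)=\mathcal{W}_k(x)\prod_{\ell=1}^{k-1}(1-\mathcal{W}_\ell(x))$ equals $1$ exactly when $k=k^{\downarrow}(x)+1$ and $0$ otherwise, while $\eta^{\uparrow}_k(x+1)=0$, matching case (2). Finally, if $\eta(x+1)=0$ and $\mu^{\uparrow}(x)=\mu^{\downarrow}(x)$, then $\mathcal{W}_k(x)=0$ for every $k$ by the inductive hypothesis, so $\eta^{\sigma}_k(x+1)=0$ for all $k$ and both $\sigma$ by \eqref{eq:kup_alt} and \eqref{eq:kdown_alt}, matching case (3). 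In all three cases $\lambda^{\sigma}_k(x+1)-\lambda^{\sigma}_k(x)=\eta^{\sigma}_k(x+1)$, which closes the induction and establishes \eqref{eq:lambda_seat}.

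Given \eqref{eq:lambda_seat}, the remaining assertions are immediate: \eqref{eq:lambda_seat2} follows from \eqref{eq:lem:1_1}, \eqref{eq:lambda_seat3} follows from the definition \eqref{eq:updowndiff} of $m^{\sigma}_k$, and \eqref{eq:lambda_seat4} follows by combining \eqref{eq:lambda_seat2} with the definitions of $k^{\uparrow}(x)$ and $k^{\downarrow}(x)$ (the computation already carried out in the inductive step). The only point demanding a little care is the identification, in the cases $\eta(x+1)=1$ and $\eta(x+1)=0$ with $\mu^{\uparrow}(x)\neq\mu^{\downarrow}(x)$, of the unique index $k$ with $\eta^{\sigma}_k(x+1)=1$ as $k^{\sigma}(x)+1$: this rests on the elementary observation that $k^{\sigma}(x)$ records the length of the initial constant run of $(\mathcal{W}_\ell(x))_\ell$, together with the finiteness of $k^{\uparrow}(x)$ (automatic for interlacing diagrams) and of $k^{\downarrow}(x)$ (equivalent to $\mu^{\uparrow}(x)\neq\mu^{\downarrow}(x)$). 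Everything else is routine bookkeeping.
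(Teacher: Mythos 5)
Your proposal is correct and follows essentially the same route as the paper's proof: induction on $x$, using the induction hypothesis together with \eqref{eq:lem:1_1} to get $\lambda^{\uparrow}_k(x)-\lambda^{\downarrow}_k(x)=\mathcal{W}_k(x)$, identifying $k^{\sigma}(x)$ with the length of the initial constant run of $(\mathcal{W}_\ell(x))_\ell$, and matching the increment of $\lambda^{\sigma}$ against $\eta^{\sigma}_k(x+1)$ in each of the three cases of the construction. Your write-up is in fact slightly more explicit than the paper's about the case analysis and the derivation of \eqref{eq:lambda_seat4}, but the underlying argument is identical.
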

\begin{proof}
    We prove this by induction on $x$. The statement clearly holds if $x=0$. Then, suppose \eqref{eq:lambda_seat} holds for some $x \in \Z_{\ge 0}$ for any $\sigma=\uparrow, \downarrow$ and $k \in \N$. Then, 
    \[
    \lambda_k^{\uparrow}(x) - \lambda_k^{\downarrow}(x)  =\sum_{y=1}^x\eta^{\uparrow}_{k}(y) - \sum_{y=1}^x\eta^{\downarrow}_{k}(y) = \mathcal{W}_k(x).
    \]
   Now suppose $\eta(x+1)=1$. Then, by this characterization and the definition of the seat-number configuration, $\eta^{\uparrow}_{k^{\uparrow}(x)+1}(x+1)=1$. This implies
   \[
 \sum_{y=1}^{x+1}\eta^{\sigma}_{k}(y) -\sum_{y=1}^{x}\eta^{\sigma}_{k}(y) = \mathbf{1}_{\{k= k^{\uparrow}(x)+1, \sigma=\uparrow\}}.
\]
On the other hand, by construction of the sequence of interlacing Young diagrams,
\begin{equation} \label{eq:difference_lambda_x+1_x}
    \lambda_k^{\sigma}(x+1) -\lambda_k^{\sigma}(x) = \mathbf{1}_{\{k= k^{\uparrow}(x)+1, \sigma=\uparrow\}}.
\end{equation}
Hence, by combining \eqref{eq:difference_lambda_x+1_x} with the induction assumption, the equality
\[
\lambda_k^{\sigma}(x+1) = \sum_{y=1}^{x+1}\eta^{\sigma}_{k}(y)
\]
holds for any $\sigma$ and $k$ if $\eta(x+1)=1$. For the case $\eta(x+1)=0$, $r(x+1)=1$ if $W_{\infty}(x)=0$ and $\eta^{\downarrow}_{k^{\downarrow}(x)+1}(x+1)=1$ if $W_{\infty}(x) \neq 0$. Noting that $\mu^{\uparrow}(x)=\mu^{\downarrow}(x)$ is equivalent to $k^{\downarrow}(x)=\infty$, and so to $W_{\infty}(x)=0$, we can also prove the result in the same manner. 
\end{proof}

\begin{remark}\label{rem:pair_seat}
    Note that from \eqref{eq:lambda_seat}, if we obtain the seat number configuration of $\eta \in \Omega$, then the sequence of pairs of interlacing Young diagrams corresponding to $\eta$ can be obtained via the following simple rules. Assume that we have constructed $\left(\mu^{\uparrow}(x),\mu^{\downarrow}(x) \right)$. 
        \begin{enumerate}
            \item[(1')] If $\eta^{\uparrow}_{k}{(x+1)} = 1$ for some $k \in \N$, then $\mu^{\downarrow}(x+1) = \mu^{\downarrow}(x)$ and $\mu^{\uparrow}(x+1)$ is obtained by adding a box to $\mu^{\uparrow}(x)$ at one of the {rows} with length $k-1$. 
            \item[(2')] If $\eta^{\downarrow}_{k}{(x+1)} = 1$ for some $k \in \N$, then $\mu^{\uparrow}(x+1) = \mu^{\uparrow}(x)$ and $\mu^{\downarrow}(x+1)$ is obtained by adding a box to $\mu^{\downarrow}(x)$ at one of the {rows} with length $k-1$.
            \item[(3')] If $r{(x+1)} = 1$, then we set $\left(\mu^{\uparrow}(x+1),\mu^{\downarrow}(x+1) \right) = \left(\mu^{\uparrow}(x),\mu^{\downarrow}(x) \right)$.
        \end{enumerate}
    For example, by following the above rules, we obtain the same sequence of the pairs of the interlacing Young {diagrams} in Figure \ref{fig:ex_altyoung} from the seat number configuration of $\eta = 110011101100011000\dots$ shown in Figure \ref{fig:ex_altyoung}. 
\end{remark}

Next, to reveal the relation with the original KKR bijection, we introduce the $k$-th $\sigma$ energy $E^{\sigma}_k(x)$ and the $k$-th $\sigma$ vacancy $p^{\sigma}_k(x)$ as
\[
E^{\sigma}_k(x) := \sum_{i \in \N} \min\{ \mu^{\sigma}_{i}(x), k\}, \quad 
p^{\sigma}_k(x) := x - 2 E^{\sigma}_k(x).
\]
Since
\[
\sum_{i \in \N} \min\{ \mu^{\sigma}_i(x), k\} = \sum_{i \in \N} \sum_{\ell=1}^k \mathbf{1}_{\{\mu^{\sigma}_i(x) \ge \ell\}} = \sum_{\ell=1}^k \lambda_{\ell}^{\sigma}(x),
\]
by Proposition \ref{prop:alt}, we can also rewrite
\[
E^{\sigma}_k(x) =\sum_{\ell=1}^k  \sum_{y=1}^x\eta^{\sigma}_{k}(y)
\]
and
\begin{align}
    p^{\sigma}_k(x) & = x - 2 \sum_{\ell=1}^k  \sum_{y=1}^x\eta^{\sigma}_{k}(y) = x -  \sum_{\ell=1}^k  \sum_{y=1}^x(\eta^{\sigma}_{k}(y) + \eta^{\check{\sigma}}_{k}(y)) - \sum_{\ell=1}^k  \sum_{y=1}^x(\eta^{\sigma}_{k}(y) - \eta^{\check{\sigma}}_{k}(y))\\
     & =\xi_k(x) - \sum_{\ell=1}^k \mathcal{W}^{\sigma}_{\ell}(x)
\end{align}
where $\check{\sigma}$ is the opposite arrow to $\sigma$,  $\mathcal{W}^{\uparrow}_k=\mathcal{W}_k$ and $\mathcal{W}^{\downarrow}_k=-\mathcal{W}_k$. 

\medskip

The following property of the $k$-th $\sigma$ vacancy is useful {for understanding} the relation between the seat number configuration and the riggings.

\begin{lemma}\label{lem:alt3}
Consider a ball configuration $\eta \in \Omega$. Then, the following statements hold.

\begin{enumerate}
    \item Suppose $\eta^{\uparrow}_k(x)=1$. Then 
    \begin{equation} \label{eq:p_arrow_xi}
        p^{\uparrow}_k(x) =\xi_k(x)-k
    \end{equation}
    and for any $x' \ge x$, $p^{\uparrow}_k(x) \le p^{\uparrow}_k(x')$. Moreover, for $x' \ge x$, $p^{\uparrow}_k(x) = p^{\uparrow}_k(x')$ holds if and only if 
    \[
    \sum_{y=x+1}^{x'}\sum_{\ell \ge k+1}(\eta^{\uparrow}_{\ell}(y) + \eta^{\downarrow}_{\ell}(y)) +\sum_{y=x+1}^{x'}r(y) =0 \quad \text{and} \quad \sum_{\ell=1}^k \mathcal{W}_{\ell}(x')=k.
    \]
    
    \item Suppose $\eta^{\downarrow}_k(x)=1$. Then 
    \[
        p^{\downarrow}_k(x) =\xi_k(x)
    \]
    and for any $x' \ge x$, $p^{\downarrow}_k(x) \le p^{\downarrow}_k(x')$.
    Moreover, for $x' \ge x$, $p^{\downarrow}_k(x) = p^{\downarrow}_k(x')$ holds if and only if 
    \[
        \sum_{y=x+1}^{x'}\sum_{\ell \ge k+1}(\eta^{\uparrow}_{\ell}(y) + \eta^{\downarrow}_{\ell}(y)) +\sum_{y=x+1}^{x'}r(y) =0 \quad \text{and} \quad \sum_{\ell=1}^k \mathcal{W}_{\ell}(x')=0.
    \]
\end{enumerate}
\end{lemma}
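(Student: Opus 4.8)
The plan is to reduce the whole statement to the identity for the $k$-th $\sigma$ vacancy established just above the lemma, namely $p^{\uparrow}_k(x)=\xi_k(x)-\sum_{\ell=1}^{k}\mathcal{W}_{\ell}(x)$ and $p^{\downarrow}_k(x)=\xi_k(x)+\sum_{\ell=1}^{k}\mathcal{W}_{\ell}(x)$, combined with Lemma \ref{lem:1} and two elementary monotonicity observations. First I would dispatch the two equalities: if $\eta^{\uparrow}_k(x)=1$, then Lemma \ref{lem:1}(i) together with \eqref{eq:lem:1_1} gives $\mathcal{W}_{\ell}(x)=1$ for all $1\le\ell\le k$, so $\sum_{\ell=1}^{k}\mathcal{W}_{\ell}(x)=k$ and hence $p^{\uparrow}_k(x)=\xi_k(x)-k$, which is \eqref{eq:p_arrow_xi}; symmetrically, if $\eta^{\downarrow}_k(x)=1$, then Lemma \ref{lem:1}(ii) together with \eqref{eq:lem:1_1} gives $\mathcal{W}_{\ell}(x)=0$ for all $1\le\ell\le k$, so $\sum_{\ell=1}^{k}\mathcal{W}_{\ell}(x)=0$ and $p^{\downarrow}_k(x)=\xi_k(x)$.

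Next I would isolate the two facts driving the monotonicity and the equality conditions. From the definition of $\xi_k$, for any $x'\ge x$ one has
\[
\xi_k(x')-\xi_k(x)=\sum_{y=x+1}^{x'}\sum_{\ell\ge k+1}\bigl(\eta^{\uparrow}_{\ell}(y)+\eta^{\downarrow}_{\ell}(y)\bigr)+\sum_{y=x+1}^{x'}r(y),
\]
which is a sum of non-negative integers; thus $\xi_k$ is non-decreasing, and this increment equals $0$ exactly when the quantity appearing in the equality conditions of the lemma vanishes. Also, since $\mathcal{W}_{\ell}(x')\in\{0,1\}$, one always has $0\le\sum_{\ell=1}^{k}\mathcal{W}_{\ell}(x')\le k$, with the value $k$ (resp. $0$) attained if and only if $\mathcal{W}_{\ell}(x')=1$ (resp. $\mathcal{W}_{\ell}(x')=0$) for every $1\le\ell\le k$, i.e. if and only if $\sum_{\ell=1}^{k}\mathcal{W}_{\ell}(x')=k$ (resp. $\sum_{\ell=1}^{k}\mathcal{W}_{\ell}(x')=0$).

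Finally I would combine these. In the $\uparrow$ case, using $\sum_{\ell=1}^{k}\mathcal{W}_{\ell}(x)=k$ from the first step,
\[
p^{\uparrow}_k(x')-p^{\uparrow}_k(x)=\bigl(\xi_k(x')-\xi_k(x)\bigr)+\Bigl(k-\sum_{\ell=1}^{k}\mathcal{W}_{\ell}(x')\Bigr),
\]
a sum of two non-negative terms, so $p^{\uparrow}_k(x)\le p^{\uparrow}_k(x')$, and equality holds if and only if both terms vanish, which by the two facts above is precisely the conjunction stated in part (1). The $\downarrow$ case is the mirror image: since $\sum_{\ell=1}^{k}\mathcal{W}_{\ell}(x)=0$, one gets $p^{\downarrow}_k(x')-p^{\downarrow}_k(x)=\bigl(\xi_k(x')-\xi_k(x)\bigr)+\sum_{\ell=1}^{k}\mathcal{W}_{\ell}(x')$, again a sum of two non-negative terms, vanishing if and only if $\xi_k(x')=\xi_k(x)$ and $\sum_{\ell=1}^{k}\mathcal{W}_{\ell}(x')=0$, as claimed in part (2).

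The argument is essentially bookkeeping once the formula for $p^{\sigma}_k$ is in hand; the only point demanding care is the sign convention $\mathcal{W}^{\downarrow}_{\ell}=-\mathcal{W}_{\ell}$, which is exactly what turns the $\uparrow$ saturation condition $\sum_{\ell=1}^{k}\mathcal{W}_{\ell}(x')=k$ into the $\downarrow$ depletion condition $\sum_{\ell=1}^{k}\mathcal{W}_{\ell}(x')=0$. I do not expect a genuine obstacle.
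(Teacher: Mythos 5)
Your proposal is correct and follows essentially the same route as the paper: both rest on the identity $p^{\sigma}_k(x)=\xi_k(x)-\sum_{\ell=1}^{k}\mathcal{W}^{\sigma}_{\ell}(x)$ established just before the lemma, use Lemma \ref{lem:1} to evaluate $\sum_{\ell=1}^{k}\mathcal{W}_{\ell}(x)$ at a $(k,\sigma)$-seat, and decompose $p^{\sigma}_k(x')-p^{\sigma}_k(x)$ into the two non-negative terms whose simultaneous vanishing is exactly the stated equality condition. The only cosmetic difference is that you write out the $\downarrow$ case explicitly where the paper declares it analogous.
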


\begin{proof}
Let us only show (1), as the proof (2) is completely analogous. Since $\eta^{\uparrow}_k(x)=1$ implies $\mathcal{W}_{\ell}(x)=1$ for all $1 \le \ell \le k$, \eqref{eq:p_arrow_xi} holds. Let $x' \ge x$. Then,
\[
p^{\uparrow}_k(x') - p^{\uparrow}_k(x) = \xi_k(x') -\xi_k(x) + k-\sum_{\ell=1}^k\mathcal{W}_{\ell}(x')
\]
and $\xi_k(x') -\xi_k(x) \ge 0$, $k-\sum_{\ell=1}^k\mathcal{W}_{\ell}(x') \ge 0$ implies the inequality $p^{\uparrow}_k(x) \le p^{\uparrow}_k(x')$. The last condition in the statement is equivalent to $\xi_k(x') -\xi_k(x)=0$ and $k-\sum_{\ell=1}^k\mathcal{W}_{\ell}(x')=0$, which is obviously equivalent to $p^{\uparrow}_k(x) = p^{\uparrow}_k(x')$.
\end{proof}

We now introduce {\it refined} riggings $\mathbf{J}^{\sigma}(x)=(J_k^{\sigma}(x), \ 1 \le k  \le \mu^{\sigma}_1(x))$ such that $J^{\sigma}_k(x)=(J^{\sigma}_{k,j}(x), \ 1 \le j \le m^{\sigma}_k(x))$ where $m^{\sigma}_k(x)=\lambda^{\sigma}_k(x)-\lambda^{\sigma}_{k+1}(x) =|\{i \ ; \mu^{\sigma}_i(x) =k\}|$ from Proposition \ref{prop:alt}. We order them as 
\[
J^{\sigma}_{k,1}(x) \le J^{\sigma}_{k,2}(x) \dots \le J^{\sigma}_{k,m^{\sigma}_k(x)}(x)
\]
to make the notation simple in the {following} argument.
We define the riggings recursively, although later in the same subsection, we will show that they can be defined more directly in terms of seat numbers. 

Let $\mathbf{J}^{\sigma}(0)=\emptyset$ for $\sigma =\uparrow, \downarrow$. We will construct $\mathbf{J}^{\sigma}(x+1)$ from $\mathbf{J}^{\sigma}(x)$ by considering three cases separately. 

\underline{Case 1 : $\eta_k^{\sigma}(x+1)=0$ for all $k$.} If $\eta_k^{\sigma}(x+1)=0$ for all $k$, or equivalently $\mu^{\sigma}(x+1)=\mu^{\sigma}(x)$, then we also keep the rigging as $\mathbf{J}^{\sigma}(x+1)=\mathbf{J}^{\sigma}(x)$. 

\underline{Case 2 : $\eta_1^{\sigma}(x+1)=1$.} If $\eta_1^{\sigma}(x+1)=1$, or equivalently a row of length $1$ is added to obtain $\mu^{\sigma}(x+1)$ from $\mu^{\sigma}(x)$, we append the value $p^{\sigma}_{1}(x+1)$ to $J^{\sigma}_{1}(x)$ and obtain $J^{\sigma}_{1}(x+1)$. More precisely,
\[
J^{\sigma}_{\ell}(x+1)=J^{\sigma}_{\ell}(x) \quad \ell \neq 1
\]
and
%\[
%J^{\sigma}_{1}(x+1) =((J^{\sigma}_{1,j}(x), \ 1 \le j \le m^{\sigma}_{1}(x)), p^{\sigma}_{1}(x+1)).
%\]
\[
J^{\sigma}_{1}(x+1) = (J^{\sigma}_{1,1}(x), \dots J^{\sigma}_{1,m^{\sigma}_{1}(x)}(x) , p^{\sigma}_{1}(x+1)).
\]

\underline{Case 3 : $\eta_{k+1}^{\sigma}(x+1)=1$ for some $k \ge 1$. } If $\eta_{k+1}^{\sigma}(x+1)=1$ for some $k \ge 1$, or equivalently a row of length $k$ is replaced by one of length $k+1$ to obtain $\mu^{\sigma}(x+1)$ from $\mu^{\sigma}(x)$, we remove the largest entry from $J^{\sigma}_k(x)$ and append $p^{\sigma}_{k+1}(x+1)$ to $J^{\sigma}_{k+1}(x)$ to obtain rigging $\mathbf{J}^{\sigma}(x+1)$. More precisely,
\[
J^{\sigma}_{\ell}(x+1)=J^{\sigma}_{\ell}(x) \quad \ell \neq k, k+1,
\]
%\[
%J^{\sigma}_{k}(x+1)= (J^{\sigma}_{k,j}(x), \ 1 \le j \le m^{\sigma}_k(x)-1)
%\]
\[
J^{\sigma}_{k}(x+1)= (J^{\sigma}_{k,1}(x), \dots, J^{\sigma}_{k,m^{\sigma}_k(x)-1}(x))
\]
and
%\[
%J^{\sigma}_{k+1}(x+1) =((J^{\sigma}_{k+1,j}(x), \ 1 \le j \le m^{\sigma}_{k+1}(x)), p^{\sigma}_{k+1}(x+1)).
%\]
%\matteo{
\[
J^{\sigma}_{k+1}(x+1) =(J^{\sigma}_{k+1,j}(x), \dots, J^{\sigma}_{k+1,m^{\sigma}_{k+1}(x)}(x),  p^{\sigma}_{k+1}(x+1)).
\]
%}

\medskip

Now we analyze properties of this newly defined rigging. By the way of construction, it is clear that for any  $J^{\sigma}_{k,j}(x)$ in the rigging $\mathbf{J}^{\sigma}(x)$, there exists $y \le x$ such that $\eta^{\sigma}_k(y)=1$ and $J^{\sigma}_{k,j}(x)=p^{\sigma}_k(y)$, which is not necessarily unique.  
%Then, by Lemma \ref{lem:alt3}, we can conclude that the newly added rigging is always the biggest, namely $J^{\sigma}_{k,j}(x) \le p^{\sigma}_k(x+1)$ holds for any $x$, $k$ and $1 \le j \le m^{\sigma}_k(x)$. 
In the next proposition, we give an explicit expression of one of such $y=y(x,k,j,\sigma)$.
For $1 \le j \le m^{\sigma}_k(x)$, let 
\[
t_k^{\sigma}(x,j):=\max \{ y \le x  \ ; \ m^{\sigma}_k(y)=j, \eta^{\sigma}_k(y)=1\}.
\]
Since $|m^{\sigma}_k(y+1)-m^{\sigma}_k(y)| \le 1$ for any $y$, $m^{\sigma}_k(0)=0$ and $m^{\sigma}_k(y)$ increases only when $\eta^{\sigma}_k(y)=1$, the above set is not empty, namely $1 \le t_k^{\sigma}(x,j) \le x$. Moreover, $t_k^{\sigma}(x,1) < t_k^{\sigma}(x,2 ) < \cdots < t_k^{\sigma}(x,m^{\sigma}_k(x))$.

\begin{proposition}\label{prop:alt2}
For any $x \in \Z_{\ge 0}, k \in \N$, $\sigma \in \{\uparrow,\downarrow\}$ and $1 \le j \le m^{\sigma}_k(x)$, we have
\begin{equation} \label{eq:J_and_p}
    J^{\sigma}_{k,j}(x)=p^{\sigma}_k(t_k^{\sigma}(x,j)).
\end{equation}
\end{proposition}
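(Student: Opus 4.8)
The plan is to prove \eqref{eq:J_and_p} by induction on $x$, tracking carefully how both sides change under each of the three cases in the recursive definition of $\mathbf{J}^{\sigma}(x)$. For $x=0$ the statement is vacuous since all multiplicities vanish. Assume the identity holds at $x$; I want to deduce it at $x+1$. Fix $\sigma$; by symmetry (the $\downarrow$ case is handled by applying the same argument to $1-\eta$, using the observations in Section~\ref{sec:seat}) it suffices to treat $\sigma=\uparrow$, but in fact the argument is uniform in $\sigma$, so I would just carry the $\sigma$ along. The key bookkeeping fact I would isolate first is how $t_k^{\sigma}(\cdot,j)$ behaves when we pass from $x$ to $x+1$: since $m_k^{\sigma}$ changes by at most $1$ and increases exactly when $\eta_k^{\sigma}(x+1)=1$, we have $t_k^{\sigma}(x+1,j)=t_k^{\sigma}(x,j)$ for all $j\le m_k^{\sigma}(x)$ \emph{unless} $\eta_k^{\sigma}(x+1)=1$ and $j$ equals the new top index $m_k^{\sigma}(x+1)=m_k^{\sigma}(x)+1$, in which case $t_k^{\sigma}(x+1,m_k^{\sigma}(x+1))=x+1$. (One must also check that $m_k^{\sigma}$ does not \emph{decrease} at $x+1$ at a place that would shift indices; by \eqref{eq:diff2} a decrease happens only when $\eta_{k+1}^{\sigma}(x+1)=1$, which removes the largest entry, consistent with Case~3.)

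With this in hand I would go case by case. In Case~1, $\mu^{\sigma}$ and hence all $m_k^{\sigma}$, all $t_k^{\sigma}(\cdot,j)$, and all $J_k^{\sigma}$ are unchanged, so the identity is immediate from the inductive hypothesis. In Case~2 ($\eta_1^{\sigma}(x+1)=1$), the only affected row length is $k=1$: the new entry appended is $p_1^{\sigma}(x+1)$ and it occupies index $j=m_1^{\sigma}(x+1)$, while $t_1^{\sigma}(x+1,m_1^{\sigma}(x+1))=x+1$; all lower indices are untouched, so \eqref{eq:J_and_p} persists. Case~3 ($\eta_{k+1}^{\sigma}(x+1)=1$) is the substantive one: one box moves from a length-$k$ row to a length-$(k+1)$ row. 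For row length $k$, the largest entry $J_{k,m_k^{\sigma}(x)}^{\sigma}(x)$ is deleted; since $m_k^{\sigma}$ drops by one, the surviving indices $1,\dots,m_k^{\sigma}(x)-1$ keep the same $t_k^{\sigma}$-values, so the hypothesis carries over. For row length $k+1$, a new entry $p_{k+1}^{\sigma}(x+1)$ is appended at index $j=m_{k+1}^{\sigma}(x+1)$, and since $\eta_{k+1}^{\sigma}(x+1)=1$ we have $m_{k+1}^{\sigma}(x+1)=m_{k+1}^{\sigma}(x)+1$ and $t_{k+1}^{\sigma}(x+1,m_{k+1}^{\sigma}(x+1))=x+1$; again all lower indices are unchanged.

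The one point that needs genuine care — and what I expect to be the main obstacle — is verifying that the ordering convention $J_{k,1}^{\sigma}(x)\le\cdots\le J_{k,m_k^{\sigma}(x)}^{\sigma}(x)$ is consistent with appending $p_k^{\sigma}(x+1)$ at the \emph{end} of the list, i.e. that $p_k^{\sigma}(x+1)$ is indeed $\ge$ all previously recorded riggings in that row. This is exactly where Lemma~\ref{lem:alt3} enters: if $\eta_k^{\sigma}(y)=1$ for some $y<x+1$ and this value $y$ is the witness for an earlier entry $J_{k,j}^{\sigma}(x)=p_k^{\sigma}(y)$, then $p_k^{\sigma}(y)\le p_k^{\sigma}(x+1)$ by the monotonicity assertion of Lemma~\ref{lem:alt3} (using that $\eta_k^{\sigma}(x+1)=1$ when a length-$(k-1)$-to-$k$ move or a fresh length-$k$ row occurs, hence $x+1\ge y$ and the hypotheses of the lemma apply). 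Once monotonicity is secured, the inductive bookkeeping above closes cleanly, and combined with the already-noted fact that every entry of $\mathbf{J}^{\sigma}(x)$ has \emph{some} witness $y$ with $\eta_k^{\sigma}(y)=1$ and $J_{k,j}^{\sigma}(x)=p_k^{\sigma}(y)$, the explicit choice $y=t_k^{\sigma}(x,j)$ is pinned down, completing the induction.
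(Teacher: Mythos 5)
Your proposal matches the paper's proof essentially step for step: induction on $x$ with the same three cases, the same bookkeeping for how $t_k^{\sigma}(\cdot,j)$ changes from $x$ to $x+1$, and the same appeal to Lemma \ref{lem:alt3} to check that the newly appended vacancy $p_k^{\sigma}(x+1)$ dominates all earlier entries in its row, so that the ordering convention places it at the top index. The argument is correct and complete as outlined.
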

\begin{proof}
    We prove \eqref{eq:J_and_p} by induction on $x$. For $x=0$, the equality trivially holds as there is no $j$ satisfying $1 \le j \le m^{\sigma}_k(x)$. Next, suppose \eqref{eq:J_and_p} holds for some $x \in \Z_{\ge 0}$ and for any $k, \sigma$ and $1 \le j \le m^{\sigma}_k(x)$. We prove that the same holds for $x+1$ by considering three cases separately as above.
    
    \underline{Case 1 : $\eta_k^{\sigma}(x+1)=0$ for all $k$.} Then by definition, $m^{\sigma}_k(x+1)=m^{\sigma}_k(x)$ and  $t_k^{\sigma}(x+1,j)=t^{\sigma}_k(x,j)$ for any $k, \sigma$ and $1 \le j \le m^{\sigma}_k(x+1)$. Also, $\mathbf{J}^{\sigma}(x+1)=\mathbf{J}^{\sigma}(x)$. Hence,
    \[
    J^{\sigma}_{k,j}(x+1)=J^{\sigma}_{k,j}(x) = p^{\sigma}_k(t_k^{\sigma}(x,j)) = p^{\sigma}_k(t_k^{\sigma}(x+1,j))
    \]
    holds for any $k, \sigma$ and $1 \le j \le m^{\sigma}_k(x+1)$.
    
    \underline{Case 2 : $\eta_1^{\sigma}(x+1)=1$.} If $\eta^{\sigma}_1(x+1)=1$, then as in Case 1, for any $\ell \neq 1$ and $1 \le j \le m^{\sigma}_\ell(x+1)$, 
    \[
    J^{\sigma}_{\ell,j}(x+1)=J^{\sigma}_{\ell,j}(x) =p^{\sigma}_\ell (t_\ell^{\sigma}(x,j)) = p^{\sigma}_\ell (t_\ell^{\sigma}(x+1,j))
    \]
    holds. On the other hand, for $\ell= 1$,  $m^{\sigma}_{1}(x+1)=m^{\sigma}_1(x)+1$ and  $t_1^{\sigma}(x+1,m^{\sigma}_{1}(x+1))=x+1$. Moreover, by Lemma \ref{lem:alt3}, for any $1\le j \le m^{\sigma}_1(x)$,
    \[
    J^{\sigma}_{1,j}(x) = p^{\sigma}_1(t_1^{\sigma}(x,j)) \le p^{\sigma}_1(x+1)
    \]
    since $\eta^{\sigma}_1(t_1^{\sigma}(x,j))=1$.
    Hence, as we order 
    \[
    J^{\sigma}_{k,1}(x+1) \le J^{\sigma}_{k,2}(x+1) \le \dots \le J^{\sigma}_{k,m^{\sigma}_1(x+1)}(x+1), 
    \]
    we have $J^{\sigma}_{1,j}(x+1)=J^{\sigma}_{1,j}(x)$ for any $1\le j \le m^{\sigma}_1(x)$ and 
    $J^{\sigma}_{1,m^{\sigma}_{1}(x+1)}=p^{\sigma}_1(x+1)$.
    Hence, for $j=m^{\sigma}_{1}(x+1)$,
       \[
    J^{\sigma}_{1,j}(x+1)=p^{\sigma}_1(x+1)= p^{\sigma}_1(t_1^{\sigma}(x+1,m^{\sigma}_{1}(x+1))) = p^{\sigma}_1(t_1^{\sigma}(x+1,j))
    \]
    holds. Also, for $1 \le j \le m^{\sigma}_1(x)$, by the definition of $t_1^{\sigma}(x,j)$, it is obvious that $t_1^{\sigma}(x+1,j)=t_1^{\sigma}(x,j)$ and so
    \[
    J^{\sigma}_{1,j}(x+1)=p^{\sigma}_1(t_1^{\sigma}(x+1,j))
    \]
    holds as well. 
    
    \underline{Case 3 : $\eta_{k+1}^{\sigma}(x+1)=1$ for some $k \ge 1$.} If $\eta^{\sigma}_{k+1}(x+1)=1$, then as in Case 1, for any $\ell \neq k, k+1$ and $1 \le j \le m^{\sigma}_\ell(x+1)$, 
    \[
    J^{\sigma}_{\ell,j}(x+1)= p^{\sigma}_\ell (t_\ell^{\sigma}(x+1,j))
    \]
    holds. Also, for $\ell=k+1$, the same relation holds by exactly the same argument as in Case 2. Finally, for $\ell=k$, $m^{\sigma}_{k}(x+1)=m^{\sigma}_k(x)-1$ and for any $1 \le j \le m^{\sigma}_{k}(x+1)$, we have  $t_k^{\sigma}(x+1,j)=t_k^{\sigma}(x,j)$, $J^{\sigma}_{k,j}(x+1)=J^{\sigma}_{k,j}(x)$ by definition. Hence, \[
    J^{\sigma}_{k,j}(x+1)=p^{\sigma}_k(t_k^{\sigma}(x+1,j))
    \]
    holds for any $1 \le j \le m^{\sigma}_{k}(x+1)$.
    %\matteo{As a conclusion, we have the assertion holds for $x+1$ for any $k, \sigma$ and $1 \le j \le m_k^{\sigma}(x+1)$. Hence, the induction step is established, which completes the proof $\to$ 
    This completes the proof. 
\end{proof}
Finally, we prove that $k^{\uparrow}$ and $k^{\downarrow}$ can be characterized in terms of the rigging and the traditional singular condition. We say $\mu_i^{\sigma}(x)$ is a singular row of $(\mu^{\sigma}(x), \mathbf{J}^{\sigma}(x))$ if $p^{\sigma}_k(x)=J^{\sigma}_{k,m^{\sigma}_{k}(x)}(x)$, where $k=\mu_i^{\sigma}(x)$. 
\begin{proposition}\label{prop:alt3}
Assume the convention that $\max\emptyset=0$. Then,
    \[
        k^{\uparrow}(x)= \max_i \{\mu^{\uparrow}_i(x) \ ; \ \mu^{\uparrow}_i(x)  \text{ is singular} \}.
    \]
Also, if $\mu^{\uparrow}(x) \neq \mu^{\downarrow}(x)$, then
\[
        k^{\downarrow}(x)= \max_i \{\mu^{\downarrow}_i(x) \ ; \ \mu^{\downarrow}_i(x)  \text{ is singular} \}.  
    \]
\end{proposition}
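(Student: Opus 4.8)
The plan is to establish each identity by proving two inequalities. For the upper bound $\max_i\{\mu^{\uparrow}_i(x) ; \mu^{\uparrow}_i(x) \text{ singular}\} \le k^{\uparrow}(x)$, suppose a row of length $k$ is singular, i.e.\ $m^{\uparrow}_k(x) \ge 1$ and $p^{\uparrow}_k(x) = J^{\uparrow}_{k,m^{\uparrow}_k(x)}(x)$. By Proposition \ref{prop:alt2} the right side equals $p^{\uparrow}_k(y)$, where $y := t^{\uparrow}_k(x,m^{\uparrow}_k(x)) \le x$ satisfies $\eta^{\uparrow}_k(y) = 1$. Applying Lemma \ref{lem:alt3}(1) to the seat at $y$ and the later point $x$, the equality $p^{\uparrow}_k(y) = p^{\uparrow}_k(x)$ forces $\sum_{\ell=1}^{k}\mathcal{W}_{\ell}(x) = k$, hence $\mathcal{W}_{\ell}(x) = 1$ for all $\ell \le k$, which by \eqref{eq:lambda_seat4} means $k \le k^{\uparrow}(x)$. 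The same computation with Lemma \ref{lem:alt3}(2) gives $k \le k^{\downarrow}(x)$ for every singular row of $\mu^{\downarrow}(x)$.

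For the reverse bound put $K := k^{\uparrow}(x)$ and assume $K \ge 1$ (the case $K = 0$ is covered by the upper bound). By Proposition \ref{prop:alt}, $\mathcal{W}_{\ell}(x) = 1$ for $\ell \le K$ and $\mathcal{W}_{K+1}(x) = 0$, so \eqref{eq:diff1} gives $m^{\uparrow}_K(x) = m^{\downarrow}_K(x) + 1 \ge 1$; since $m^{\uparrow}_K$ increases only at $(K,\uparrow)$-seats, there is at least one in $[1,x]$, and we let $\bar y$ be the largest. The heart of the argument is to show that $\mathcal{W}_K(y) = 1$ for all $\bar y \le y \le x$ and that $(\bar y, x]$ contains no record and no $(\ell,\sigma)$-seat with $\ell \ge K+1$. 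The first claim holds because on $(\bar y,x]$ we have $\mathcal{W}_K(y) - \mathcal{W}_K(y-1) = -\eta^{\downarrow}_K(y) \le 0$ (no $(K,\uparrow)$-seat past $\bar y$), while $\mathcal{W}_K$ equals $1$ at both endpoints, so $\mathcal{W}_K \equiv 1$ there. Given this, a record or an $(\ell,\downarrow)$-seat with $\ell \ge K+1$ at some $y \in (\bar y,x]$ would force $\mathcal{W}_K(y) = 0$ by Lemma \ref{lem:1}, a contradiction; and an $(\ell,\uparrow)$-seat with $\ell \ge K+1$ at $y$ would force $\mathcal{W}_{K+1}(y) = 1$ by Lemma \ref{lem:1}, so $\mathcal{W}_{K+1}$ would have to return to $0 = \mathcal{W}_{K+1}(x)$ through a $(K+1,\downarrow)$-seat $z \in (y,x]$, which in turn forces $\mathcal{W}_K(z) = 0$ --- again impossible.

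Once this is established, $m^{\uparrow}_K$ is constant on $[\bar y,x]$ (it changes only at $(K,\uparrow)$- and $(K+1,\uparrow)$-seats), so $t^{\uparrow}_K(x,m^{\uparrow}_K(x)) = \bar y$ and $J^{\uparrow}_{K,m^{\uparrow}_K(x)}(x) = p^{\uparrow}_K(\bar y)$ by Proposition \ref{prop:alt2}. The absence of records and higher seats in $(\bar y,x]$ is exactly the first condition in Lemma \ref{lem:alt3}(1), and $\sum_{\ell=1}^{K}\mathcal{W}_{\ell}(x) = K$ since $K = k^{\uparrow}(x)$, so Lemma \ref{lem:alt3}(1) applied to the seat at $\bar y$ and the point $x$ yields $p^{\uparrow}_K(\bar y) = p^{\uparrow}_K(x)$, i.e.\ the rows of length $K$ are singular; this gives $k^{\uparrow}(x) \le \max_i\{\mu^{\uparrow}_i(x) ; \mu^{\uparrow}_i(x) \text{ singular}\}$. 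The identity for $k^{\downarrow}(x)$ is obtained by the dual argument with Lemma \ref{lem:alt3}(2): one sets $K' := k^{\downarrow}(x)$, uses the hypothesis $\mu^{\uparrow}(x) \ne \mu^{\downarrow}(x)$ (equivalently $K' < \infty$) to get $\mathcal{W}_{K'+1}(x) = 1$ and $m^{\downarrow}_{K'}(x) \ge 1$, lets $\hat y$ be the largest $(K',\downarrow)$-seat in $[1,x]$, and shows $\mathcal{W}_{K'} \equiv 0$ on $[\hat y,x]$; here it is $\mathcal{W}_{K'}(z-1) = 1$ (for a hypothetical higher $\uparrow$-seat at $z$) and $\mathcal{W}_{K'+1}(x) = 1$ (for a hypothetical record or higher $\downarrow$-seat) that produce the needed contradictions. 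I expect this obstruction step --- ruling out records and seats of level $\ge K+1$ on $(\bar y,x]$, resp.\ on $(\hat y,x]$ --- to be the main point requiring care, since it is where the monotonicity of $\mathcal{W}_K$ combines with the pointwise form of the update rule \eqref{eq:dynamics} via Lemma \ref{lem:1}; the remainder is bookkeeping with the definitions of $t^{\sigma}_k$ and $p^{\sigma}_k$.
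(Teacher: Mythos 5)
Your proposal is correct and follows essentially the same route as the paper: the upper bound via Proposition \ref{prop:alt2} combined with Lemma \ref{lem:alt3}, and the lower bound by locating the last $(K,\sigma)$-seat $\bar y$ in $[1,x]$, showing that $(\bar y,x]$ contains no records and no $(\ell,\sigma')$-seats with $\ell\ge K+1$, and then applying Lemma \ref{lem:alt3} again to conclude singularity. The only difference is presentational — you rule out the higher seats by contradiction using the monotonicity of $\mathcal{W}_{K+1}$, whereas the paper computes the relevant sums $\sum_{\ell\ge k^*+1}\sum_y\eta^{\sigma}_{\ell}(y)$ directly — but both rest on the same occupancy facts from Lemma \ref{lem:1}.
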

\begin{proof}
    If $\mu_i^{\sigma}(x)$ is singular, then for $k=\mu_i^{\sigma}(x)$, we have
    \[
    p^{\sigma}_k(x)=J^{\sigma}_{k,m^{\sigma}_{k}(x)}(x) = p^{\sigma}_k\left(t_k^{\sigma}\left(x,m^{\sigma}_{k}(x)\right)\right),
    \]
    where we apply Proposition \ref{prop:alt2} for the second equality. Then, since $x \ge t_k^{\sigma}\left(x,m^{\sigma}_{k}(x)\right)$ and 
    
    \noindent $\eta^{\sigma}_k\left(t_k^{\sigma}(x,m^{\sigma}_{k}(x)) \right)=1$, by Lemma \ref{lem:alt3}, we have $\sum_{\ell=1}^k \mathcal{W}_{\ell}(x)=k$ if $\sigma=\uparrow$ and $\sum_{\ell=1}^k \mathcal{W}_{\ell}(x)=0$ if $\sigma=\downarrow$. Thus from \eqref{eq:lambda_seat4}, we obtain 
    \[
        k^{\sigma}(x) \ge \max_i \{\mu^{\sigma}_i(x) \ ; \ \mu^{\sigma}_i(x)  \text{ is singular} \}.
    \]
    Hence, it is sufficient to prove that if  $1 \le  k^{\sigma}(x) < \infty$, then the row satisfying $\mu^{\sigma}_i(x)=k^{\sigma}(x)$ exists in $\mu^{\sigma}(x)$ and it is singular. In the rest of the proof, we prove this assertion. 
    
    Suppose $1 \le  k^{\sigma}(x) < \infty$. Observe that at least one row with length $k^{\sigma}$ exists in $\mu^{\sigma}(x)$. To simplify the notation, denote $k^{\sigma}(x)$ by $k^*$. Since $\lambda^{\sigma}_{k^*} (x) \ge 1$ and $\lambda^{\sigma}_{k^*} (x)=\sum_{y=1}^{x}\eta^{\sigma}_{k^*}(y)$, we define $x^*$ as the maximal $y$ satisfying $y \le x$ and $\eta^{\sigma}_{k^*}(y)=1$, or in formula
    \[
    x^* :=\max\{y \ ; \ y \le x \ , \ \eta^{\sigma}_{k^*}(y)=1 \}.
    \]
    From now on, we consider the case $\sigma=\uparrow$. Then, from Lemma \ref{lem:1} (i), for any $1 \le \ell \le k^*$, we have $\mathcal{W}_{\ell}(x^*)=1$. Also, from \eqref{eq:lambda_seat4},
    for any $1 \le \ell \le k^*$, $
    \mathcal{W}_{\ell}(x)=1$
         and $\mathcal{W}_{k^*+1}(x)=0$. Hence, we have
     \[
      \mathcal{W}_{k^*}(x) -  \mathcal{W}_{k^*}(x^*) =\sum_{y=x^*+1}^x (\eta_{k^*}^{\uparrow}(y) -\eta_{k^*}^{\downarrow}(y) )=0.
     \]
     On the other hand, by the construction of $x^*$,
        \begin{align}\label{eq:prop3_0}
            \sum_{y=x^*+1}^x \eta_{k^*}^{\uparrow}(y) =0.
        \end{align}
     Hence, $\sum_{y=x^*+1}^x \eta_{k^*}^{\uparrow}(y)= \sum_{y=x^*+1}^x \eta_{k^*}^{\downarrow}(y)=0$. This implies that the seat $k^*$ is occupied for any $y \in [x^*, x]$ and therefore
        \begin{align}\label{eq:prop3_1}
            \sum_{\ell \ge k^*+1}\sum_{y=x^*+1}^x \eta_{\ell}^{\downarrow}(y)=0,
        \end{align}
     since if a ball leaves a seat $\ell \ge k^*+1$, then the ball at seat $k^*$ must have already left. Then, from \eqref{eq:lem:1_1}, \eqref{eq:prop3_1} and $\mathcal{W}_{k^*+1}(x)=0$, we also have $\sum_{y=x^*+1}^x \eta_{k^*+1}^{\uparrow}(y)=0$. Namely, the seat $k^* +1$ is empty for any $y \in [x^*, x]$. This also implies that 
        \begin{align}\label{eq:prop3_2}
            \sum_{\ell \ge k^*+1}\sum_{y=x^*+1}^x \eta_{\ell}^{\uparrow}(y)=0.
        \end{align}
    Finally, since the seat $k^*$ is occupied for any $y \in [x^*, x]$, it is obvious that
        \begin{align}\label{eq:prop3_3}
            \sum_{y=x^*+1}^x r(y)=0.
        \end{align}
    Combining \eqref{eq:prop3_1},\eqref{eq:prop3_2} and \eqref{eq:prop3_3} we have
    \[
    \sum_{\ell \ge k^*+1}\sum_{y=x^*+1}^x( \eta_{\ell}^{\uparrow}(y) + \eta_{\ell}^{\downarrow}(y)) + \sum_{y=x^*+1}^x r(y)=0.
    \]
 Then, from \eqref{eq:lambda_seat4} and Lemma \ref{lem:alt3}, we have
    \[
    p_{k^*}^{\uparrow}(x^*)= p_{k^*}^{\uparrow}(x).
    \]
    Finally, we check that $x^*=t^{\uparrow}_{k^*}(x,m_{k^*}^{\uparrow}(x))$. For this, we only need to prove that $m^{\uparrow}_{k^*}(x^*)=m_{k^*}^{\uparrow}(x)$ and this is equivalent to $\sum_{y=x^*+1}^x \eta^{\uparrow}_{k^*}(y)= \sum_{y=x^*+1}^x \eta^{\uparrow}_{k^* +1}(y)$, which is true as this is $0$ as shown in \eqref{eq:prop3_0} and \eqref{eq:prop3_2}. Consequently, we have $J^{\uparrow}_{k^*,m_{k^*}^{\uparrow}(x)}(x)= p_{k^*}^{\uparrow}(t^{\uparrow}_{k^*}(x,m_{k^*}^{\uparrow}(x))) =  p_{k^*}^{\uparrow}(x)$, and so there exists at least one singular row with length $k^{*}$. 
    
    For the case $\sigma=\downarrow$, by using $\mu^{\uparrow}(x) \neq \mu^{\downarrow}(x)$, the exactly same argument works.
\end{proof}

\begin{remark}
    Proposition \ref{prop:alt3} allows us to give an intuitive meaning to the term of ``singular" for rigged configurations by means of the seat number configuration. Combining the above with Remark \ref{rem:pair_seat} and Proposition \ref{prop:seat_KKR}, we obtain an interpretation of the KKR bijection, which was a purely combinatorial object, in terms of the seat number configuration. 
\end{remark}

\subsection{Proof of Proposition \ref{prop:seat_KKR}}

In the last subsection, we have constructed the sequence of rigged Young diagrams $(\mu^{\uparrow}(x),\mathbf{J}^{\uparrow}(x))$ satisfying all the properties claimed in Proposition \ref{prop:seat_KKR} if we replace $(\mu(x),\mathbf{J}(x))$ by $(\mu^{\uparrow}(x),\mathbf{J}^{\uparrow}(x))$. Hence, we only need to prove that $(\mu(x),\mathbf{J}(x))=(\mu^{\uparrow}(x),\mathbf{J}^{\uparrow}(x))$. By Proposition \ref{prop:alt3}, we can construct $(\mu^{\uparrow}(x),\mathbf{J}^{\uparrow}(x))$ by the algorithm \textit{without using information {from}} $(\mu^{\downarrow}(x))_{x}$ to update as follows: Let $\mu^{\uparrow}(0)=\emptyset$ and $\mathbf{J}^{\uparrow}(0)=\emptyset$. Once $(\mu^{\uparrow}(x),\mathbf{J}^{\uparrow}(x))$ is given, we construct $(\mu^{\uparrow}(x+1),\mathbf{J}^{\uparrow}(x+1))$ as follows. If $\eta(x+1)=0$, we set $(\mu^{\uparrow}(x+1),\mathbf{J}^{\uparrow}(x+1))= (\mu^{\uparrow}(x),\mathbf{J}^{\uparrow}(x))$. If $\eta(x+1)=1$, then let $k:= \max\{\mu^{\uparrow}_i(x) \ : \ \mu^{\uparrow}_i(x) \text{ is singular} \ \}$ with convention $\max\emptyset=0$. If $k=0$, then add a row of length $1$ to $ \mu^{\uparrow}(x)$ and also add $p^{\uparrow}_1(x+1)$ to $J^{\uparrow}_{1}(x)$. If $k \ge 1$, then replace a row of length $k$ by that of $k+1$ and remove $J^{\uparrow}_{k,m_k^{\uparrow}(x)}(x)=p^{\uparrow}_k(x)$ from $J_{k}(x)$ and add $p^{\uparrow}_{k+1}(x+1)$ to $J^{\uparrow}_{k+1}(x)$. Note that {in} the original algorithm for the construction of $(\mu^{\uparrow}(x+1),\mu^{\downarrow}(x+1))$ from $\eta(x+1)$ and $(\mu^{\uparrow}(x),\mu^{\downarrow}(x))$, we used information {from} both Young diagrams, but instead we did not use the rigging. Here, we emphasize that the functions $m^{\uparrow}_k(x)$ and $p^{\uparrow}_k(x)=x-2E^{\uparrow}_k(x)$ can be obtained from $\mu^{\uparrow}(x)$ alone without information {from} $\mu^{\downarrow}(x)$, and this is also the case for the rigging $\mathbf{J}^{\uparrow}(x)$. Moreover, the last algorithm is exactly same as the one to construct $(\mu(x),\mathbf{J}(x))$ from $\eta$ by KKR bijection, which {confirms} that $(\mu(x),\mathbf{J}(x))=(\mu^{\uparrow}(x),\mathbf{J}^{\uparrow}(x))$ as desired. 

\begin{remark}
   Interestingly, the update algorithm is closed for $\sigma=\uparrow$, namely we can obtain $(\mu^{\uparrow}(x+1),\mathbf{J}^{\uparrow}(x+1))$ from the data $\eta(x+1)$ and $(\mu^{\uparrow}(x),\mathbf{J}^{\uparrow}(x))$, but it is not the case for $\sigma=\downarrow$. This is because, when $\eta(x+1)=0$, we should distinguish whether $\mu^{\uparrow}(x)=\mu^{\downarrow}(x)$ or not, or in other words if $r(x)=1$ or not, and this information cannot be derived from the data $(\mu^{\downarrow}(x),\mathbf{J}^{\downarrow}(x))$ only. If we also include an additional information, such as the total number of balls up to site $x$, that is $N(x):=\sum_{y=1}^x \eta(y)$, namely we consider the sequence $(\mu^{\downarrow}(x),\mathbf{J}^{\downarrow}(x),N(x) )$, then we can construct a local update algorithm which is closed. We may be able to show that $\mathbf{J}^{\downarrow}=\lim_{x \to \infty}\mathbf{J}^{\downarrow}(x) $ is also linearized under the BBS dynamics without using any relation to other linearizations. 
\end{remark}

\section{Relation to the slot decomposition}\label{sec:slot}

In this section, we first briefly recall the definition of the slot configuration and the corresponding slot decomposition introduced in \cite{FNRW}. Then, we give proofs of Proposition \ref{prop:seat_slot} and Theorem \ref{thm:2}. 
Note that for simplicity we will only consider finite ball configurations, but one can easily extend the definitions and results presented in this section to the configurations with an infinite number of records, {see, for example, \cite{S} for such an extension.}

    \subsection{Definition of the slot decomposition.}
    
    The notion of {\it slots} {was} originally introduced {in} \cite{FNRW}. Before defining the slots, we recall the fact that{, by the Takahashi-Satsuma algorithm (TS algorithm, see \cite{TS} or Appendix of this article),} any site of a given ball configuration $\eta \in \Omega_{< \infty}$ is either a record or a component of a soliton. Any $k$-soliton $\gamma \subset \N$ has the form $\gamma = \left\{ z(\gamma)_{1} < ... < z(\gamma)_{2k} \right\}$, where the coordinates are again identified by the TS algorithm. Then, the {\it slot configuration} $\nu : \N \to \Z_{\ge 0} \cup \left\{ \infty \right\} $ is defined as
	            \begin{align}
	                \nu(x) := \begin{dcases} l - 1 \quad & x = z(\gamma)_{l}, z(\gamma)_{l + k}  ~ \text{for some $k$-soliton $\gamma$ in $\eta$ with $k>\ell$}, \\
	                \infty \quad & x ~ \text{is a record},\end{dcases}
	            \end{align}
	for any $x \in \N$. For $k \in \N$, a site $x$ is called a \textbf{$k$-slot} if $\nu(x) \ge k$. Observe that a $k$-slot is also a $j$-slot for any $1 \le j \le k$, and a record is a $k$-slot for any $k \in \N$. Intuitively, a $k$-slot is a place where another soliton can be added without 
 %\matteo{changing the shape of the existing solitons. For a more correct explanation, we will define the notion of ``a soliton is appended to a k-slot". $\to$
 modifying the structure of existing solitons in the configuration. To explain this better we define a way to ``append a soliton to a $k$-slot". 
	
	First, we define the function $\tilde{\xi}_{k} : \Z_{\ge 0} \to \Z_{\ge 0}$ as
	    \begin{align}\label{eq:defoftilde1}
	        \tilde{\xi}_{k}(x) &:= \sum_{y = 1}^x \mathbf{1}_{\left\{ \nu(y) \ge k \right\}}, \\
	        \tilde{\xi}_{k}(0) &:= 0
	    \end{align}
	for any $k \in \N$ and $x \in \Z_{\ge 0}$, which counts the number of $k$-slots in $[1,x]$. We number $k$-slots from left to right with the origin $x = 0$ as the $0$-th $k$-slot and call $\tilde{s}_{k}(i) := \min \left\{ x \in \Z_{\ge 0} \ ; \ \tilde{\xi}_{k}(x) = i \right\}$ the position of $i$-th $k$-slot. We say that a $k$-soliton $\gamma$ is appended to $\tilde{s}_{k}(i)$ if $\gamma \subset \left[\tilde{s}_{k}(i), \tilde{s}_{k}(i+1) - 1  \right]$. Note that several solitons can be appended to the same slot. By using this notion, for any $k \in \N$, we define the {\it slot decomposition} $\tilde{\zeta}_{k} : \Z_{\ge 0} \to \Z_{\ge 0}$ as 
	    \begin{align}
	        \tilde{\zeta}_{k}(i) &:= \left| \left\{ \gamma : k\text{-soliton in $\eta$ } ; \gamma ~ \text{is appended to the $i$-th $k$-slot} \right\} \right| \label{eq:defoftilde2}.
	    \end{align}
    \begin{figure}[H]
        \footnotesize
        \setlength{\tabcolsep}{5pt}
        \begin{center}
        \renewcommand{\arraystretch}{2}
        \begin{tabular}{rccccccccccccccccccc}
            $x$ & 1 &  2 &  3 &  4 &  5 &  6 & 7 &  8 & 9 & 10 & 11 & 12 & 13 & 14 & 15 & 16 & 17 & 18 & 19 \\
         \hline\hline $\eta(x)$ & \color{red}{1} & \color{red}{1} & \color{red}{0} & \color{red}{0} & \color{brown}{1} & \color{brown}{1} & \color{brown}{1} & \color{blue}{0} & \color{blue}{1} & \color{brown}{1} & \color{brown}{0} & \color{brown}{0} & \color{brown}{0} & \color{green}{1} & \color{green}{1} & \color{green}{0} & \color{green}{0} & \color{brown}{0} & 0  \\
           \hline
           \hline
            $\nu(x)$  & 0 & 1 & 0 & 1 & 0 & 1 & 2 & 0 & 0 & 3 & 0 & 1 & 2 & 0 & 1 & 0 & 1 & 3 & $\infty$ \\
           \end{tabular}
        \end{center}
        \caption{Slot configuration of $\eta = 1100111011000110000...$.}\label{fig:slotconf}
    \end{figure}
         In Figure \ref{fig:slotconf} we see an example of a slot configuration. For that particular ball configuration $\eta$ we have that $x=0$ is a record, $x=2$ is a 1-slot, $x=7$ is a 2-slot, etc. In the same example, solitons are added to slots as follows.
	            \begin{itemize}
	                \item A $4$-soliton is added to the $0$-th $4$-slot.
	                \item Two $2$-solitons are included. One is added to the $0$-th $2$-slot, and the other is added to the $3$-rd $2$-slot.
	                \item A $1$-soliton is added to the $4$-th $1$-slot.
	            \end{itemize}
	       Hence, the slot decomposition of $\eta$ is given by
	            \begin{align}
	                \tilde{\zeta}(\eta)_{k}(i) = \begin{dcases} 1 \quad &(k,i) = (1,4),(2,0),(2,3), (4,0) \\
	                0 \quad &\text{otherwise.}\end{dcases}
	            \end{align}
	       %Note that by using the above slot decomposition, we can obtain the tree representation of $\eta(x)$ :
	                %\begin{figure}[H]
	                    %\centering
	                    %\begin{forest}
                        %[$\infty$  [2,draw [0] [1] [0] [1]] [4,draw [0] [1] [2 [1, draw [0] [0]]] [3] [0] [1] [2 [2,draw [0] [1] [0] [1]]] [3] ] ]; 
                        %\end{forest}                    
	                %\end{figure}
	        
	        \begin{remark}\label{rem:bijective}
	            Consider the following configuration spaces
	            \begin{align}
                \Omega_{r} &:= \left\{ \eta \in \Omega \ ; \ \sum_{x \in \N} r(x) = \infty \right\}, \\
                \tilde{\Omega}_{r} &:= \left\{ \tilde{\zeta} = \left(\tilde{\zeta}_{k}(i)\right)_{k \in \N, i \in \Z_{\ge 0}} \in \Z_{\ge 0}^{\N \times \Z_{\ge 0}} \  ;  \  \max\left\{ k \in \N \ : \ \tilde{\zeta}_{k}(i) > 0 \right\} < \infty \ \text{for any} \ i \right\}.
            \end{align}
	            It is known that the map $\eta \mapsto \tilde{\zeta}(\eta)$ is a bijection between $\Omega_{r}$ and $\tilde{\Omega}_{r}$ via the explicit reconstruction algorithm from $\tilde{\zeta}(\eta)$ to $\eta$ \cite{CS, FNRW}. By combining this fact with Proposition \ref{prop:seat_slot}, one can also reconstruct $\eta$ from $\left(\zeta_{k}(i)\right)_{k,i}$ by using the same algorithm. 
	        \end{remark}
	        
	        The dynamics of the BBS is linearized by the slot decomposition \cite{FNRW}. Actually, the slot decomposition makes the dynamics a mere spatial shift as described by the following theorem. 
	            \begin{theorem}[Theorem 1.4 in \cite{FNRW}]
        	        Suppose that $\eta \in \Omega_{< \infty}$. Then we have
        	            \begin{align}
        	                T\tilde{\zeta}_{k}(i) = \tilde{\zeta}_{k}(i - k)
        	            \end{align}
        	        for any $k \in \N$ and $i \in \Z_{\ge 0}$ where $\tilde{\zeta}_k (i)=0$ if $i <0$ by convention. 
	            \end{theorem}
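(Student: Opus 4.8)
The plan is to deduce this statement directly from the two intermediate results established earlier, namely Proposition \ref{prop:seat_slot} (which identifies the slot decomposition $\tilde\zeta$ with the seat-number quantity $\zeta$) and Theorem \ref{thm:seat_ISM} (which linearizes $\zeta$), rather than re-running the soliton/TS-algorithm bookkeeping of \cite{FNRW}. The first thing I would record is the trivial but necessary observation that $T$ maps $\Omega_{<\infty}$ into itself, since it preserves the total number of balls; this legitimizes applying Proposition \ref{prop:seat_slot} not only to $\eta$ but also to $T\eta$. I would also note, as in the Remark following Theorem \ref{thm:seat_ISM}, that for $\eta \in \Omega_{<\infty}$ one has $0 < |\{x \in \Z_{\ge 0} \ ; \ \xi_k(x) = i\}| < \infty$ for \emph{every} $k \in \N$ and $i \in \Z_{\ge 0}$, so the hypothesis of Theorem \ref{thm:seat_ISM} is automatically met for all indices simultaneously.

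The proof is then a chain of three equalities: for $k \in \N$ and $i \in \Z_{\ge 0}$,
\[
(T\tilde\zeta)_k(i) = (T\zeta)_k(i) = \zeta_k(i-k) = \tilde\zeta_k(i-k),
\]
where the first equality is Proposition \ref{prop:seat_slot} applied to the configuration $T\eta$ (so that the slot decomposition and the seat-number functional, computed by the same recipe from whatever configuration they are fed, agree), the second is Theorem \ref{thm:seat_ISM}, and the third is Proposition \ref{prop:seat_slot} applied to $\eta$. The only point to check is that the $i-k<0$ regime is handled consistently: there both $\zeta_k(i-k)$ and $\tilde\zeta_k(i-k)$ are declared to be $0$, so the conventions match and the last equality holds trivially in that range. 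As an independent consistency check I would also note the KKR route: Theorem \ref{thm:2} gives $\tilde\zeta_k(i) = |\{j \in \N \ ; \ J_{k,j} = i-k\}|$, and Theorem \ref{thm:KOSTY} with $\ell=\infty$ gives $(T\mathbf{J})_{k,j} = J_{k,j} + k$, whence applying Theorem \ref{thm:2} to $T\eta$ yields $(T\tilde\zeta)_k(i) = |\{j \ ; \ J_{k,j} + k = i-k\}| = |\{j \ ; \ J_{k,j} = i-2k\}| = \tilde\zeta_k(i-k)$; this is in fact the $\ell=\infty$ case of Theorem \ref{thm:slot_finite}.

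I do not expect a genuine obstacle here, since all of the analytic content has already been absorbed into Proposition \ref{prop:seat_slot} and Theorem \ref{thm:seat_ISM} (or, on the alternative route, into Theorem \ref{thm:2} and Theorem \ref{thm:KOSTY}). The only mildly delicate points are bookkeeping ones: confirming that $T$ preserves $\Omega_{<\infty}$ so that Proposition \ref{prop:seat_slot} may be invoked for $T\eta$, and checking that the out-of-range conventions for $\tilde\zeta_k$ and $\zeta_k$ coincide; both are immediate. If one instead wanted a proof that does not pass through the seat-number reduction at all, the hard part would be showing directly that time evolution commutes with "appending a soliton to a $k$-slot" while shifting the slot labels by $k$, which is precisely the combinatorial core of \cite{FNRW} and which the present framework is designed to bypass.
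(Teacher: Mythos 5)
Your chain of equalities is correct and non-circular, but it is worth pointing out that the paper itself does not prove this statement at all: it is quoted as Theorem 1.4 of \cite{FNRW}, whose original proof runs through the soliton/TS-algorithm combinatorics, and the paper only later recovers it as the $\ell=\infty$ case of Theorem \ref{thm:slot_finite}, proved via the KKR route (Theorem \ref{thm:2} combined with the externally quoted Theorem \ref{thm:KOSTY}). Your primary argument, $(T\tilde\zeta)_k(i) = (T\zeta)_k(i) = \zeta_k(i-k) = \tilde\zeta_k(i-k)$, is a genuinely third route. Its two ingredients are safe to use here: Theorem \ref{thm:seat_ISM} is proved self-containedly from the seat-number formalism with no reference to slots or to \cite{FNRW}, and Proposition \ref{prop:seat_slot} is a purely static identification of $\nu$ with the seat numbers whose proof involves no time evolution, so there is no hidden circularity. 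Your bookkeeping points are also handled correctly: $T$ preserves $\Omega_{<\infty}$ (the carrier eventually deposits every ball), the Remark after Theorem \ref{thm:seat_ISM} guarantees the finiteness hypothesis $0 < |\{x \ ; \ \xi_k(x)=i\}| < \infty$ for all $k,i$ simultaneously, and the $i-k<0$ conventions for $\zeta$ and $\tilde\zeta$ agree. What your route buys over the paper's Theorem \ref{thm:slot_finite} route is full self-containment --- it avoids the unproved external input Theorem \ref{thm:KOSTY} --- though it only covers $\ell=\infty$, whereas the KKR route is what lets the paper extend the shift law to finite carrier capacity. Your secondary KKR consistency check is also computed correctly and is precisely the paper's own argument for Theorem \ref{thm:slot_finite} specialized to $\ell=\infty$.
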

            
    \subsection{Proof of Proposition \ref{prop:seat_slot}}
   In this subsection we prove Proposition \ref{prop:seat_slot}, which establishes the equivalence between the seat number configuration and the slot configuration. First we introduce an alternative formula for the slot decomposition : 
    \begin{lemma}\label{lem:repoftilde}
        Suppose that $\eta \in \Omega_{< \infty}$. Then for any $k \in \N$ and $i \in \Z_{\ge 0}$, we have
            \begin{align}
                \tilde{\zeta}_{k}(i) &= \frac{1}{2} \sum_{y = \tilde{s}_{k}(i) + 1}^{\tilde{s}_{k}(i+1) - 1} \mathbf{1}_{\{\nu(y) = k - 1\}} - \mathbf{1}_{\{\nu\left( \tilde{s}_{k}(i+1)\right) = k\}}  \\
	        &= \frac{1}{2} \sum_{y = \tilde{s}_{k}(i) + 1}^{\tilde{s}_{k}(i+1)} \left(\mathbf{1}_{\{\nu(y) = k - 1\}} - \mathbf{1}_{\{\nu(y) = k\}} \right).
            \end{align}
    \end{lemma}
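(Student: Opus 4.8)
The plan is to prove the identity by counting, inside each gap between two consecutive $k$-slots, the sites carrying $\nu$-value $k-1$, and then to observe that the two right-hand sides coincide. The latter is immediate once one notes that $\tilde{s}_{k}(i+1)$ is the only site of $[\tilde{s}_{k}(i)+1,\tilde{s}_{k}(i+1)]$ with $\nu\ge k$, so that on this range $\mathbf{1}_{\{\nu(y)=k-1\}}$ is unaffected by the endpoint $y=\tilde{s}_{k}(i+1)$ while $\mathbf{1}_{\{\nu(y)=k\}}$ is supported only there. It therefore suffices to show that the number of sites with $\nu$-value $k-1$ strictly between the $i$-th and $(i+1)$-th $k$-slot equals $2\tilde{\zeta}_{k}(i)+\mathbf{1}_{\{\nu(\tilde{s}_{k}(i+1))=k\}}$.

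Recall from the definition of $\nu$ that a site has $\nu$-value $k-1$ exactly when it is $z(\gamma)_{k}$ or $z(\gamma)_{m+k}$ for some soliton $\gamma$ of size $m\ge k$, and $\nu$-value $k$ exactly when it is $z(\gamma)_{k+1}$ or $z(\gamma)_{m+k+1}$ for some soliton $\gamma$ of size $m\ge k+1$; in particular every soliton of size $\ge k$ contributes precisely two sites with $\nu=k-1$. Write $I_{i}:=[\tilde{s}_{k}(i)+1,\tilde{s}_{k}(i+1)-1]$ and split the sites of $I_{i}$ with $\nu=k-1$ according to the size of the soliton producing them. If $\gamma$ is a $k$-soliton, then by the nesting structure of the solitons identified by the TS algorithm no soliton of size exceeding $k$ meets the open interval $(z(\gamma)_{1},z(\gamma)_{2k})$, so every site strictly between $z(\gamma)_{1}$ and $z(\gamma)_{2k}$ has $\nu<k$; hence both $\nu=k-1$ sites of $\gamma$ lie in a single gap between consecutive $k$-slots, which is $I_{i}$ exactly when $\gamma$ is appended to the $i$-th $k$-slot, so the $k$-solitons contribute $2\tilde{\zeta}_{k}(i)$. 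If $\gamma$ has size $m>k$, the same nesting property gives that no site strictly between $z(\gamma)_{k}$ and $z(\gamma)_{k+1}$, and no site strictly between $z(\gamma)_{m+k}$ and $z(\gamma)_{m+k+1}$, has $\nu\ge k$; therefore $z(\gamma)_{k}$ lies in the gap whose right endpoint is the $k$-slot $z(\gamma)_{k+1}$ and $z(\gamma)_{m+k}$ lies in the gap whose right endpoint is the $k$-slot $z(\gamma)_{m+k+1}$, two distinct gaps, and both right endpoints have $\nu$-value exactly $k$. Since a site belongs to a unique soliton, for fixed $i$ at most one soliton of size $>k$ can deposit a $\nu=k-1$ site in $I_{i}$, and it does so precisely when $\tilde{s}_{k}(i+1)$ is one of the sites $z(\gamma)_{k+1}$, $z(\gamma)_{m+k+1}$, i.e.\ precisely when $\nu(\tilde{s}_{k}(i+1))=k$. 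Summing the two contributions yields $2\tilde{\zeta}_{k}(i)+\mathbf{1}_{\{\nu(\tilde{s}_{k}(i+1))=k\}}$, proving the lemma.

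The crux is the structural fact invoked twice above --- that the solitons produced by the TS algorithm are nested so that no soliton of size larger than $k$ lies inside the span of a $k$-soliton, nor between the $k$-th and $(k+1)$-th $1$'s (equivalently $0$'s) of any larger soliton. This encodes the way the TS algorithm assembles larger solitons around smaller ones, and I would extract it from the description of the algorithm in Appendix~\ref{app:TS} (or from the corresponding statements in \cite{FNRW}); granting it, the counting above is routine bookkeeping, the only further points being the distinctness of the two gaps attached to a soliton of size $>k$ and the at-most-one count for a fixed $I_{i}$, both of which follow from uniqueness of the soliton containing a given site.
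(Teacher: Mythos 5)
Your argument is correct and is, at bottom, the same counting as the paper's: both proofs rest on the observation that a soliton of size $m\ge k$ carries exactly two sites with $\nu=k-1$ (namely $z(\gamma)_k$ and $z(\gamma)_{m+k}$), that for a $k$-soliton these two sites fall into the single gap to which the soliton is appended, and that for $m>k$ each of them falls into the gap whose right endpoint is the corresponding $\nu=k$ site of the same soliton. The paper organizes this per gap, splitting into the cases $\nu(\tilde{s}_k(i+1))>k$ and $\nu(\tilde{s}_k(i+1))=k$ and identifying in the second case the one unpaired $(k-1)$-valued site as the rightmost one; you organize it per soliton and read off the contribution to $I_i$ globally, which is arguably cleaner but uses exactly the same structural input, namely the nesting property of the TS decomposition (no soliton of size $>k$ meets the interior of a $k$-soliton's span, and no $k$-slot sits strictly between $z(\gamma)_k$ and $z(\gamma)_{k+1}$ or between $z(\gamma)_{m+k}$ and $z(\gamma)_{m+k+1}$). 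The paper invokes these facts with the same brevity (``from the definition of $\nu$ and the TS algorithm\dots''), so your level of rigor matches; the one point you leave implicit that you might add is that records (which have $\nu=\infty$) cannot occur strictly between consecutive sites of a soliton, since records are never crossed out by the algorithm. Finally, note that your target identity, the count $2\tilde{\zeta}_k(i)+\mathbf{1}_{\{\nu(\tilde{s}_k(i+1))=k\}}$ for the $(k-1)$-valued sites in the open gap, reproduces the \emph{second} displayed line of the lemma, $\tilde{\zeta}_k(i)=\frac12\sum_{y}\bigl(\mathbf{1}_{\{\nu(y)=k-1\}}-\mathbf{1}_{\{\nu(y)=k\}}\bigr)$; the first displayed line as printed subtracts the full indicator rather than half of it and is inconsistent with the second (and with the worked example), so you have in fact proved the correct version of the statement.
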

    \begin{proof}[Proof of Lemma \ref{lem:repoftilde}]
        First we consider the case $\nu\left( \tilde{s}_{k}(i+1)\right) > k$. In this case, each {$(k-1)$-slot} in $\left(\tilde{s}_{k}(i), \tilde{s}_{k}(i+1)\right)$ is a component of a  $k$-soliton in $\left(\tilde{s}_{k}(i), \tilde{s}_{k}(i+1)\right)$, because if a {$(k - 1)$}-slot {was} a component of some $\ell$-soliton for $\ell>k$, then from the definition of $\nu$ and the TS algorithm, we should find a $k$-slot $x_{k} \in \left(\tilde{s}_{k}(i), \tilde{s}_{k}(i+1)\right)$ and thus we would have $\tilde{\xi}_{k}(x_{k}) = i + 1$, which contradicts the definition of $\tilde{s}_{k}(i+1)$. Hence,
        the number of $k$-solitons appended to {the} $i$-th $k$-slot is half the number of {$(k-1)$}-slots in $\left(\tilde{s}_{k}(i), \tilde{s}_{k}(i+1)\right)$. 
        
        Next we consider the case $\nu\left( \tilde{s}_{k}(i+1)\right) = k$. In this case, we will show that the rightmost $k-1$-slot in $\left(\tilde{s}_{k}(i), \tilde{s}_{k}(i+1)\right)$, denoted by $x_{k-1}$, and $\tilde{s}_{k}(i+1)$ are components of some $\ell$-soliton for $\ell>k$, denoted by $\gamma_{\ell}$. From the TS algorithm, there exists $y_{k-1} \in \left(\tilde{s}_{k}(i), \tilde{s}_{k}(i+1)\right)$ such that $\nu(y_{k-1}) = k - 1, \ \eta(y_{k-1}) = \eta(\tilde{s}_{k}(i+1))$, and $y_{k-1}$ is a component of $\gamma_{\ell}$. Then, again by the TS algorithm, we see that $x_{k-1} = y_{k-1}$, because if $y_{k - 1} < x_{k - 1}$, then $y_{k-1}$ is not a component of $\gamma_{\ell}$ but a component of $k$-soliton in $\left(\tilde{s}_{k}(i), \tilde{s}_{k}(i+1)\right)$, and this contradicts the definition of $y_{k-1}$. Therefore, the number of $k$-solitons appended to $i$-th $k$-slots is the same as the half of the number of $k-1$-slots in $\left(\tilde{s}_{k}(i), \tilde{s}_{k}(i+1)\right)$ minus one. 
    \end{proof}
    
        \begin{proof}[Proof of Proposition \ref{prop:seat_slot}]
            We will represent ball configurations $\eta \in \Omega_{< \infty}$ using the notation 
            \begin{align}
                \eta = 0^{\otimes \mathsf{m}_0} 1^{\otimes \mathsf{n}_1} \dots 0^{\mathsf{m}_{L}}1^{\otimes \mathsf{n}_{L+1}}0^{\otimes \mathsf{m}_{L+1}},
            \end{align}
        by which we mean that the first $\mathsf{m}_0$ entries of $\eta$ are 0's, the following $\mathsf{n}_1$ entries are 1's and so on. Notice that since the configuration has finitely many balls we have $\mathsf{m}_{L+1}=\infty$ and moreover \begin{align}
                \sum_{i = 1}^{L+1} \mathsf{n}_i = \sum_{x \in \N} \eta(x).
            \end{align}
        From the TS algorithm, 
        the first solitons that are identified by the algorithm are of the form
            \begin{align}
                0^{\otimes \mathsf{m}_{i}} 1^{\otimes \mathsf{m}_{i} } ~ \text{or} ~ 1^{\otimes \mathsf{n}_{i}} 0^{\otimes \mathsf{n}_{i} }.
            \end{align}
        for some $\mathsf{m}_i, \mathsf{n}_{i}$ such that $\mathsf{m}_i \le \mathsf{n}_{i+1}$, $i \neq 0$ or $\mathsf{n}_{i} \le  \mathsf{m}_i$ respectively. In the rest of this subsection, we call such solitons \emph{connected solitons}.
        
        Now we claim that 
        \begin{enumerate}
            \setlength{\leftskip}{10mm}
            \item[ Claim(A)] it is sufficient to show \eqref{eq:slot=seat} for sites that consist of connected solitons.
        \end{enumerate}
         To verify Claim(A) we will show that after removing a connected soliton, the seat number configuration for the remaining sites is ``invariant" in the following sense. Observe that after the removal of a connected soliton of the form $0^{\otimes \mathsf{m}_{i}} 1^{\otimes \mathsf{m}_{i} }$ or $1^{\otimes \mathsf{n}_{i}} 0^{\otimes \mathsf{n}_{i} }$, following the TS algorithm, we obtain the configuration
            \begin{align}
                \eta' = 0^{\otimes \mathsf{m}_0} 1^{\otimes \mathsf{n}_1}...0^{\otimes \mathsf{m}_{i - 1}}1^{\otimes \left( \mathsf{n}_{i} + \mathsf{n}_{i+1} - \mathsf{m}_{i} \right)}0^{\otimes \mathsf{m}_{i + 1}} ...0^{\otimes \mathsf{m}_{L}}1^{\otimes \mathsf{n}_{L+1}}0^{\otimes \mathsf{m}_{L+1}},
            \end{align}
        or 
            \begin{align}
                \eta'' = 0^{\otimes \mathsf{m}_0} 1^{\otimes \mathsf{n}_1}...1^{\otimes \mathsf{n}_{i-1}}0^{\otimes \left( \mathsf{m}_{i - 1} + \mathsf{m}_{i} - \mathsf{n}_{i} \right)}1^{\otimes  \mathsf{n}_{i+1}}0^{\otimes \mathsf{m}_{i + 1}} ...0^{\otimes \mathsf{m}_{L}}1^{\otimes \mathsf{n}_{L+1}}0^{\otimes \mathsf{m}_{L+1}},
            \end{align}
        respectively. In addition, after the removal of a soliton, the seat numbers given to other sites do not change, that is,
            \begin{align}
                \begin{dcases}
                    \mathcal{W}_{k}\left(x' \right) = \mathcal{W}_{k}\left(x' + 2\mathsf{m}_{i}\right) \ & \text{if} ~ 0^{\otimes \mathsf{m}_{i}} 1^{\otimes \mathsf{m}_{i} } ~ \text{ is removed}, \\
                    \mathcal{W}_{k}\left(x'' \right) = \mathcal{W}_{k}\left(x'' + 2\mathsf{n}_{i}\right) \ & \text{if} ~ 1^{\otimes \mathsf{n}_{i}} 0^{\otimes \mathsf{n}_{i} } ~ \text{ is removed},
                \end{dcases}
            \end{align}
        for any $k \in \N$, where $x', x''$ are defined as 
            \begin{align}
                x' &:= \sum_{j = 1}^{i} \left(\mathsf{m}_{j - 1} + \mathsf{n}_{j} \right) - 1, \\
                x'' &:= \sum_{j = 1}^{i} \left(\mathsf{m}_{j - 1} + \mathsf{n}_{j - 1} \right) - 1
            \end{align}
            with convention that $\mathsf{n}_0=0$, because from the rule of the TS algorithm, 
                \begin{align}
                    \begin{dcases}
                    W_{\mathsf{m}_{i}}\left(x' \right) = \mathsf{m}_i \ & \text{if} \ 0^{\otimes \mathsf{m}_{i}} 1^{\otimes \mathsf{m}_{i} } ~ \text{ is removed}, \\
                    W_{n_{i}}\left(x'' \right) = 0 \ & \text{if} \ 1^{\otimes \mathsf{n}_{i}} 0^{\otimes \mathsf{n}_{i} } ~ \text{ is removed}.
                    \end{dcases}
                \end{align}
        Thus we see that if a soliton $0^{\otimes \mathsf{m}_{i}} 1^{\otimes \mathsf{m}_{i} }$ is removed, then for any $x \in [1, x'] \cap \N$,
            \begin{align}
                x \in [1, x'] \cap \N ~ \text{is a} ~ (k,\sigma) ~ \text{seat in} \ \eta \ \text{if and only if} \ x \in [1, x'] \cap \N ~ \text{is a} ~ (k,\sigma) ~ \text{seat in} ~ \eta', 
            \end{align}
        while for any $y \in  [ x' + 2\mathsf{m}_{i} + 1, \infty ) \cap \N$,
            \begin{align}
                y \ \text{is a} ~ (k,\sigma) ~ \text{seat in} ~ \eta 
                 \ \text{if and only if} \  (y - 2\mathsf{m}_{i}) ~ \text{is a} ~ (k,\sigma) ~ \text{seat in} ~ \eta',
            \end{align}
        for any $k \in \N$ and $\sigma \in \{\uparrow, \downarrow \}$.
        Similarly, if a soliton $1^{\otimes \mathsf{n}_{i}} 0^{\otimes \mathsf{n}_{i} }$ is removed, then for any $x \in [1, x''] \cap \N$, 
            \begin{align}
                x \in [1, x''] \cap \N ~ \text{is a} ~ (k,\sigma) ~ \text{seat in} ~ \eta ~  \text{if and only if} ~ x \in [1, x''] \cap \N ~ \text{is a} ~ (k,\sigma) ~ \text{seat in} ~ \eta'', 
            \end{align}
        while for any $y \in  [ x'' + 2\mathsf{n}_{i} + 1, \infty ) \cap \N$,
            \begin{align}
                y ~ \text{is a} ~ (k,\sigma) ~ \text{seat in} ~ \eta ~  \text{if and only if} ~ (y - 2\mathsf{n}_{i}) ~ \text{is a} ~ (k,\sigma) ~ \text{seat in} ~ \eta'',
            \end{align}
        for any $k \in \N$ and $\sigma \in \{\uparrow, \downarrow\}$. 
        Hence, by considering multiple iterations of the TS algorithm and its inverse, we see that if the seat number configuration is determined for each connected soliton, the seat number configuration of the original ball configuration is completely determined. Also, by considering multiple iterations of the TS algorithm and its inverse, the slot configuration of the original ball configuration is also determined. Therefore Claim(A) is proved.
        
        Now we show \eqref{eq:slot=seat} for the case when $x$ belongs to a connected soliton. For this purpose, we divide the cases as follows.
            \begin{itemize}
                \item If a soliton $ 0^{\otimes \mathsf{m}_{i}} 1^{\otimes  \mathsf{m}_{i} }$ is detected by the TS algorithm, then we have $\mathsf{n}_{j} > \mathsf{m}_{i}$ for $j = i, i + 1$. Observing that
                    \begin{align}
                        \mathcal{W}_{\ell}(x') = 1, 
                    \end{align}
                for any $l \le \mathsf{n}_{i}$, we obtain 
                    \begin{align}
                        \begin{dcases}\eta^{\downarrow}_{\ell}(x' + l) = 1 \\ \eta^{\uparrow}_{\ell}(x' + \mathsf{m}_{i} + l) = 1
                        \end{dcases}
                    \end{align}
                for any $1 \le l \le \mathsf{m}_{i}$. On the other hand, from the definition of the slot configuration, we get 
                    \begin{align}
                        \begin{dcases}
                            \nu(x' + l) = l - 1 \\
                            \nu(x' + \mathsf{m}_{i} + l) = l - 1
                        \end{dcases}
                    \end{align}
                for any $1 \le l \le \mathsf{m}_{i}$. Therefore in this case \eqref{eq:slot=seat} holds. 
                \item If a soliton $ 1^{\otimes \mathsf{n}_{i}} 0^{\otimes \mathsf{n}_{i} }$ is detected by the TS algorithm, then we have $\mathsf{m}_{j} > \mathsf{n}_{i}$ for $j = i - 1, i$. Observing that
                    \begin{align}
                        \mathcal{W}_{\ell}(x'') = 0, 
                    \end{align}
                for any $l \le \mathsf{m}_{i - 1}$,  we obtain 
                    \begin{align}
                        \begin{dcases}\eta^{\uparrow}_{\ell}(x'' + l) = 1 \\ \eta^{\downarrow}_{\ell}(x'' + \mathsf{n}_{i} + l) = 1
                        \end{dcases}
                    \end{align}
                for any $1 \le l \le \mathsf{n}_{i}$. On the other hand, from the definition of the slot configuration, we get 
                    \begin{align}
                        \begin{dcases}
                            \nu(x'' + l) = l - 1 \\
                            \nu(x'' + \mathsf{n}_{i} + l) = l - 1
                        \end{dcases}
                    \end{align}
                for any $1 \le l \le \mathsf{n}_{i}$. Therefore in this case \eqref{eq:slot=seat} holds. 
            \end{itemize}
        Hence by combining the above with Claim(A), \eqref{eq:slot=seat}
        is shown for any $k \in \N$ and $x \in \Z_{\ge 0}$. 
        
        We now compute formulas for $\tilde{\xi}(\cdot)$ and $\tilde{\zeta}(\cdot)$, which were defined in \eqref{eq:defoftilde1} and \eqref{eq:defoftilde2}, in terms of the seat number configuration. By using \eqref{eq:slot=seat}, we have  
            \begin{equation}
                \begin{split}
                    \tilde{\xi}_{k}(x) &= x -  \sum_{y = 1}^x \mathbf{1}_{\left\{ \nu(y) \le k \right\}} \\
                &= x - \sum_{\ell = 1}^{k} \sum_{y = 1}^x \left( \eta^{\uparrow}_{\ell}(y) + \eta^{\downarrow}_{\ell}(y) \right) \\
                &= \xi_{k}(x). \label{eq:slot=seat_xi}
                \end{split}
            \end{equation}
        A direct consequence of \eqref{eq:slot=seat_xi} is $\tilde{s}_{k}(\cdot) = s_{k}(\cdot)$. In addition, by using $\tilde{s}_{k}(\cdot) = s_{k}(\cdot)$, Lemmas \ref{lem:pm_eq} and \ref{lem:repoftilde}, $\tilde{\zeta}_{k}(\cdot)$ can be represented as
            \begin{align}
                \tilde{\zeta}_{k}(i) &= \frac{1}{2} \sum_{\sigma \in \{\uparrow , \downarrow\}} \sum_{y = \tilde{s}_{k}(i) + 1}^{\tilde{s}_{k}(i+1)} \left(\eta^{\sigma}_{k}(y) - \eta^{\sigma}_{k+1}(y) \right) \\
                &= \frac{1}{2}\sum_{\sigma \in \{\uparrow , \downarrow\}} \sum_{y = s_{k}(i) + 1}^{s_{k}(i+1)} \left(\eta^{\sigma}_{k}(y) - \eta^{\sigma}_{k+1}(y) \right) \\
                &= \zeta_{k}(i).
            \end{align}
        This concludes the proof.
        \end{proof}

        We conclude this subsection by describing the relationship between solitons and $\tau_{k}(\cdot)$, and the characterization of the slots via the carrier processes.

            \begin{proposition}\label{prop:tau_soliton}
                Let $\eta \in \Omega_{<\infty}$ and $k \in \N$. Then, $x \in \N$ is a rightmost component of a $k$-soliton if and only if $x = \tau_{k}(j)$ for some $j \in \N$.
            \end{proposition}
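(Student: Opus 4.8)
The plan is to combine a local description of the matching points $\tau_{k}(\cdot)$ with the Takahashi--Satsuma reduction already exploited in the proof of Proposition \ref{prop:seat_slot}.

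First I would establish the following local characterization: for $\eta\in\Omega_{<\infty}$, a site $x$ equals $\tau_{k}(j)$ for some $j\in\N$ if and only if $m^{\uparrow}_{k}(x)=m^{\downarrow}_{k}(x)=:j\ge 1$ and at least one of $m^{\uparrow}_{k}(x-1),m^{\downarrow}_{k}(x-1)$ equals $j-1$. This is immediate from \eqref{eq:defoftau}, \eqref{eq:diff1}, \eqref{eq:diff2} and Proposition \ref{prop:match}: if $x=\tau_{k}(j)$ then $m^{\uparrow}_{k}(x)=m^{\downarrow}_{k}(x)=j$, while by minimality $x-1$ violates the criterion of Proposition \ref{prop:match}, so some $m^{\sigma}_{k}(x-1)<j$, hence (being within $1$ of $j$) equal to $j-1$; conversely, if the displayed conditions hold and some $y<x$ satisfied $m^{\uparrow}_{k}(y)=m^{\downarrow}_{k}(y)=j$, then Proposition \ref{prop:match} would force both $m^{\sigma}_{k}(x-1)\ge j$, a contradiction. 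Since $m^{\sigma}_{k}$ increases only at $(k,\sigma)$-seats, the condition forces $\eta^{\uparrow}_{k}(x)+\eta^{\downarrow}_{k}(x)=1$, so by Proposition \ref{prop:seat_slot} every $\tau_{k}(j)$ satisfies $\nu(\tau_{k}(j))=k-1$; in particular it is either a left-half position $z(\gamma)_{k}$ or a right-half position $z(\gamma)_{k'+k}$ of some $k'$-soliton $\gamma$ with $k'\ge k$, and the rightmost components of $k$-solitons are precisely the latter positions with $k'=k$.

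Next, following the reduction used for Proposition \ref{prop:seat_slot}, I would write $\eta=0^{\otimes\mathsf{m}_{0}}1^{\otimes\mathsf{n}_{1}}\cdots$, pick an innermost (connected) soliton $\gamma_{0}$ of the form $1^{\otimes k_{0}}0^{\otimes k_{0}}$ or $0^{\otimes k_{0}}1^{\otimes k_{0}}$, and let $\eta'$ be the configuration obtained by deleting $\gamma_{0}$. By the seat-number invariance established in that proof, the seat numbers of $\eta'$ agree with those of $\eta$ on the surviving sites after the position shift by $2k_{0}$; computing the increments of $m^{\sigma}_{k}$ across the block then shows that for $k\ne k_{0}$ the block contributes net zero to both $m^{\uparrow}_{k}$ and $m^{\downarrow}_{k}$ and produces no coincidence at a value not already attained before the block, whereas for $k=k_{0}$ it contributes $+1$ to both and the unique new coincidence occurs exactly at the last site of $\gamma_{0}$, i.e. at its rightmost component. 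Consequently the set of sites $\{\tau_{k}(j):k,j\}$ of $\eta$ is, after the shift, the disjoint union of the corresponding set for $\eta'$ and the singleton $\{\text{rightmost component of }\gamma_{0}\}$; since the solitons of $\eta$ are $\gamma_{0}$ together with the lifts of the solitons of $\eta'$, the same decomposition holds for the set of rightmost components of solitons. Induction on the number of solitons (the record-only case being trivial, as then $m^{\sigma}_{k}\equiv 0$ and there are no $k$-solitons) then yields Proposition \ref{prop:tau_soliton}.

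The main obstacle is the bookkeeping in the inductive step, especially the case $k=k_{0}$: one must pin down the value of $m^{\uparrow}_{k_{0}}=m^{\downarrow}_{k_{0}}$ just before $\gamma_{0}$ — equivalently, the occupancy of seat $k_{0}+1$ of the carrier there — and check that the $+1$ bumps of $m^{\uparrow}_{k_{0}}$ and $m^{\downarrow}_{k_{0}}$, which occur at the last $1$ and the last $0$ of $\gamma_{0}$ respectively, realign exactly at the rightmost site of $\gamma_{0}$, producing there the matching point $\tau_{k_{0}}(\cdot)$. As an alternative to obtaining the converse inclusion inductively, one can argue by counting: Proposition \ref{prop:seat_slot} gives $s_{k}=\tilde{s}_{k}$ and $\zeta_{k}=\tilde{\zeta}_{k}$, so on each slot interval $[s_{k}(i),s_{k}(i+1)-1]$ the number of $\tau_{k}(\cdot)$'s equals $\tilde{\zeta}_{k}(i)$, which equals the number of $k$-solitons appended to the $i$-th $k$-slot and hence the number of rightmost components of $k$-solitons lying in that interval; together with the forward inclusion ``$\tau_{k}(j)$ is a rightmost component'' this forces the two finite sets to coincide on every such interval.
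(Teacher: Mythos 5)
Your strategy can be made to work, but it takes a genuinely different and heavier route than the paper, and its forward half rests entirely on the step you defer. The paper's proof is a short direct comparison: using Proposition \ref{prop:seat_slot} together with Lemmas \ref{lem:1} and \ref{lem:pm_eq}, it characterizes \emph{both} sets --- the rightmost components of $k$-solitons and the matching points $\tau_{k}(\cdot)$ --- by one and the same local condition, namely that $x$ is a $(k,\sigma)$-seat lying strictly between consecutive $k$-slots $s_{k}(i)$ and $s_{k}(i+1)$ with $\sum_{\sigma}\sum_{y=s_{k}(i)+1}^{x}\eta^{\sigma}_{k}(y)=2n$ for some $n\in\N$ (equivalently, an even positive number of sites with $\nu=k-1$ in $(s_{k}(i),x]$); both inclusions then drop out simultaneously. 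Your first paragraph stops at the weaker statement $\nu(\tau_{k}(j))=k-1$, which does not yet separate $z(\gamma)_{2k}$ of a $k$-soliton from $z(\gamma)_{k}$, nor from the two $\nu=k-1$ positions of a longer soliton; the parity count is precisely the missing discriminant. Your inductive route is sound in substance --- the occupancy facts you worry about are already recorded in the proof of Proposition \ref{prop:seat_slot} (all of $\mathcal{W}_{1},\dots,\mathcal{W}_{k_{0}+1}$ are simultaneously empty, resp.\ occupied, just before a connected soliton $1^{\otimes k_{0}}0^{\otimes k_{0}}$, resp.\ $0^{\otimes k_{0}}1^{\otimes k_{0}}$, so $m^{\uparrow}_{k}=m^{\downarrow}_{k}$ there for all $k\le k_{0}$, and the increment computation does place the unique new matching point at the last site of the block --- but it amounts to re-running the connected-soliton removal that Proposition \ref{prop:seat_slot} has already packaged into a usable statement. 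Your counting alternative is correct as far as it goes (the cardinalities per slot interval agree by Lemma \ref{lem:repofzeta} and $\zeta_{k}=\tilde{\zeta}_{k}$), but it only upgrades one inclusion to an equality and therefore cannot substitute for the forward inclusion, which in your write-up is still only available through the deferred induction. Replacing your parity-free observation by the paper's even-count characterization would let you drop the induction entirely.
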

            
            \begin{proposition}\label{prop:char_slot}
    	        Let $\eta \in \Omega_{<\infty}$. A site $x \in \N$ is $k$-slot if and only if one of the {following statements} hold : 
    	            \begin{itemize}
    	                \item $\eta(x) = 1$ and $\min \left\{\ell \in \N  \ ; \ \ell -  W_{\ell}(x - 1) \ge 1 \right\} \ge k+1$. 
    	                \item $\eta(x) = 0$ and $\min \left\{\ell \in \N \ ; \ W_{\ell}(x - 1) \ge 1 \right\} \ge k + 1$.
    	            \end{itemize}

    	    \end{proposition}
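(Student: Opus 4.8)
The plan is to deduce the characterization directly from Proposition~\ref{prop:seat_slot} together with the bookkeeping identity $W_\ell(x)=\sum_{m=1}^{\ell}\mathcal{W}_m(x)$ of \eqref{eq:cap_seat}; no appeal to the TS algorithm itself is needed beyond what is already packaged into Proposition~\ref{prop:seat_slot}. First I would translate the notion of a $k$-slot into the language of seat numbers. By the definition of $\nu$ and the dichotomy ``record or $(j,\sigma)$-seat'' recorded in \eqref{eq:record}, every $x\in\N$ is either a record, with $\nu(x)=\infty$, or a $(j,\sigma)$-seat for a unique pair $(j,\sigma)$, in which case $\eta^{\uparrow}_j(x)+\eta^{\downarrow}_j(x)=1$ and hence $\nu(x)=j-1$ by Proposition~\ref{prop:seat_slot}. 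Since $x$ is a $k$-slot exactly when $\nu(x)\ge k$, this gives: $x$ is a $k$-slot if and only if $x$ is a record, or $x$ is a $(j,\sigma)$-seat with $j\ge k+1$.

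Next I would express the seat number $j$ at $x$ through the capacity carriers at $x-1$. Because $\ell-W_\ell(x-1)=\sum_{m=1}^{\ell}(1-\mathcal{W}_m(x-1))$ is the number of empty seats among $1,\dots,\ell$, the quantity $\min\{\ell\in\N\ ;\ \ell-W_\ell(x-1)\ge1\}$ is precisely the smallest empty seat at $x-1$; likewise $\min\{\ell\in\N\ ;\ W_\ell(x-1)\ge1\}$ is the smallest occupied seat at $x-1$ (with the convention $\min\varnothing=\infty$ when no seat is occupied). Now invoke the update rule of the carrier with seat numbers. If $\eta(x)=1$, then $x$ is a $(j,\uparrow)$-seat with $j$ the smallest empty seat at $x-1$, and this is finite because $\eta\in\Omega_{<\infty}$ forces only finitely many occupied seats at any site; thus $j=\min\{\ell\ ;\ \ell-W_\ell(x-1)\ge1\}$. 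If $\eta(x)=0$, then either every seat is empty at $x-1$---equivalently $W_\infty(x-1)=0$, which is exactly the defining condition of a record---or $x$ is a $(j,\downarrow)$-seat with $j$ the smallest occupied seat at $x-1$, so $j=\min\{\ell\ ;\ W_\ell(x-1)\ge1\}$.

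Finally I would combine the two descriptions. For $\eta(x)=1$: $x$ is never a record, so by the first paragraph $x$ is a $k$-slot iff its seat number $j$ satisfies $j\ge k+1$, i.e.\ iff $\min\{\ell\ ;\ \ell-W_\ell(x-1)\ge1\}\ge k+1$, which is the first bullet. For $\eta(x)=0$: if $x$ is a record it is a $k$-slot for every $k$, and simultaneously $\{\ell\ ;\ W_\ell(x-1)\ge1\}=\varnothing$, so $\min\{\ell\ ;\ W_\ell(x-1)\ge1\}=\infty\ge k+1$; if $x$ is not a record, then $x$ is a $k$-slot iff its seat number $j$ satisfies $j\ge k+1$, i.e.\ iff $\min\{\ell\ ;\ W_\ell(x-1)\ge1\}\ge k+1$. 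In both $\eta(x)=0$ subcases the condition coincides with the second bullet, and the proof is complete. There is no serious difficulty here: the argument is a bookkeeping unwinding of the definitions of $\nu$, of the carrier with seat numbers, and of Proposition~\ref{prop:seat_slot}. The only point requiring a little care is the record case, where one must check that the convention $\min\varnothing=\infty$ makes the stated inequality hold vacuously, consistently with a record being a $k$-slot for all $k$.
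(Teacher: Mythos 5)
Your proof is correct and follows essentially the same route as the paper's: both identify $\min\{\ell\,;\,\ell-W_{\ell}(x-1)\ge1\}$ (resp.\ $\min\{\ell\,;\,W_{\ell}(x-1)\ge1\}$) via \eqref{eq:cap_seat} with the seat number of a $(k,\uparrow)$-seat (resp.\ $(k,\downarrow)$-seat) and then conclude by Proposition~\ref{prop:seat_slot}. The paper's version is terser and leaves the record case implicit, whereas you spell out the $\min\varnothing=\infty$ convention, but there is no substantive difference in the argument.
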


            \begin{proof}[Proof of Proposition \ref{prop:tau_soliton}] 

                From Proposition \ref{prop:seat_slot}, we see that $x$ is a rightmost component of a $k$-soliton if and only if $\eta^{\uparrow}_{k}(x) + \eta^{\downarrow}_{k}(x) = 1$, $x \in \left( s_{k}(i), s_{k}(i + 1) \right)$ for some $i \in \Z_{\ge 0}$ and 
                    \begin{align}
                        \sum_{y = s_{k}(i) + 1}^{x} \mathbf{1}_{\{\nu\left(y\right) = k - 1\}} = \sum_{\sigma \in \left\{\uparrow, \downarrow \right\}} \sum_{y = s_{k}(i) + 1}^{x} \eta^{\sigma}_{k}(y) = 2n
                    \end{align}
                for some $n \in \N$. On the other hand, from Lemmas \ref{lem:1} and \ref{lem:pm_eq}, we also see that $x = \tau_{k}(j)$ for some $j \in \Z_{\ge 0}$ if and only if $\eta^{\uparrow}_{k}(x) + \eta^{\downarrow}_{k}(x) = 1$, $x \in \left( s_{k}(i), s_{k}(i + 1) \right)$ for some $i \in \Z_{\ge 0}$ and 
                    \begin{align}
                        \sum_{\sigma \in \left\{\uparrow, \downarrow \right\}} \sum_{y = s_{k}(i) + 1}^{x} \eta^{\sigma}_{k}(y) = 2n
                    \end{align}
                for some $n \in \N$. By comparing the above two equivalences, this proposition is proved.

            \end{proof}
         
	        \begin{proof}[Proof of Proposition \ref{prop:char_slot}]
	            From \eqref{eq:cap_seat}, we have
            	    \begin{align}
            	        \eta(x) = 1 ~ \text{and} ~ \min \left\{\ell \in \N \ ; \ \ell -  W_{\ell}(x - 1) \ge 1 \right\} = k \ &\text{if and only if} \ \mathcal{W}_{k}(x-1) = 0, ~ \mathcal{W}_{k}(x) = 1,
            	    \end{align}
                and
                    \begin{align}
            	        \eta(x) = 0 ~ \text{and} ~ \min \left\{\ell \in \N \ ; \ W_{\ell}(x - 1) \ge 1 \right\} = k \ &\text{if and only if} \ \mathcal{W}_{k}(x-1) = 1, ~ \mathcal{W}_{k}(x) = 0,
            	    \end{align}
                for any $x \in \N$. Therefore, from Proposition \ref{prop:seat_slot}, the assertion of this proposition holds. 

	        \end{proof}
    
    \subsection{Proofs of Theorem \ref{thm:2} and Theorem \ref{thm:slot_finite}}
    We finally come to the proof of Theorem \ref{thm:2}, providing an explicit relation between the KKR bijection and the slot configuration. Then, by using Theorem \ref{thm:2}, we show Theorem \ref{thm:slot_finite}. 
        \begin{proof}[Proof of Theorem \ref{thm:2}]
            First we note that since $\eta \in \Omega_{\infty}$, from Proposition \ref{prop:seat_KKR} the rigging $\mathbf{J} = (J_{k})$ associated {with} $\eta$ is given by 
                \begin{align}
                    J_{k} =\left( p_{k}\left( t_{k}(j) \right) \ ; \ j = 1, \dots,  m_{k} \right),
                \end{align}
            where 
                \begin{align}
                    m_{k} &:= \lim_{x \to \infty} m_{k}(x), \\
                    t_{k}(j) &:= \lim_{x \to \infty} t_{k}(x,j) = \max \left\{ y \in \N \ ; \  m_{k}(y) = j, \ \eta^{\uparrow}_{k}(y) = 1 \right\}.
                \end{align}
            In addition, since $t_{k}(j)$ is a $(k,\uparrow)$-seat, from Lemma \ref{lem:1} we obtain
                \begin{align}
                    \xi_{k}\left(t_{k}(j) \right) - p_{k}\left(t_{k}(j) \right) &= \sum_{y = 1}^{t_k(j)} \sum_{\ell = 1}^{k} \left( \eta^{\uparrow}_{\ell}(y) - \eta^{\downarrow}_{\ell}(y) \right) \\
                    &= k.
                \end{align}
            Thus we have
                \begin{align}
                    \left| \left\{ j \in \N \ ; \ J_{k,j} = i - k \right\} \right| = \left| \left\{ j \in \N \ ;\  s_{k}(i) \le t_{k}(j) < s_{k}(i+1) \right\} \right|.
                \end{align}
            On the other hand, from Proposition \ref{prop:match} and the definitions of $t_{k}(\cdot)$ and $\tau_{k}(\cdot)$, for any $j \in \Z_{\ge 0}$ we have
                \begin{align}
                    \tau_{k}(j) < t_{k}(j + 1) \le \tau_{k}(j + 1).
                \end{align} 
            In addition, if $\tau_{k}(j), \tau_{k}(j+1)$ satisfies 
                \begin{align}
                    \tau_{k}(j) < s_{k}(i+1) < \tau_{k}(j+1),
                \end{align}
            for some $i$, then from Proposition \ref{prop:seat_KKR} and Lemma \ref{lem:pm_eq} we obtain $m\left( s_{k}(i+1) \right) = m^{\uparrow}\left( s_{k}(i+1) \right) =  j$, and thus we have
                \begin{align}
                    s_{k}(i+1) < t_{k}(j+1).
                \end{align}
            From the above, we have 
            \[
            \tau_{k}(j) < s_{k}(i+1) < \tau_{k}(j+1) \ \text{if and only if} \ t_{k}(j) < s_{k}(i+1) < t_{k}(j+1).
            \]
            Since $s_k(i) \neq t_k(j)$ and $ s_k(i) \neq \tau_k(j)$ for any $i,j$, combining with Proposition \ref{prop:seat_slot}, we have
                \begin{align}
                    \left| \left\{ j \in \N \ ; \ J_{k,j} = i - k \right\} \right| &= \left| \left\{ j \in \N \ ; \ s_{k}(i) \le t_{k}(j) < s_{k}(i+1) \right\} \right| \\
                    &= \left| \left\{ j \in \N \ ; \ s_{k}(i) \le \tau_{k}(j) < s_{k}(i+1) \right\} \right| \\
                    &= \zeta_{k}(i) \\ &= \tilde{\zeta}_{k}(i).
                \end{align}
            
        \end{proof}
 
        \begin{proof}[Proof of Theorem \ref{thm:slot_finite}]
        From Theorem \ref{thm:2} and Theorem \ref{thm:KOSTY}, we have
            \begin{align}
                T_{\ell} \tilde{\zeta}_{k}(i) &= \left| \left\{ j \in \N \ ; \ T_{\ell}J_{k,j} = i - k \right\} \right| \\
                &= \left| \left\{ j \in \N \ ; \ J_{k,j} + (k \wedge \ell) = i - k \right\} \right| \\
                &= \tilde{\zeta}_{k}\left( i - (k \wedge \ell) \right).
            \end{align}
        \end{proof}
        
\appendix
        
    \section{Takahashi-Satsuma Algorithm}\label{app:TS}
    
        Given a configuration $\eta$, we can decompose it into $k$-solitons, for $k\geq 1$, which are certain substrings of $\eta$ consisting of $k$ ``$1$"s and $k$ ``$0$"s. Such {a} decomposition is produced by the Takahashi-Satsuma algorithm \cite{TS} described below. The procedure consists in iteratively scanning $\eta$ identifying and crossing out $k$-solitons at each iteration. We call a \emph{run} of $\eta$ a maximal substring of consecutive equal letters.

\begin{algorithm}[H]
    	             Start with a configuration $\eta$
    	                \\
                        \While{there are still uncrossed 1's in $\eta$} {Considering only uncrossed elements of $\eta$, select the leftmost run whose length is at least as long as the length (denote it by $k$) of the run preceding it \\ 
                        Identify a soliton of size $k$, or simply $k$-soliton, consisting of the first $k$ letters of this run and the last $k$ letters of the run preceding it
                        \\
                        Cross out these $2k$ letters from $\eta$}
                \end{algorithm}
An example of applying the above algorithm to $\eta = 11001110110001100000\dots$ is shown in Figure \ref{ex:TSforex}. Then we see that in $\eta$, there are one $4$-soliton, two $2$-solitons and one $1$-soliton.
\begin{figure}[H]
                    \centering
                    \begin{equation}
                        \begin{matrix}
                        \eta = & 1 & 1 & 0 & 0 & 1 & 1 & 1 & 0 & 1 & 1 & 0 & 0 & 0 & 1 & 1 & 0 & 0 & 0 & 0 & 0 & \cdots
                        \\
                        & \xc{red}{1} & \xc{red}{1} & \xc{red}{0} & \xc{red}{0} & 1 & 1 & 1 & 0 & 1 & 1 & 0 & 0 & 0 & 1 & 1 & 0 & 0 & 0 & 0 & 0 & \cdots
                        \\
                        & \xc{red}{1} & \xc{red}{1} & \xc{red}{0} & \xc{red}{0} & 1 & 1 & 1 & \xc{blue}{0} & \xc{blue}{1} & 1 & 0 & 0 & 0 & 1 & 1 & 0 & 0 & 0 & 0 & 0 & \cdots
                        \\
                        & \xc{red}{1} & \xc{red}{1} & \xc{red}{0} & \xc{red}{0} & 1 & 1 & 1 & \xc{blue}{0} & \xc{blue}{1} & 1 & 0 & 0 & 0 & \xc{green}{1} & \xc{green}{1} & \xc{green}{0} & \xc{green}{0} & 0 & 0 & 0 & \cdots
                        \\
                        & \xc{red}{1} & \xc{red}{1} & \xc{red}{0} & \xc{red}{0} & \xc{brown}{1} & \xc{brown}{1} & \xc{brown}{1} & \xc{blue}{0} & \xc{blue}{1} & \xc{brown}{1} & \xc{brown}{0} & \xc{brown}{0} & \xc{brown}{0} & \xc{green}{1} & \xc{green}{1} & \xc{green}{0} & \xc{green}{0} & \xc{brown}{0} & 0 & 0 & \cdots
                        \\
                        & \color{red}{1} & \color{red}{1} & \color{red}{0} & \color{red}{0} & \color{brown}{1} & \color{brown}{1} & \color{brown}{1} & \color{blue}{0} & \color{blue}{1} & \color{brown}{1} & \color{brown}{0} & \color{brown}{0} & \color{brown}{0} & \color{green}{1} & \color{green}{1} & \color{green}{0} & \color{green}{0} & \color{brown}{0} & 0 & 0 & \cdots
                        \end{matrix}
                    \end{equation}
                    \caption{Identifying solitons in $\eta$ by the TS Algorithm}\label{ex:TSforex}
                \end{figure}

%%%%%%%%%%%%%%%%%%%%%%%%%%%%%%%%%

\paragraph{Funding statement}
The work of MM has been supported by the European Union’s Horizon 2020 research and innovation programme under the Marie Sk\l odowska-Curie grant agreement No. 101030938.
The work of MS has been supported by JSPS KAKENHI Grant Nos. JP18H03672, JP19H01792, JP22H01143.
The work of TS has been supported by JSPS KAKENHI Grant Nos. JP18H03672, JP19K03665, JP21H04432, JP22H01143.
The work of HS has been supported by JST CREST Grant No. JPMJCR1913 and
JSPS KAKENHI Grant No. JP21K20332.

%%%%%%%%%%%%%%%%%%%%%%%%%%%%%%%%%%%%%%%%%%%

\end{document}